\documentclass[11pt]{amsart}
\usepackage{amsmath,amstext,amsbsy,amssymb,amscd}
\usepackage{stmaryrd}
\usepackage{amsmath,mathrsfs}
\usepackage{amsxtra}
\usepackage{amscd}
\usepackage{amsthm}
\usepackage{amsfonts}
\usepackage{hyperref}
\usepackage{graphicx}%
\usepackage{amssymb}
\usepackage{eucal}
\usepackage{color}
\usepackage{multirow}
\usepackage[enableskew,vcentermath]{youngtab}
\usepackage{rotating}
\hypersetup{colorlinks,linkcolor=blue,urlcolor=cyan,citecolor=blue}

\textwidth=14.9cm \oddsidemargin=1cm \evensidemargin=1cm

\newtheorem{thm}{Theorem}[section]
\theoremstyle{plain}

\newtheorem{lem}[thm]{Lemma}
\newtheorem{prop}[thm]{Proposition}

\theoremstyle{definition}
\newtheorem{defn}[thm]{Definition}

\theoremstyle{remark}
\newtheorem{rem}[thm]{Remark}

\definecolor{A}{rgb}{.75,1,.75}

\numberwithin{equation}{section}

\newcommand{\diag}{\text{diag}\,}

\newcommand{\Fq}{{\mathbb F}_q}

\newcommand{\GL}{GL_n(q)}
\newcommand{\GA}{GA_n(q)}

\newcommand{\C}{\mathbb C}
\newcommand{\Z}{\mathbb Z}
\newcommand{\N}{\mathbb N}

\newcommand{\h}{\mathfrak{H}}

\newcommand{\F}{\mathbb F}

\newcommand{\mP}{\mathscr{P}}

\newcommand{\la}{\lambda}
\newcommand{\bla}{{\boldsymbol{ \lambda}}}
\newcommand{\bmu}{{\boldsymbol{ \mu}}}
\newcommand{\bnu}{{\boldsymbol{ \nu}}}
\newcommand{\bempty}{{\boldsymbol{\emptyset}}}

{\vskip-\lastskip\medskip
  \noindent
  {\em #1.}\enspace
  }%
{\qed\par\medskip
  }

\begin{document}

\title[Affine group over finite fields]{Stability of the centers of group algebras of general affine groups $GA_n(q)$}

\author{Jinkui Wan}\address{School of Mathematics and Statistics\\
  Beijing Institute of Technology\\
  Beijing, 100081, P.R. China}
\email{wjk302@hotmail.com}

\author{Lan Zhou}\address{School of Mathematics and Statistics\\
  Beijing Institute of Technology\\
  Beijing, 100081, P.R. China}
  \email{1120191985@bit.edu.cn}
  
\keywords{Finite fields, general affine groups, centers, conjugacy classes}

\subjclass[2000]{Primary: 20G40, 05E15}

\begin{abstract}
{\color{black} The  general affine group $GA_n(q)$ consisting of invertible affine transformations of an affine space of codimension one  in the vector space $\mathbb{F}_q^n$ over a finite field $\mathbb{F}_q$, can be viewed as a subgroup of the general linear group $GL_{n}(q)$ over $\mathbb{F}_q$. In the article, we introduce the notion of the type of each matrix in $GA_n(q)$ and give an explicit representative for each conjugacy class.  Then the center $\mathcal{A}_n(q)$ of the integral group algebra $\mathbb{Z}[GA_n(q)]$  is proved to be a filtered algebra via the length function defined via the reflections lying in $GA_n(q)$. We show in the associated graded algebras $\mathcal{G}_n(q)$ the structure constants with respect to the basis consisting of the conjugacy class sums  are independent of $n$.   The structure constants in  $\mathcal{G}_n(q)$ is further shown to contain the structure constants in the graded algebras introduced by the first author and Wang for $GL_n(q)$ as special cases.   The stability leads to a universal stable center  $\mathcal{G}(q)$ with positive integer structure constants only depending on $q$ which governs the algebras $\mathcal{G}_n(q)$  for all $n$.  }
\end{abstract}
\maketitle
\setcounter{tocdepth}{1}
 \tableofcontents

\section{Introduction}

\subsection{}

 {\color{black}

In \cite{FH59} by Farahat and Higman, a fundamental stability result for the centers of the integral group algebras $\Z[S_n]$ of the symmetric groups $S_n$ was established. By introducing a conjugation-invariant reflection length for permutations, the center of $\Z[S_n]$ admits  a filtered algebra structure. It is proved in \cite{FH59} that in the associated graded algebras the structure constants with respect to the basis of conjugacy class sums are independent of $n$. This stability leads to the construction of a universal stable ring (the Farahat-Higman ring), equipped with a distinguished basis. Remarkably, this ring can be identified with the ring of symmetric functions but endowed with a new basis, see \cite{Ma95}.

The above stability result has been generalized by Wang \cite{W04} to wreath products $\Gamma\wr S_n$ for any finite group $\Gamma$ and moreover it is shown in \cite{W04} that when the group $\Gamma$ is a finite subgroup of $SL_2(\C)$, the associated graded algebra of the center of the group algebra of the wreath product is isomorphic to the cohomology ring of Hilbert scheme of $n$ points on the minimal resolution of $\C^2/\Gamma$. Recently the first author and Wang \cite{WW19} generalized the above stability property to the general linear group $GL_n(q)$ over a finite field $\mathbb{F}_q$. More precisely, the group $GL_n(q)$ is proved to be  generated by the reflections in $GL_n(q)$ and hence the center of the integral group algebra $\mathbb{Z}[GL_n(q)]$ admits a filtration with respect to the reflection length. Then the structure constants of the associated graded algebras  are shown to be independent of $n$, and this stability leads to a universal stable center with positive integer structure constants which governs the graded algebras for all $n$.

 Based on \cite{WW19}, \"{O}zden established the stability property for the family of symplectic groups $Sp_{2n}(q)$ in \cite{O21} with respect to length function in terms of reflections in $GL_{2n}(q)$.  Later on,  Kannan-Ryba \cite{KR21} proved that the structure constants of the centers of the integral group algebra of the classical finite groups over $\mathbb{F}_q$ are polynomials in $q^n$, recovering the results in \cite{M14} for the case of $GL_n(q)$. 

\subsection{}

Besides the classical finite groups, the general affine groups 
$GA_n(q)$ over $\Fq$ form another rich and sophisticated family of finite groups which has been initially studied by Zelevinsky \cite{Ze81}. The main goal of this paper is to formulate and establish a stability result \`a la Farahat-Higman for the centers of the integral group algebras of $GA_n(q)$.

{\color{black} 
\subsection{}
Denote by $\Phi_q$ the set of monic irreducible polynomials in $\Fq[t]$ other than $t$. It is well known (cf. \cite{Ma95}) that the conjugacy classes of $\GL$ are parametrized by the types  $\bla =(\bla(f))_{f\in\Phi_q}\in \mathscr{P}_n(\Phi_q)$ (which are the partition-valued functions on $\Phi_q$ of degree $n$; cf. \eqref{eq:deg-la}). The general affine group $GA_n(q)$ is the subgroup of $GL_n(q)$ consisting of matrices of the form  $\begin{bmatrix} 1&0\\ \alpha& g\end{bmatrix}$ with $g\in GL_{n-1}(q)$ and $\alpha\in\Fq^{n-1}$. The conjugacy classes of $GA_n(q)$ turns out to be  parametrized by the set of pair $(\bla,k)$, where $\bla\in\mP_{n-1}(\Phi_q)$ and $k\in\mathbb{N}$ satisfying $m_k\geq 1$ if $k\geq 1$, where $\bla(t-1)$ is written as $\bla(t-1)=(1^{m_1}2^{m_2}\cdots) $.  An element $A=\begin{bmatrix} 1&0\\ \alpha& g\end{bmatrix}\in GA_n(q)$ belongs to the conjugacy class corresponding to $(\bla,k)$ with $g\in GL_{n-1}(q)$ of type $\bla \in \mP_{n-1}(\Phi_q)$ and $k\geq 0$ uniquely determined by $A$ and accordingly $A$ is said to be of type $(\bla,k)$.  
Furthermore, we define the {\em modified type } of $A$ to be $(\mathring{\bla},k)$, where $\mathring{\bla}$ is the modified type of $g$ introduced in \cite{WW19}, that is, $\mathring{\bla}(f)=\bla(f)$ for $f\neq t-1$ and $\mathring{\bla}(t-1)=(\bla^e_1-1,\bla^e_2-1,\ldots,\bla^e_r-1)$ if $\bla(t-1)=(\bla^e_1,\bla^e_2,\ldots,\bla^e_r)$. 
he key property is that the modified type remains unchanged for $A$ under the embedding of $GA_{n}(q)$ into $GA_{n+1}(q)$ and it is also clearly conjugation invariant. 
It follows that the conjugacy classes of $GA_\infty(q)=\cup_{n\geq 1} GA_n(q)$ are parametrized by the modified types in $\widehat{\mathscr{P}}^{\mathsf{a}}(\Phi_q)$, see \eqref{eq:mod-type-note} for the notation.  Let $\mathscr{K}_{(\bla, k)} (n)\text{ be } \text{conjugacy class of } GA_{n}(q) \text{ of modified type } (\bla, k)$.
 Denote ${P}_{(\bla, k)}(n)$  the class sum of elements in $ GA_{n}(q)$  of modified type $(\bla, k)$, and write 
$$P_{(\bla, k)}(n) P_{(\bmu, s)}(n)=\sum_{(\bnu,t)} \mathfrak{p}_{(\bla, k),(\bmu, s)}^{(\bnu, t)}(n) P_{(\bnu, t)}(n)  \quad (\text{in }\mathcal{A}_n(q)).$$
}
with $\mathfrak{p}_{(\bla, k),(\bmu, s)}^{(\bnu, t)}(n)$ being the structure constants. 
\subsection{}

It is known that the set of reflections (which are the elements with fixed point subspace in $\Fq^n$ having codimension one) in $\GL$ forms a generating set for $\GL$ and the reflection length $\ell(g)$ of a general element $g\in\GL$ is by definition the length of any reduced word of $g$ in terms of reflections. It turns out that the affine group $GA_n(q)\subset GL_n(q)$ is generated by the set of reflections belonging to the affine group $GA_n(q)$. 
Accordingly, the length $\ell^{\mathsf{a}}(A)$ of a general element $A\in GA_n(q)$ is by definition the length of any reduced word of $A$ in terms of reflections in $GA_n(q)$; two conjugate elements in $GA_n(q)$ have the same  length. It turns out that $\ell^{\mathsf{a}}(A)=\|(\bla, k)\|$ if $A$ is of modified type $(\bla,k)$, see \eqref{eq:length-type} for notations.  The center $\mathcal{A}_n(q)$ of the integral group algebra $\Z[\GA]$ of $\GA$ is a filtered algebra with a basis of conjugacy class sums with respect to the reflection length. 
Denote by $\mathcal{G}_n(q)$ the associated graded algebra. Then we have 
$$P_{(\bla, k)}(n) P_{(\bmu, s)}(n)=\sum_{\|(\boldsymbol{\bnu}, t)\|=\|(\bla, k)\|+\|(\boldsymbol{\bmu}, s)\|} \mathfrak{p}_{(\bla, k),(\bmu, s)}^{(\bnu, t)}(n) P_{(\bnu, t)}(n)  \quad (\text{in }\mathcal{G}_n(q)).$$

Our main result concerning the stability of the structure constants $\mathfrak{p}_{(\bla, k),(\bmu, s)}^{(\bnu, t)}(n)$ is as follows. 
\begin{thm} [Theorem~\ref{thm:indep}]\label{thm:intro} \hfill

$(1)$ $\mathfrak{p}_{(\bla, k),(\bmu, s)}^{(\bnu, t)}(n)=0$ unless $\|(\boldsymbol{\nu}, t)\| \leq\|(\boldsymbol{\lambda}, k)\|+\|(\boldsymbol{\bmu}, s)\|$.

$(2)$ If $\|(\boldsymbol{\bnu}, t)\|=\|(\bla, k)\|+\|(\boldsymbol{\bmu}, s)\|$, then $\mathfrak{p}_{({\bla}, k),(\bmu, s)}^{(\bnu, t)}(n)$ is independent of $n$. {\em (In this case, we shall write $\mathfrak{p}_{({\bla}, k),(\bmu, s)}^{(\bnu, t)}(n)$ as $\mathfrak{p}_{({\bla}, k),(\bmu, s)}^{(\bnu, t)}\in \N$.)}
\end{thm}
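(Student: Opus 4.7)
My plan follows the template of Farahat--Higman \cite{FH59} and its $\GL$ analog by Wan--Wang \cite{WW19}. Part (1) is essentially automatic from the filtered structure. Since the affine reflections generate $\GA$ and $\ell^{\mathsf{a}}$ is defined as a reduced-word length, we have sub-additivity $\ell^{\mathsf{a}}(AB) \le \ell^{\mathsf{a}}(A) + \ell^{\mathsf{a}}(B)$. Combined with the class-function identity $\ell^{\mathsf{a}}(A) = \|(\bla,k)\|$ for $A$ of modified type $(\bla,k)$, any class $(\bnu,t)$ appearing with nonzero coefficient in $P_{(\bla,k)}(n)P_{(\bmu,s)}(n)$ must satisfy $\|(\bnu,t)\| \le \|(\bla,k)\| + \|(\bmu,s)\|$; this proves (1) and exhibits $\mA_n(q)$ as a filtered algebra with respect to $\ell^{\mathsf{a}}$.

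For Part (2), I would fix a representative $C \in \mK_{(\bnu,t)}(n)$ and interpret the structure constant as the cardinality
\[
\mathfrak{p}_{(\bla,k),(\bmu,s)}^{(\bnu,t)}(n) = \#\{(A,B) \in \mK_{(\bla,k)}(n) \times \mK_{(\bmu,s)}(n) : AB = C\}.
\]
Under the top-degree hypothesis, every such factorization $C = AB$ is reduced, meaning $\ell^{\mathsf{a}}(C) = \ell^{\mathsf{a}}(A) + \ell^{\mathsf{a}}(B)$. The crux is then a ``support confinement'' lemma adapted from \cite{WW19}: for any reduced factorization, both $A$ and $B$ act trivially on a common affine complement to a fixed ``essential affine subspace'' of $C$ whose dimension is exactly $\ell^{\mathsf{a}}(C) = \|(\bnu,t)\|$. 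Once this is in place, counting $(A,B)$ reduces to counting reduced factorizations of $C$ inside an affine subspace of the prescribed bounded dimension. Consequently, for all $n$ large enough the embedding $\GA \hookrightarrow GA_{n+1}(q)$ induces a bijection between the factorization sets at levels $n$ and $n+1$, yielding $n$-independence.

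The main technical obstacle is the affine support confinement lemma itself. Two features distinguish it from the linear case of \cite{WW19}: first, an element of $\GA$ need not have any fixed point, so the naive affine fixed locus is inadequate; instead the essential subspace must be defined using both the linear part (governed by the modified type of $g \in GL_{n-1}(q)$) and the translation component $\alpha$ (captured by the parameter $k$). Second, one must verify that multiplying a reduced expression by an affine reflection increases $\ell^{\mathsf{a}}$ by exactly one only when that reflection's fixed affine hyperplane is compatible with the current essential subspaces, and dually that $A$ and $B$ fix a common affine complement. Once a precise codimension-versus-length formula is proved and shown to be additive under reduced products, the $n$-independence in (2) follows by the stability of the modified type under the embedding $\GA \hookrightarrow GA_{n+1}(q)$, exactly as in \cite{FH59,WW19}.
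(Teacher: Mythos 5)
Your part (1) is correct and is the paper's own argument: subadditivity of $\ell^{\mathsf{a}}$ together with the identity $\ell^{\mathsf{a}}(A)=\|(\bla,k)\|$ for $A$ of modified type $(\bla,k)$ (Proposition~\ref{prop:equal-length-2}) immediately gives the vanishing statement.

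Part (2) has a genuine gap: the whole argument rests on an ``affine support confinement lemma'' that you identify as the crux and then explicitly leave unproved. Describing the shape of the needed lemma is not a proof of it, and the obstacles you yourself list are real. An element of modified type $(\bempty,1)$ has no fixed point in the affine hyperplane, so there is no evident ``essential affine subspace'' of dimension $\|(\bnu,t)\|$; and even if you import the linear confinement from Lemma~\ref{lem:length-property}(3) (legitimate, since $\ell^{\mathsf{a}}=\ell$ by Lemma~\ref{lem:equal-length}), you must still show that the count of factorizations with $A,B$ constrained to lie in prescribed $GA_n(q)$-conjugacy classes --- which are strictly finer than the ambient $GL_n(q)$-classes, cf.\ \eqref{eq:two-modified-type} --- stabilizes; that refinement is exactly where the affine difficulty sits and your sketch does not touch it. Finally, ``for all $n$ large enough the embedding induces a bijection'' would at best yield eventual stability, whereas the theorem asserts independence of $n$ for every admissible $n$.

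The paper proves (2) by an entirely different and softer mechanism that you should compare with. Since $\mathscr{K}_{(\bla,k)}(n)\subseteq \mathscr{K}_{\widehat{\bla}_{(k)}}(n)$, the affine structure constant is bounded above by the $GL_n(q)$ structure constant $a^{\widehat{\bnu}_{(t)}}_{\widehat{\bla}_{(k)},\widehat{\bmu}_{(s)}}(n)$ as in \eqref{eq:p-a-compare}, and in the top-degree case the latter is a constant independent of $n$ by \cite[Theorem 3.11]{WW19} (here $\ell^{\mathsf{a}}=\ell$ is used to translate the affine top-degree hypothesis into the linear one). The sequence $n\mapsto\mathfrak{p}^{(\bnu,t)}_{(\bla,k),(\bmu,s)}(n)$ is nondecreasing, and Lemma~\ref{lem:affine-sip} shows that a single strict increase would propagate forever (by inserting the new nonzero block in a fresh coordinate position), contradicting boundedness; hence the sequence is constant from the first admissible $n$ onward. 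This bypasses any support-confinement analysis. To make your proposal into a proof you would have to actually establish and exploit the confinement lemma; as written, the central step is missing.
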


The proof of Theorem~\ref{thm:indep} relies on two key observations. One observation is that the series $GA_1(q)\subset GA_2(q)\subset\cdots\subset GA_{n}(q)\subset GA_{n+1}(q)\subset\cdots$ satisfies the so-called strictly increasing property, that is,  $\mathfrak{p}_{(\boldsymbol{\lambda}, k),(\bmu, s)}^{(\bnu, t)}(n)\leq\mathfrak{p}_{(\boldsymbol{\lambda}, k),(\bmu, s)}^{(\bnu, t)}(n+1)$ for any admissible $n\geq 1$ and moreover  if $\mathfrak{p}_{(\boldsymbol{\lambda}, k),(\bmu, s)}^{(\bnu, t)}(n)<\mathfrak{p}_{(\boldsymbol{\lambda}, k),(\bmu, s)}^{(\bnu, t)}(n+1)$ then $\mathfrak{p}_{(\boldsymbol{\lambda}, k),(\bmu, s)}^{(\bnu, t)}(n+1)<\mathfrak{p}_{(\boldsymbol{\lambda}, k),(\bmu, s)}^{(\bnu, t)}(n+2)$.  Another key observation is that $\ell^a(A)$ coincides with $\ell(A)$ for any $A\in GA_n(q)$ and hence when $\|(\boldsymbol{\bnu}, t)\|=\|(\bla, k)\|+\|(\boldsymbol{\bmu}, s)\|$ the increasing sequence $\cdots\leq \mathfrak{p}_{(\boldsymbol{\lambda}, k),(\bmu, s)}^{(\bnu, t)}(n)\leq\mathfrak{p}_{(\boldsymbol{\lambda}, k),(\bmu, s)}^{(\bnu, t)}(n+1)\leq\cdots$ is bounded by a constant independent of $n$ by applying the stability property established in \cite{WW19}.

Theorem~\ref{thm:indep} can be rephrased as that the associated graded algebra $\mathcal{G}_n(q)$ of $\mathcal{A}_n(q)$ has structure constants independent of $n$. We introduce a graded $\mathbb{Z}$-algebra $\mathcal{G}(q)$ with a basis given by the symbols ${P}_{(\bla,k)}$ indexed by $(\bla,k)\in\widehat{\mathscr{P}}^{\mathsf{a}}(\Phi_q)$, and its multiplication has structure constants $\mathfrak{p}^{(\bnu,t)}_{(\bla,k),(\bmu,s)}$ as in the theorem above, for $\|(\bnu,t)\|= \|(\bla,k)\|+\|(\bmu,s)\|$; cf. \eqref{eq:KKaK}.

\begin{thm} [Theorem~\ref{thm:stable}]
The graded $\mathbb{Z}$-algebra $\mathcal{G}_n(q)$ has the multiplication given by
$${P}_{(\bla, k)}(n) {P}_{(\bmu, s)}(n)=\sum_{\|(\bnu,t)\|= \|(\bla,k)\|+\|(\bmu,s)\|} \mathfrak{p}_{(\bla, k),(\bmu, s)}^{(\bnu, t)} {P}_{(\bnu, t)}(n) .$$
for $(\bla,k),(\bmu,s)\in\widehat{\mathscr{P}}^{\mathsf{a}}(\Phi_q)$. Moreover, we have a surjective homomorphism $\mathcal{G}(q)\twoheadrightarrow\mathcal{G}_n(q)$ for each $n$, which maps ${P}_{(\bla,k)}$ to ${P}_{(\bla,k)}(n)$ for all $(\bla,k)\in\widehat{\mathscr{P}}^{\mathsf{a}}(\Phi_q)$.
\end{thm}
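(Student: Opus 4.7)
The strategy is to deduce both parts directly from Theorem~\ref{thm:intro}. For the first assertion, recall that $\mathcal{G}_n(q)$ is the associated graded algebra of $\mathcal{A}_n(q)$ with respect to the reflection-length filtration, so the product $P_{(\bla,k)}(n)\cdot P_{(\bmu,s)}(n)$ in $\mathcal{G}_n(q)$ is, by definition, the image of the top-degree component of the corresponding product in $\mathcal{A}_n(q)$. By Theorem~\ref{thm:intro}(1), the terms of top degree are precisely those with $\|(\bnu,t)\|=\|(\bla,k)\|+\|(\bmu,s)\|$, and by Theorem~\ref{thm:intro}(2) their coefficients are the $n$-independent integers $\mathfrak{p}^{(\bnu,t)}_{(\bla,k),(\bmu,s)}$; this yields the multiplication formula.

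For the second assertion, define $\phi_n\colon \mathcal{G}(q)\to\mathcal{G}_n(q)$ on the distinguished basis by $\phi_n(P_{(\bla,k)})=P_{(\bla,k)}(n)$ when $(\bla,k)$ indexes a conjugacy class of $GA_n(q)$, and $\phi_n(P_{(\bla,k)})=0$ otherwise, and extend $\mathbb{Z}$-linearly. This $\phi_n$ is grading-preserving by construction and manifestly surjective. To check that $\phi_n$ respects multiplication, compare $\phi_n(P_{(\bla,k)}P_{(\bmu,s)})$ with $\phi_n(P_{(\bla,k)})\phi_n(P_{(\bmu,s)})$. When both $(\bla,k)$ and $(\bmu,s)$ index classes of $GA_n(q)$, the two sides agree by Part~1: any $(\bnu,t)$ of top degree appearing in the expansion in $\mathcal{G}(q)$ is either killed by $\phi_n$ (if it does not index a class of $GA_n(q)$) or contributes the same coefficient $\mathfrak{p}^{(\bnu,t)}_{(\bla,k),(\bmu,s)}$ as in $\mathcal{G}_n(q)$; note that any top-degree class actually realized by $AB$ with $A,B\in GA_n(q)$ is automatically realized in $GA_n(q)$.

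The main obstacle is the remaining case, where say $(\bla,k)$ does not index a class of $GA_n(q)$, so the right-hand side vanishes; we must show $\mathfrak{p}^{(\bnu,t)}_{(\bla,k),(\bmu,s)}=0$ for every $(\bnu,t)$ that does index a class of $GA_n(q)$ with $\|(\bnu,t)\|=\|(\bla,k)\|+\|(\bmu,s)\|$. Suppose, for contradiction, that this structure constant is positive. Taking $N$ large enough to realize all three types, we find $A,B\in GA_N(q)$ of modified types $(\bla,k),(\bmu,s)$ such that $C:=AB$ has type $(\bnu,t)$ and $\ell(C)=\ell(A)+\ell(B)$. Since $(\bnu,t)$ indexes a class of $GA_n(q)$, conjugation by an appropriate element of $GA_N(q)$ lets us further assume $C$ lies in the subgroup $GA_n(q)\subseteq GA_N(q)$, which is precisely the subgroup fixing the last $N-n$ standard basis vectors. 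The key point is that under the reduced-length condition $\ell(C)=\ell(A)+\ell(B)$, a dimension count combined with the elementary inclusion $\mathrm{Ker}(A-I)\cap\mathrm{Ker}(B-I)\subseteq\mathrm{Ker}(C-I)$ forces the equality $\mathrm{Ker}(A-I)\cap\mathrm{Ker}(B-I)=\mathrm{Ker}(C-I)$. Consequently every basis vector $e_j$ with $j>n$, being fixed by $C$, is also fixed by both $A$ and $B$; hence $A,B\in GA_n(q)$, contradicting the non-realizability of $(\bla,k)$ in $GA_n(q)$. This establishes the multiplicativity of $\phi_n$, completing the proof.
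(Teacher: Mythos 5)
Your derivation of the multiplication formula from Theorem~\ref{thm:indep}, and your verification of multiplicativity of $\phi_n$ when both factors index classes realized in $GA_n(q)$, are fine (and agree with what the paper leaves implicit). The gap is in the remaining degenerate case. From $\ker(C-I)=\ker(A-I)\cap\ker(B-I)$ you correctly conclude that $A$ and $B$ fix $e_{n+1},\dots,e_N$, but the inference ``hence $A,B\in GA_n(q)$'' is false: pointwise fixing those vectors only forces the last $N-n$ \emph{columns} of $A$ to be standard, not the last $N-n$ rows, so $A$ only acquires the shape $\left[\begin{smallmatrix}A_{11}&0\\ A_{21}&I_{N-n}\end{smallmatrix}\right]$ with $A_{21}$ unconstrained. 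For instance $\left[\begin{smallmatrix}1&0&0\\0&1&0\\1&0&1\end{smallmatrix}\right]\in GA_3(q)$ fixes $e_2$ and $e_3$ but does not lie in the embedded copy of $GA_1(q)$.

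Moreover, the weaker statement that would actually suffice --- that the conjugacy class of $A$ meets $GA_n(q)$ --- also does not follow merely from $A$ fixing an $(N-n)$-dimensional subspace pointwise. By the paper's realizability criterion, $\mathscr{K}_{(\bla,k)}(n)\neq\emptyset$ requires $n\geq \|\bla\|+\ell(\bla^e)+1$ (and one more when $k=1$), whereas $\dim\ker(A-I)\geq N-n$ only gives $n\geq \ell(A)=\|(\bla,k)\|$; these differ by the number $\ell(\bla^e)$ of nontrivial unipotent Jordan blocks (e.g.\ an element of modified type $((1^r)_{t-1},0)$ has reflection length $r$ but its class is not realized below $GA_{2r+1}(q)$). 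To close the gap you need to control the number of size-one Jordan blocks of $C$ at eigenvalue $1$, which requires the image decomposition $\mathrm{Im}(C-I)=\mathrm{Im}(A-I)\oplus\mathrm{Im}(B-I)$ in addition to the kernel identity: writing $K_X=\ker(X-I)$ and $I_X=\mathrm{Im}(X-I)$, the number of trivial blocks is $\dim(K_X+I_X)-\dim I_X$, and the inclusion $K_C+I_C\subseteq K_A+I_A+I_B$ then yields $m_1(C)\leq m_1(A)$; a further short case analysis is needed to handle the affine parameter (the case $k=0$, $t\geq 1$, where a strict inequality is required). As written, your argument does not establish this, so the multiplicativity of $\phi_n$ is not yet proved.
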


\subsection{}
We compute some examples of the structure constants $\mathfrak{p}_{(\bla, k),(\bmu, s)}^{(\bnu, t)}(n)$ in $\mathcal{A}_n(q)$. Our computation turns out to imply two interesting facts. One fact is that we show the structure constant $a_{\bla\bmu}^{\bnu}$ in the center of $\mathbb{Z}[GL_n(q)]$ in the case $\|\bnu\|=\|\bla\|+\|\boldsymbol{\bmu}\|$ coincides with $\mathfrak{p}_{({\bla}, 0),(\bmu, 0)}^{(\bnu, 0)}$.  This means the structure constants in the graded algebra for $GL_n(q)$ studied in \cite{WW19} are special cases of the structure constants in the graded algebra $\mathcal{G}_n(q)$ for $GA_n(q)$.  

It is known \cite{Ber94, DL18} that in $GA_n(q)$ there exists another set of generators consisting of affine reflections  and moreover the affine reflection length denoted by $\ell\ell^{\mathsf{a}}(A)$ satisfies $\ell\ell^{\mathsf{a}}(A)=\ell\ell^{\mathsf{a}}(B)$ whenever $A,B\in GA_n(q)$ are conjugate. Hence  the center $\mathcal{A}_n(q)$ of the integral group algebra $\Z[\GA]$ of $\GA$ admits another filtered structure with respect to the affine reflection length. It turns out that the structure constants of the corresponding graded algebra $\mathcal{G}'_n(q)$ actually dependents on $n$, which is different from $\mathcal{G}_n(q)$. This difference is an interesting phenomenon which is worthwhile to be further explored to get deeper connection between these two algebras.  It also indicate that the algebra $\mathcal{G}_n(q)$ defined via the length function $\ell^{\mathsf{a}}(A)$ is the suitable one for further study.  
\subsection{}
The paper is organized as follows.
In Section~\ref{sec:GA}, we review basic facts on $GA_n(q)$ and in particularly give an explicit representatives for each conjugacy class of $GA_n(q)$. We introduce the notion of modified types for each element in $GA_n(q)$. 
In Section~\ref{sec:transvection}, we  recall the stability property for $GL_n(q)$ established in \cite{WW19} and prove that the center $\mathcal{A}_n(q)$ of $\mathbb{Z}[GA_n(q)]$ is a filtered algebra. Then we formulate and establish the stability on the structure constants for the graded algebra $\mathcal{G}_n(q)$ and the universal stable center $\mathcal{G}(q)$.
In Section~\ref{sec:affine reflection}, we compute some examples of the structure constants $ \mathfrak{p}_{(\bla, k),(\bmu, s)}^{(\bnu, t)}(n)$ and prove the structure constants $a_{\bla\bmu}^{\bnu}$ in the situation $\|\boldsymbol{\bnu}\|=\|\bla\|+\|\boldsymbol{\bmu}\|$ in the center of $\mathbb{Z}[GL_n(q)]$ coincides with $\mathfrak{p}_{(\bla, 0),(\bmu, 0)}^{(\bnu, 0)}$.

 \vspace{.4cm}
{\bf Acknowledgements.}
 This project was partially carried out while the first author enjoyed the support and hospitality of University of Virginia. She would like to thank Weiqiang Wang and Arun Kannan for many helpful discussions on the project.  Both author are supported by NSFC-12122101 and NSFC-12071026.}

\section{Conjugacy classes and types of matrices in general affine group $GA_n(q)$}\label{sec:GA}
In this section, we shall give an explicit description of the conjugacy classes of the affine group $GA_n(q)$ over the finite field $\Fq$ and introduce the notion of the modified type for  elements in $GA_n(q)$. 
\subsection{The conjugacy classes in $GL_n(q)$}
Let $\mathscr{P}$ be the set of all partitions. For $\la=(\la_1, \la_2, \ldots,)\in\mP$, we denote its size by $|\la|=\la_1+\la_2+\cdots+\la_{\ell}$, its length by $\ell(\la)$. We will also write $\la=(1^{m_1(\la)}2^{m_2(\la)} \ldots)$, where $m_i(\la)$ is the number of parts in $\la$ equal to $i$.
For two partitions $\la,\mu\in\mathscr{P}$, we denote by $\la\cup\mu=(1^{m_1(\la)+m_1(\mu)}2^{m_2(\la)+m_2(\mu)}\cdots)$ the partition whose parts are those of $\la$ and $\mu$. For a set $Y$, let $\mathscr{P}(Y)$ be the set of the partition-valued functions $\boldsymbol{\la}:Y\rightarrow \mP$ such that only finitely many $\bla(y)$ are nonempty partitions. Given $\bla,\bmu\in\mathscr{P}(Y)$, we define $\bla\cup\bmu\in\mP(Y)$ by letting $(\bla\cup\bmu) (y) =\bla(y) \cup\bmu(y)$ for each $y\in Y$.

Denote by $\mathbb{F}_q$ the finite field of $q$ elements, where $q$ is a prime power. We shall regard vectors in the $n$-dimensional vector space $\Fq^n$ as column vectors, that is, $\Fq^n=\{v=(v_1,\ldots,v_n)^\intercal|v_k\in\Fq, 1\leq k\leq n\}$
for each $n\geq 1$. Denote by $M_{n\times m}(q)$ the set of $n\times m$ matrices over the finite field $\mathbb{F}_q$.
The general linear group $GL_n(q)$, which consists of all invertible matrices in $M_{n\times n}(q)$, acts on $\mathbb{F}_q^n$ naturally via left multiplication. 


Following \cite{Ma95}, we recall the description of conjugacy classes in $GL_n(q)$. Let $\Phi_q$ be the set of all monic irreducible polynomial in $\Fq[t]$ other than $t$. Then for each $g\in GL_n(q)$, the vector space $\Fq^n$ admits a $\Fq[t]$-module via $t\cdot v=gv$ for any $v\in\Fq^n$, Since $\Fq[t]$ is a PID,  there exists a unique $\bla =(\bla(f))_{f\in\Phi_q} \in\mathscr{P}(\Phi_q)$ such that
\begin{equation}
   \label{eq:Vg}
V_g\cong V_\bla:=\oplus_{f,i}\Fq[t]/(f)^{\bla_i(f)},
\end{equation}
where we write $\bla(f)=(\bla_1(f),\bla_2(f),\ldots)\in\mP$; moreover, we have
\begin{equation}
    \label{eq:deg-la}
\|\bla\|:=\sum_{f\in\Phi_q}d(f)|\bla(f)|=n,
\end{equation}
where $d(f)$ denotes the degree of the polynomial $f$. Denote by $\mP_n(\Phi_q)$ the set of $\bla\in\mP(\Phi_q)$ satisfying \eqref{eq:deg-la}.
The partition-valued function $\bla=(\bla(f))_{f\in\Phi_q}\in\mP(\Phi_q)$ is called the {\em type} of $g$. Then any two elements of $GL_n(q)$ are conjugate if and only if they have the same type, and there is a bijection between the set of conjugacy classes of $GL_n(q)$ and the set $\mP_n(\Phi_q)$. 
For each $\bla\in\mP_n(\Phi_q)$, denote by $\mathcal{K}_\bla$ the conjugacy class of elements in $G_n$ of type $\bla$.

For each $f=t^d-\sum_{1\leq i\leq d}a_it^{i-1}\in\Phi_q$, let $J(f)$ denote the {\em companion matrix} for $f$ of the form
\[
J(f)=
  \begin{bmatrix}
   0 & 1& 0 & \cdots & 0\\
   0 & 0& 1 & \cdots & 0\\
   \vdots & \vdots& \vdots & \vdots & \vdots\\
   0 & 0& 0 & \cdots & 1\\
   a_1 & a_2& a_3 & \cdots & a_d
  \end{bmatrix},
\]
and for each integer $m\geq 1$ let
\[
J_m(f)=\begin{bmatrix}
   J(f) & I_d& 0 & \cdots & 0&0\\
   0 & J(f)& I_d & \cdots & 0&0\\
   \vdots & \vdots& \vdots & \vdots & \vdots\\
   0 & 0& 0 & \cdots & J(f) & I_d\\
   0 & 0& 0 & \cdots & 0& J(f)
  \end{bmatrix}_{dm\times dm}
\]
with $m$ diagonal blocks $J(f)$, where $I_d$ is the $d\times d$ identity matrix.
Given $\bla\in \mP(\Phi_q)$ with $\bla(f)=(\bla_1(f),\bla_2(f),\ldots)$,
set
\begin{equation}\label{eq:Jbla}
J_\bla=\diag\Big(J_{\bla_i(f)}(f)\Big)_{f,i},
\end{equation}
that is, $J_\bla$ is the diagonal sum of the matrices $J_{\bla_i(f)}(f)$ for all $i\geq 1$ and $f\in\Phi_q$.
Then an element $g\in GL_n(q)$ of type $\bla$ is conjugate to the canonical form $J_\bla$ (cf. \cite[Chapter IV, \S 2]{Ma95}).
For $f\in \Phi_q$, set
\[
J_\bla(f)=\diag\Big(J_{\bla_i(f)}(f)\Big)_{i\geq1}.
\]
Then by \eqref{eq:Vg}, we have
$
V_{J_\bla(f)}\cong \oplus_{i\geq 1}\Fq[t]/(f)^{\bla_i(f)}
$
as $\Fq[t]$-modules and moreover, we have $
J_\bla=\diag\big(J_\bla(f)\big)_{f\in\Phi_q}.$
Set 
$$
J_\bla^{-}=\diag\big(J_{\bla(f)}\big)_{f\neq t-1}.
$$
Suppose $\bla(t-1)=(1^{m_1}2^{m_2}\cdots r^{m_r})$ and set
$$
E^{(k)}_{\bla}=\begin{bmatrix}
   J_k(t-1) & 0 & 0 & \cdots & 0&0\\
   0 & J_k(t-1)& 0 & \cdots & 0&0\\
   \vdots & \vdots& \vdots & \vdots & \vdots\\
   0 & 0& 0 & \cdots & J_k(t-1) & 0\\
   0 & 0& 0 & \cdots & 0& J_k(t-1)
  \end{bmatrix}_{km_k\times km_k}
$$
for $1\leq k\leq r$,
then we can write
\begin{equation}\label{eq:Jbla-separate form}
J_{\bla}=\begin{bmatrix}
   E^{(1)}_{\bla} & 0 & 0 & \cdots & 0&0\\
   0 & E^{(2)}_{\bla}& 0 & \cdots & 0&0\\
   \vdots & \vdots& \vdots & \vdots & \vdots\\
   0 & 0& 0 & \cdots & E^{(r)}_{\bla} & 0\\
   0 & 0& 0 & \cdots & 0& J_\bla^-
  \end{bmatrix}_{n\times n}.
\end{equation}

By \cite[Lemma 2.1]{WW19} (cf. \cite[IV, (2.5)]{Ma95}), we have the following.

\begin{lem}\label{lem:comm-Jf1f2}
Let $\bla,\bmu\in\mP(\Phi_q)$ and $f_1\neq f_2\in\Phi_q$.
Suppose $A$ is a $d(f_2)|\bmu(f_2)|\times d(f_1)|\bla(f_1)|$-matrix over $\Fq$
satisfying
$
A J_\bla(f_1)=J_\bmu(f_2)A.
$
Then $A=0$.
\end{lem}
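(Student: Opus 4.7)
The plan is to exploit the fact that $J_\bla(f_1)$ is annihilated by a power of $f_1$, while $J_\bmu(f_2)$ is annihilated by a power of $f_2$, together with the coprimality of the distinct monic irreducibles $f_1,f_2\in\Phi_q$.

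First I would iterate the intertwining relation. The identity $A J_\bla(f_1)=J_\bmu(f_2)A$ immediately gives $A\,p(J_\bla(f_1))=p(J_\bmu(f_2))\,A$ for every $p(t)\in\Fq[t]$, by induction on $\deg p$ and linearity. Next I would identify the natural annihilators. From the block description $J_\bla(f_1)=\diag\bigl(J_{\bla_i(f_1)}(f_1)\bigr)_{i\geq 1}$, each block $J_{\bla_i(f_1)}(f_1)$ has minimal polynomial $f_1^{\bla_i(f_1)}$ on $\Fq[t]/(f_1)^{\bla_i(f_1)}$, so choosing $N\geq \max_i \bla_i(f_1)$ we get $f_1(J_\bla(f_1))^N=0$. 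Symmetrically, $f_2(J_\bmu(f_2))^M=0$ for $M$ large.

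Feeding $p=f_1^N$ into the intertwining identity yields $f_1(J_\bmu(f_2))^N\,A=A\,f_1(J_\bla(f_1))^N=0$. The final step is to show that the operator $f_1(J_\bmu(f_2))^N$ acting on the target space is invertible. Since $f_1$ and $f_2$ are distinct monic irreducibles over $\Fq$, they are coprime, so $\gcd(f_1^N,f_2^M)=1$ in $\Fq[t]$. Bezout gives polynomials $u,v\in\Fq[t]$ with $u\,f_1^N+v\,f_2^M=1$. Evaluating at $J_\bmu(f_2)$ and using $f_2(J_\bmu(f_2))^M=0$, we obtain $u(J_\bmu(f_2))\,f_1(J_\bmu(f_2))^N=I$. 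Multiplying $f_1(J_\bmu(f_2))^N A=0$ on the left by $u(J_\bmu(f_2))$ then forces $A=0$.

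The argument is essentially a Chinese remainder / primary decomposition argument for $\Fq[t]$-modules, and there is no real obstacle: the only thing to be careful about is extracting large enough exponents $N,M$ so that $f_1^N$ (resp.\ $f_2^M$) annihilates the whole direct sum $J_\bla(f_1)$ (resp.\ $J_\bmu(f_2)$), which is immediate from the block-diagonal form. Everything else is Bezout applied to coprime monic polynomials.
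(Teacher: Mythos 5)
Your argument is correct and complete. The paper does not prove this lemma itself (it is quoted from \cite[Lemma 2.1]{WW19}, cf.\ \cite[IV, (2.5)]{Ma95}), but your coprimality/Bezout argument is precisely the standard primary-decomposition proof behind those citations: the intertwiner $A$ is an $\Fq[t]$-module map from the $f_1$-primary module $V_{J_\bla(f_1)}$ to the $f_2$-primary module $V_{J_\bmu(f_2)}$, and your choice of $N$ with $f_1(J_\bla(f_1))^N=0$ together with the inverse $u(J_\bmu(f_2))$ of $f_1(J_\bmu(f_2))^N$ forces $A=0$ exactly as required.
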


Recall $J_m(t-1)$ is the Jordan form of size $m$ and eigenvalue $1$. The following elementary lemma can be verified by a direct computation.

\begin{lem}{\cite[Lemma 2.4]{WW19}}
  \label{lem:comm-Jkm}
Let $k,m\geq 1$. Suppose $A\in M_{m\times k}(q)$ satisfies $AJ_k(t-1) =J_m(t-1)A$. Then $A$
is of the form
\begin{equation}\label{eq:comm-form1}
A=\begin{bmatrix}
 0      & \cdots  & 0      & a_1     & a_2        &\cdots     &  a_{m-1}             & a_m                 \\
 0      & \cdots  & 0      & 0       & a_1       &\cdots      & a_{m-2}              &a_{m-1}           \\
 \vdots &         & \vdots & \vdots  &            & \ddots    & \vdots               & \vdots             \\
 0      & \cdots  & 0      & 0       &  0         &\cdots     & a_1                  & a_2       \\
 0      & \cdots  & 0      & 0       & 0          &\cdots     & 0                    & a_1
\end{bmatrix} \quad \text{if } m\leq k,
\end{equation}
or
\begin{equation}\label{eq:comm-form2}
 A=\begin{bmatrix}
  a_1     & a_2      &    \cdots       & a_{k-1}           &a_k                \\
   0      &a_1       & \cdots          & a_{k-2}          & a_{k-1}            \\
          &          &  \ddots         &                   &                    \\
   0      &    0     &                 &  a_1              &     a_2       \\
   0      &     0    &   \cdots        &     0             &   a_1    \\
   0      &    0     &   \cdots        &   0               &  0                  \\
 \vdots   &\vdots    &                 &\vdots             &\vdots               \\
  0       &    0     &    \cdots       &    0              &  0
\end{bmatrix} \quad \text{if } m\geq k,
\end{equation}
for some scalars $a_1,\ldots,a_{\min(k,m)}\in\Fq$.
\end{lem}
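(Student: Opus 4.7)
The plan is to reduce the commutation relation to one involving only the nilpotent parts of the Jordan blocks, then read off the conclusion by an entrywise analysis. First I would write $J_k(t-1)=I_k+N_k$ and $J_m(t-1)=I_m+N_m$, where $N_\ell$ denotes the standard $\ell\times\ell$ nilpotent Jordan block with $1$'s on the superdiagonal and $0$'s elsewhere. Substituting into $AJ_k(t-1)=J_m(t-1)A$ and cancelling the $A$ terms coming from the identity matrices, the hypothesis becomes the single equation $AN_k=N_mA$.

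Next, I would translate this into entrywise relations by writing $A=(a_{i,j})_{1\le i\le m,\, 1\le j\le k}$. Right multiplication by $N_k$ shifts the columns of $A$ one step to the right, sending the first column to zero, while left multiplication by $N_m$ shifts the rows of $A$ one step up, sending the last row to zero. Comparing entries on both sides of $AN_k=N_mA$ yields the three families of conditions
\begin{itemize}
\item[(i)] $a_{i,1}=0$ for $2\le i\le m$,
\item[(ii)] $a_{m,j}=0$ for $1\le j\le k-1$,
\item[(iii)] $a_{i,j-1}=a_{i+1,j}$ for $1\le i\le m-1$ and $2\le j\le k$.
\end{itemize}
Condition (iii) expresses that $A$ is constant along each diagonal running from upper-left to lower-right, so $a_{i,j}=\varphi(j-i)$ for some function $\varphi$ on $\{1-m,\ldots,k-1\}$.

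The final step is bookkeeping. Condition (i) forces $\varphi(c)=0$ for $c\le -1$, and condition (ii) forces $\varphi(c)=0$ for $c\le k-m-1$. If $m\le k$, then $k-m-1\ge -1$, so (ii) subsumes (i) and the free values of $\varphi$ live on $\{k-m,k-m+1,\ldots,k-1\}$, a set of cardinality $m$; labelling these values as $a_m,a_{m-1},\ldots,a_1$ (in that order, so that $\varphi(k-1)=a_m$ in the top-right corner) recovers the form \eqref{eq:comm-form1}. If instead $m\ge k$, then (i) subsumes (ii) and the free values of $\varphi$ live on $\{0,1,\ldots,k-1\}$, a set of cardinality $k$; labelling them $a_1,\ldots,a_k$ so that $\varphi(0)=a_1$ sits on the main diagonal of the top $k\times k$ block recovers the form \eqref{eq:comm-form2}.

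I do not anticipate any real obstacle: the whole argument is a direct unwinding of the commutation relation, and the only care needed is in matching the indexing conventions of $a_1,\ldots,a_{\min(k,m)}$ with the displayed matrices in the two cases $m\le k$ and $m\ge k$.
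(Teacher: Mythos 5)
Your proof is correct and is exactly the direct computation the paper alludes to (the paper states the lemma "can be verified by a direct computation" and cites \cite[Lemma 2.4]{WW19} without writing it out): reducing to $AN_k=N_mA$, reading off the shift relations, and concluding constancy along diagonals with the appropriate vanishing is the intended argument. The only nitpick is the wording of the labelling in the case $m\le k$ ("$a_m,a_{m-1},\ldots,a_1$ in that order"), which is momentarily ambiguous but resolved correctly by your parenthetical $\varphi(k-1)=a_m$.
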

{\color{black} The following lemma will be useful to compute the conjugacy classes in $GA_n(q)$.
\begin{lem}\label{lem:comm-Z}
Suppose $\bla\in \mP_n(\Phi_q)$ with $\bla(t-1)=(1^{m_1}2^{m_2}\cdots r^{m_r})$. Then a matrix $Z\in M_{n\times n}(\mathbb{F}_q)$ commutes with $J_{\bla}$
if and only if $Z$ has the form
\begin{equation}\label{eq:comm-Z}
Z=\begin{bmatrix}
Z_{11}&Z_{12}&\cdots&Z_{1r}&0\\
Z_{21}&Z_{22}&\cdots&Z_{2r}&0\\
\vdots&\vdots&\vdots&\vdots&\vdots\\
Z_{r1}&Z_{r2}&\cdots&Z_{rr}&0\\
0&0&\cdots&0&Z^-,
\end{bmatrix}
\end{equation}
where each
$Z_{ij}=\begin{bmatrix}
Z^{(11)}_{ij}&Z^{(12)}_{ij}&\cdots&Z^{(1m_j)}_{ij}\\
Z^{(21)}_{ij}&Z^{(22)}_{ij}&\cdots&Z^{(2m_j)}_{ij}\\
\vdots&\vdots&\vdots&\vdots\\
Z^{(m_i1)}_{ij}&Z^{(m_i2)}_{ij}&\cdots&Z^{(m_im_j)}_{ij}
\end{bmatrix}$
is an $m_i\times m_j$-block matrix and each block matrix $Z_{ij}^{(kl)}$ is of size $i\times j$ with the form \eqref{eq:comm-form1} if $i\leq j$ and with the form \eqref{eq:comm-form2} if $i\geq j$ and moreover $Z^-J_{\bla^-}=J_{\bla^-}Z^-$. 
\end{lem}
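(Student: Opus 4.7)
The plan is to exploit the block-diagonal decomposition of $J_{\bla}$ given in \eqref{eq:Jbla-separate form} and reduce the commutation relation $ZJ_\bla=J_\bla Z$ to a collection of block-wise intertwining equations, each already settled by Lemma~\ref{lem:comm-Jf1f2} or Lemma~\ref{lem:comm-Jkm}.

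First I would partition $Z\in M_{n\times n}(\Fq)$ into an $(r+1)\times(r+1)$ array of blocks conforming to the sizes of $E^{(1)}_\bla,\ldots,E^{(r)}_\bla,J_\bla^-$. The relation $ZJ_\bla=J_\bla Z$ then decouples into block equations. For the off-diagonal blocks linking an $E^{(i)}_\bla$-strip to the $J_\bla^-$-strip, after further decomposing $J_\bla^-$ according to each irreducible factor $f\neq t-1$, the relevant equations are exactly of the shape considered in Lemma~\ref{lem:comm-Jf1f2} with $f_1=t-1\neq f_2=f$. That lemma forces each such rectangular block to vanish, producing the zeros along the right-most column and bottom row of \eqref{eq:comm-Z}.

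Next I would turn to the $(t-1)$-part. Because $E^{(i)}_\bla$ is itself block-diagonal with $m_i$ copies of $J_i(t-1)$, I would further partition the $(i,j)$ block $Z_{ij}$ into an $m_i\times m_j$ array of sub-blocks $Z_{ij}^{(kl)}$ of size $i\times j$. The block equation $Z_{ij}E^{(j)}_\bla=E^{(i)}_\bla Z_{ij}$ restricted to the $(k,l)$ position reads $Z_{ij}^{(kl)}J_j(t-1)=J_i(t-1)Z_{ij}^{(kl)}$, independently for each pair $(k,l)$, and Lemma~\ref{lem:comm-Jkm} forces exactly the form \eqref{eq:comm-form1} when $i\leq j$ and \eqref{eq:comm-form2} when $i\geq j$. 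The remaining diagonal block $Z^-$ is characterized by $Z^-J_{\bla^-}=J_{\bla^-}Z^-$, which is the only condition stated for it.

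There is no serious obstacle in this argument: once the block decomposition is in place, everything reduces to one of the two preceding lemmas. The only care required is bookkeeping — matching block sizes across strips, and observing that the sub-block equations $Z_{ij}^{(kl)}J_j(t-1)=J_i(t-1)Z_{ij}^{(kl)}$ really are uncoupled, which follows because $E^{(i)}_\bla$ acts on each internal copy of $\Fq^i$ through a single Jordan block. The converse direction, that any $Z$ of the prescribed shape commutes with $J_\bla$, reduces to a direct block-by-block verification using the same forms and requires no additional ideas.
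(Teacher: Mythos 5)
Your proposal is correct and follows essentially the same route as the paper's own (much terser) proof: Lemma~\ref{lem:comm-Jf1f2} forces the vanishing of the blocks coupling the $(t-1)$-part to $J_\bla^-$, and the remaining block equations $Z_{ij}E^{(j)}_\bla=E^{(i)}_\bla Z_{ij}$ decouple into sub-block equations settled by Lemma~\ref{lem:comm-Jkm}. Your write-up is in fact more careful about the bookkeeping (and gets the order of the intertwining relation right) than the published argument.
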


\begin{proof}
Suppose $Z\in M_n(\mathbb{F}_q)$. Clearly by Lemma \ref{lem:comm-Jf1f2},  one can obtain that $Z$ commuting with $J_{\bla}$ must be of the form
\eqref{eq:comm-Z} and moreover $Z_{ij}E^{(i)}_{\bla}=E^{(j)}_{\bla}Z_{ij}$ for $1\leq i,j\leq r$. Then by \eqref{eq:comm-form1} and  \eqref{eq:comm-form2}, the lemma is proved.  

\end{proof}
}

\subsection{The general affine Group $GA_n(q)$} We will review some basics on affine spaces and general affine group by following \cite{DL18}. Let $V$ be a finite dimensional vector space over $\mathbb{F}_q$. An affine space $\widetilde{V}$ associated to $V$ is defined to be a set $\widetilde{V}$ together with a unique $V$-action $V\times \widetilde{V}\rightarrow \widetilde{V}, (\alpha, x)\mapsto \alpha+x$ satisfies
\begin{align}
{\bf 0}+x=x, \beta+(\alpha+x)=(\beta+\alpha)+x,
\end{align}
for any $x\in\widetilde{V},\alpha,\beta\in V$ and moreover for any $x,y\in\widetilde{V}$, there exists a unique vector $\alpha\in V$ such that $\alpha+x=y$. For $x,y\in\widetilde{V}$, as there exists a unique vector $\alpha\in V$ such that $\alpha+x=y$,  we can define the difference $y-x=\alpha$ which is a vector in $V$. However, one cannot take sums of elements of $\widetilde{V}$. We define $\dim \widetilde{V}=\dim V$. A map $f:\widetilde{V}\rightarrow \widetilde{V}$ is said to be an affine transformation if
\begin{equation}\label{eq:affine map}
f(y_1)-f(x_1)=f(y_2)-f(x_2)
\end{equation}
 for any $x_1,x_2,y_1,y_2\in\widetilde{V}$ satisfying $y_1-x_1=y_2-x_2\in V$ and moreover the induced map $f^0:V\rightarrow V,\alpha\mapsto f(\alpha+x)-f(x)$ is linear, where $x\in\widetilde{V}$ is a fixed point.  We observe that the map $f^0$ is well-defined due to \eqref{eq:affine map}. Clearly the composition of two affine transformations is still an affine transformation. An affine transformation $f$ is called invertible if it is bijective and it is straightforward to check that the inverse of an invertible affine transformation is still an affine transformation. The (general) affine group $GA(\widetilde{V})$ of $\widetilde{V}$ is defined to be group consisting of all invertible affine transformations of $\widetilde{V}$.

In an affine space, it is possible to fix a point and coordinate axis such that every point in the space can be represented as a $n$-tuple of its coordinates. For example,
suppose $\dim\widetilde{V}=\dim V=n-1$.  By choosing coordinates for $\widetilde{V}$, one can identify $\widetilde{V}$ with the affine hyperplane
\begin{equation}\label{eq:affine sp}
\widetilde{\mathbb{F}}_q^{n-1}:=\{(1,v_1,v_2,\ldots,v_{n-1})^\intercal|v_1,\ldots,v_{n-1}\in\mathbb{F}_q\}
\end{equation}
in $\mathbb{F}_q^{n}$ and $V$ is identified with the linear subspace $\{(0,v_1,v_2,\ldots,v_{n-1})^\intercal|v_1,\ldots,v_{n-1}\in\mathbb{F}_q\}$. Then  general affine group $GA(\widetilde{V})$ can be identified with the set of $n\times n$-matrices:
\begin{equation}\label{eq:affine gp}
GA(\widetilde{V})\cong GA_n(q):= \bigg\{\begin{bmatrix}
   1 & 0\\
   \alpha & g
  \end{bmatrix}\bigg|g\in GL_{n-1}(q),\alpha\in\Fq^{n-1}\bigg\}.
\end{equation}

\subsection{The conjugacy classes in $GA_n(q)$}
{\color{black}It is known \cite{Ze81} that the number of conjugacy classes of $GA_n(q)$ is equal to $c_0+c_1+\cdots+c_{n-1}$ with $c_i$ being the number of conjugacy classes of $GL_i(q)$ and a representative of conjugacy classes is also given in \cite{Mu00} via a ``maximal problem'' procedure in the context of maximal parabolic subgroups. In the following, we shall give an explicit description of the type of each matrix in $GA_n(q)$ as well as an explicit representative for each conjugacy class.}

Recall the general affine group $GA_n(q)$ is the subgroup of $GL_{n}(q)$ given by
\begin{equation}\label{eq:affine}
GA_n(q)= \bigg\{\begin{bmatrix}
   1 & 0\\
   \alpha & g
  \end{bmatrix}\bigg|g\in GL_{n-1}(q),\alpha\in\Fq^{n-1}\bigg\}.
\end{equation}
Clearly the natural embedding $GL_n(q)\subset GL_{n+1}(q), h\mapsto \begin{bmatrix}h&0\\ 0&1 \end{bmatrix}$ for $n\geq 0$ leads to the following embedding and hence: 
\begin{equation}\label{eq:embed-aff} 
GA_1(q)\subset GA_2(q)\subset\cdots \subset GA_n(q)\subset GA_{n+1}(q)\subset \cdots. 
\end{equation}
Denote by $GA_{\infty}(q)=\cup_{n\geq 1}GA_n(q)$ the corresponding limit group.  For simplicity, write $\text{Im}(I_{n-1}-g)=\{(I_{n-1}-g)\beta|\beta\in \Fq^{n-1}\}$ for each $g\in GL_{n-1}(q)$.  The following formula will be useful later for our computation: 
\begin{equation}\label{eq:BAB-1}
  BAB^{-1}= \begin{bmatrix}
   1 & 0\\
   (I_n-hgh^{-1})\beta+h\alpha & hgh^{-1}
  \end{bmatrix}
\end{equation}
for any $A=\begin{bmatrix}
   1 & 0\\
   \alpha & g
  \end{bmatrix}, B=\begin{bmatrix}
   1 & 0\\
   \beta & h
  \end{bmatrix}\in GA_n(q)$.

\begin{lem}\label{lem:conjugate-aff}
Let  $A=\begin{bmatrix}
   1 & 0\\
   \alpha & g
  \end{bmatrix}\in GA_n(q)$.  Suppose $g\in GL_{n-1}(q)$ is of the type $\bla$ with $\bla(t-1)=(1^{m_1}2^{m_2}\cdots r^{m_r})$. Assume $\alpha\notin \text{Im}(I_{n-1}-g)$. Then $A$ is conjugate in $GA_n(q)$ to a matrix  of the form
\begin{equation}\label{eq:A-conj form 1}
J_{(\bla,k)}:=\begin{bmatrix}
  1&    0        & 0 & 0 & \cdots & 0 & 0 &0\\
  0&   E^{(1)}_{\bla} & 0 & 0 & \cdots & 0 & 0 & 0\\
  0&0 & E^{(2)}_{\bla}& 0 & \cdots & 0 & 0 & 0\\
   \vdots&\vdots & \vdots& \vdots & \vdots & \vdots & \vdots\\
   e_k& 0 & 0& \cdots & E^{(k)}_{\bla}& \cdots & 0 & 0 \\
   \vdots&\vdots & \vdots& \vdots & \vdots & \vdots&\vdots\\
   0&0 & 0& 0 & \cdots & 0 & E^{(r)}_{\bla} & 0\\
  0& 0 & 0& 0 & \cdots & 0& 0 &J_\bla^-
  \end{bmatrix},
\end{equation}
where $e_k=(0,0,\ldots, 0,1)^\intercal\in \F_q^{km_k}$ in  $GA_n(q)$ for some $1\leq k\leq r$ with $m_k\geq 1$.
\end{lem}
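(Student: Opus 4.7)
The strategy is to normalize $A$ in two stages. First, I would conjugate by $B_0 = \begin{bmatrix}1&0\\0&h_0\end{bmatrix} \in GA_n(q)$ with $h_0 \in GL_{n-1}(q)$ chosen so that $h_0 g h_0^{-1} = J_\bla$. By \eqref{eq:BAB-1} this replaces $\alpha$ with $h_0\alpha$, and since conjugation sends $\text{Im}(I_{n-1} - g)$ bijectively onto $\text{Im}(I_{n-1} - J_\bla)$, the hypothesis survives: $h_0\alpha \notin \text{Im}(I_{n-1} - J_\bla)$. So I may assume $g = J_\bla$. It then remains to classify such $\alpha$ modulo the equivalence $\alpha \sim h\alpha + (I_{n-1} - J_\bla)\beta$ with $h$ in the centralizer $C(J_\bla) \subset GL_{n-1}(q)$ and $\beta \in \Fq^{n-1}$, since by \eqref{eq:BAB-1} this is exactly $GA_n(q)$-conjugation preserving the block $J_\bla$.

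Second, I would pass to the cokernel $W := \Fq^{n-1}/\text{Im}(I_{n-1} - J_\bla)$, on which the translations by $(I-J_\bla)\beta$ act trivially. Using the decomposition \eqref{eq:Jbla-separate form}, the $J_\bla^-$ summand contributes nothing to $W$ because $I - J_\bla^-$ is invertible ($1$ is not an eigenvalue), while each summand $E^{(i)}_\bla$ (comprising $m_i$ copies of $J_i(t-1)$) contributes $W_i \cong \Fq^{m_i}$, with basis given by the classes of the ``last vectors'' $e_i$ in each of the $m_i$ subblocks. Thus $W = \bigoplus_{i=1}^r W_i$, and the hypothesis $\alpha \notin \text{Im}(I - J_\bla)$ says $\bar\alpha \neq 0$ in $W$.

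The crux of the argument, and the main obstacle, is to describe the induced action $\bar h$ of $C(J_\bla)$ on $W$. Applying Lemma \ref{lem:comm-Z} together with the explicit forms \eqref{eq:comm-form1}--\eqref{eq:comm-form2}, a direct computation shows that a block $Z^{(kl)}_{ij}$ of $h$ applied to $e_j$ produces a vector of the $E^{(i)}_\bla$ block whose image in $W_i$ is $a_1$ times the $k$-th basis vector when $i \leq j$, and is $0$ when $i > j$ (in the latter case the nonzero entries of $Z^{(kl)}_{ij} e_j$ sit only in the first $j < i$ positions, hence in $\text{Im}(I - E^{(i)}_\bla)$). Consequently $\bar h$ is block upper triangular on $W = \bigoplus_i W_i$, with diagonal blocks in $GL_{m_i}(\Fq)$ and arbitrary off-diagonal $(i,j)$-blocks in $M_{m_i \times m_j}(\Fq)$ for $i < j$; conversely, every such block upper triangular transformation is realized by choosing each $Z^{(kl)}_{ij}$ of \eqref{eq:comm-form1}-form with prescribed leading entry $a_1$ and other coefficients zero.

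Given this structure, the orbit classification on $W$ is straightforward. Let $k$ be the largest index with $\bar\alpha_k \neq 0$; then automatically $m_k \geq 1$. I first apply an element of $GL_{m_k}(\Fq)$ to the $k$-th diagonal block to send $\bar\alpha_k$ to $\epsilon_{m_k}$, the basis vector corresponding to the last subblock of $E^{(k)}_\bla$. Then, for each $i < k$, I choose the off-diagonal block $\bar h_{ik}$ whose $m_k$-th column equals $-\bar\alpha_i$ (and whose other columns vanish); this kills $\bar\alpha_i$ while leaving $\bar\alpha_k$ untouched. The resulting class in $W$ is concentrated at $\epsilon_{m_k} \in W_k$, and subtracting a suitable $(I - J_\bla)\beta$ lifts this back to exactly the vector $e_k = (0,\dots,0,1)^\intercal$ in the $E^{(k)}_\bla$ block with all other blocks zero, producing the canonical form $J_{(\bla,k)}$.
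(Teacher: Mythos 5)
Your proof is correct and follows essentially the same route as the paper's: reduce to $g = J_\bla$, then use the translations $(I_{n-1}-J_\bla)\beta$ together with the centralizer of $J_\bla$ (Lemma~\ref{lem:comm-Z}) to normalize $\alpha$, with $k$ being the largest Jordan-block size whose component of $\alpha$ survives modulo $\mathrm{Im}(I_{n-1}-J_\bla)$. The paper carries this out by writing down an explicit $\beta$ and an explicit $X\in C(J_\bla)$ in coordinates, whereas you repackage the identical computation as an orbit problem for the induced block-upper-triangular action of $C(J_\bla)$ on the cokernel $W=\Fq^{n-1}/\mathrm{Im}(I_{n-1}-J_\bla)$ --- a cleaner formulation of the same argument.
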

\begin{proof}
{\color{black}
Since $g$ is of type $\bla$, there exists $h\in GL_{n-1}(q)$ such that $hgh^{-1}=J_\bla$ and then
\begin{equation}\label{eq: In-hgh-1}
I_{n-1}-hgh^{-1}=I_{n-1}-J_\bla=\begin{bmatrix}
   I_{m_1}-E_\bla^{(1)} & 0 & 0 & \cdots & 0&0\\
   0 & I_{2m_2}-E_\bla^{(2)}& 0 & \cdots & 0&0\\
   \vdots & \vdots& \vdots & \vdots & \vdots\\
   0 & 0& 0 & \cdots & I_{rm_r}-E_\bla^{(r)} & 0\\
   0 & 0& 0 & \cdots & 0& I_m-J_\bla^-
  \end{bmatrix}
\end{equation}
by \eqref{eq:Jbla-separate form}, where $m=n-1-|\bla(t-1)|$. Write $h\alpha=\gamma=\begin{bmatrix}\gamma_1& \gamma_2& \cdots& \gamma_r& \gamma^{-}\end{bmatrix}^\intercal\in\mathbb{F}_q^{n-1}$
with $\gamma_i=\begin{bmatrix}\gamma_i^{(1)}& \gamma_i^{(2)}&\cdots&\gamma_i^{(m_i)}\end{bmatrix}$ and $\gamma_i^{(j)}=\begin{bmatrix} c_{i1}^{(j)}&c_{i2}^{(j)}&\cdots&c_{ii}^{(j)}\end{bmatrix}\in\mathbb{F}_q^{i}$ for $1\leq i\leq r, 1\leq j\leq m_i$.
Take $\beta=\begin{bmatrix} \beta_1 &\beta_2&\cdots&\beta_r\ &\beta^-\end{bmatrix}^\intercal $ with 
$$
\beta_i=\begin{bmatrix}\beta_i^{(1)} &\beta_i^{(2)}&\cdots& \beta_i^{(m_i)} \end{bmatrix},\quad \beta_i^{(j)}=\begin{bmatrix} 0&c^{(j)}_{i1}& \cdots&c^{(j)}_{ii-1}\end{bmatrix}
$$
for $1\leq i\leq r, 1\leq j\leq m_i$ and  $\beta^-$ satisfies 
$(\beta^-)^\intercal=-(I_m-J_{\bla}^-)^{-1}(\gamma^{-})^\intercal$ as $I_m-J_\bla^-$ is invertible. Then by \eqref{eq: In-hgh-1}, we obtain that
\begin{equation}\label{eq:beta-alpha}
(I_n-hgh^{-1})\beta+h\alpha=\begin{bmatrix}
   \rho_1\\
   \rho_2\\
   \vdots\\
   \rho_r\\
   \rho^-
  \end{bmatrix},
\end{equation}
with
\begin{equation}\label{eq:rho}
\rho_1=\gamma_1, \quad \rho_i=\begin{bmatrix}\rho_i^{(1)}&\rho_i^{(2)}&\cdots&\rho_i^{(m_i)}\end{bmatrix}^\intercal\text{ with }\rho_i^{(j)}=\begin{bmatrix}0&0&\cdots&c_{ii}^{(j)} \end{bmatrix}\in\mathbb{F}_q^{i}
\end{equation}
for $2\leq i\leq r,1\leq j\leq m_i$ and $\rho^-=\begin{bmatrix}0&0&\cdots&0\end{bmatrix}\in\mathbb{F}_q^{m}$.  Set $B=\begin{bmatrix}
   1 & 0\\
   \beta & h
  \end{bmatrix}\in GA_n(q)$, then by \eqref{eq:BAB-1} and \eqref{eq:beta-alpha} we have 
\begin{equation}\label{eq:A-conj form 2}
BAB^{-1}=
  \begin{bmatrix}
  1&    0        & 0 & 0 & \cdots & 0 & 0  \\
  \rho_1&   E^{(1)}_{\bla} & 0 & 0 & \cdots & 0 & 0 \\
   \rho_2&0 & E^{(2)}_{\bla}& 0 & \cdots & 0 & 0\\
   \vdots&\vdots & \vdots& \vdots & \vdots & \vdots&\vdots\\
   \rho_r&0 & 0& 0 & \cdots & E^{(r)}_{\bla} & 0\\
  0& 0 & 0& 0 & \cdots & 0& J_\bla^-
  \end{bmatrix}. 
\end{equation}
By assumption $\alpha\notin\text{Im}(I_{n-1}-g)$ we obtain that there does not exist $\beta\in\mathbb{F}_n^{n-1}$ such that $\alpha=(I_{n-1}-g)\beta$. Thus by \eqref{eq:beta-alpha} there must exist some $1\leq i\leq r$ such that $\rho_i\neq 0$. 
Now assume 
\begin{equation}\label{eq:max-k}
\max\{i| \rho_i\neq 0\}=k.
\end{equation}  We claim the matrix in \eqref{eq:A-conj form 2} is conjugate
to the matrix of the form
\begin{equation}\label{eq:A-conj form 3}
J_{(\bla,k)}=\begin{bmatrix}
  1&    0        & 0 & \cdots & 0 & 0 & \cdots &0&0\\
  0&   E^{(1)}_{\bla} & 0&\cdots & 0 &  0 & \cdots & 0&0\\
  0&0 & E^{(2)}_{\bla}& \cdots & 0 & 0 & \cdots & 0&0\\
   \vdots&\vdots & \vdots& \vdots & \vdots & \vdots & \vdots&\vdots&\vdots\\
   0&0&0&\cdots&E^{(k-1)}_{\bla}&0&\cdots&0&0\\
   e_k& 0 &0& \cdots& 0 & E^{(k)}_{\bla}& \cdots & 0 & 0 \\
   \vdots&\vdots & \vdots& \vdots & \vdots & \vdots&\vdots&\vdots&\vdots\\
   0&0 & 0& \cdots &0&0& 0 & E^{(r)}_{\bla} & 0\\
  0& 0 & 0& \cdots &0& 0 & 0& 0 &J_\bla^-
  \end{bmatrix},
\end{equation}
where $e_k=(0,0,\ldots, 0,1)^\intercal\in \F_q^{km_k}$.
To prove the claim, 
as $\rho_k\neq 0$ is of the form \eqref{eq:rho}, let $j_0=max\left\{j \mid c_{k k}^{(j)} \neq 0,1 \leq j \leq m_k\right\}$. Then set 
$$
\hat{X}_{k k}=\begin{bmatrix}
\hat{X}^{(11)}_{kk}&\hat{X}^{(12)}_{kk}&\cdots&\hat{X}^{(1m_k)}_{kk}\\
\hat{X}^{(21)}_{kk}&\hat{X}^{(22)}_{kk}&\cdots&\hat{X}^{(2m_k)}_{kk}\\
\vdots&\vdots&\vdots&\vdots\\
\hat{X}^{(m_k1)}_{kk}&\hat{X}^{(m_k2)}_{kk}&\cdots&\hat{X}^{(m_km_k)}_{kk}
\end{bmatrix}
$$
with 
$$
\hat{X}_{k k}^{(s t)}=\left\{\begin{array}{cc}
\operatorname{diag}\left(\frac{1}{c_{k k}^{(j_0)}}\right)_{k \times k}, & s=t=j_0, \\
I_{k \times k}, & s=t\neq j_0 \\
\operatorname{diag}\left(-\frac{c^{(s)}_{kk}}{c_{k k}^{(j_0)}}\right)_{k \times k}, &1\leq s < j_0, t=j_0, \\
0, & \text{otherwise}. 
\end{array}\right.
$$
Clearly $\hat{X}_{k k} \rho_k=\begin{bmatrix} \widetilde{\rho}^{(1)}_k& \widetilde{\rho}^{(2)}_k&\cdots&\widetilde{\rho}^{(j_0)}_k&\cdots&\widetilde{\rho}^{(m_k)}_k \end{bmatrix}$ with $\widetilde{\rho}_k^{(j_0)}=\begin{bmatrix}0&0&\cdots&0&1\end{bmatrix}\in\Fq^k$ and $\widetilde{\rho}_k^{(j)}=\begin{bmatrix}0&0&\cdots&0&0\end{bmatrix}\in\Fq^k$ for $j\neq j_0$. Let $$
X_{kk}=\begin{bmatrix}
I_{k}&0&\cdots&0&\cdots&0\\
0&I_{k}&\cdots&0&\cdots&0\\
\vdots&\vdots&\vdots&\vdots&\vdots&0\\
0&0&\cdots&0&\cdots&I_{k}\\
\vdots&\vdots&\vdots&\vdots&\vdots&\vdots\\
0&0&\cdots&I_{k}&\cdots&0
\end{bmatrix}\cdot \hat{X}_{kk}
$$
Then $X_{kk}$ is an invertible $m_k\times m_k$-block matrix  with each block $X^{(st)}_{kk}$ being of the form \eqref{eq:comm-form1} and  
\begin{equation}\label{eq:Xkk}
X_{kk}\rho_k=e_k.
\end{equation}
Furthermore, choose 
\begin{equation}\label{eq:Xii}
X_{ii}=I_{im_i}\text{ for } 1\leq i\leq r, i\neq k. 
\end{equation}
In addition, we take 
\begin{equation}\label{eq:Xij-1}
X_{ij}=0\text{ for } i>j \text{ or } i< j \text{ but } j\neq k . 
\end{equation}
Then for each $1\leq i\neq k\leq r$, we choose $X_{ik}$ 
 with 
$X_{ik}=\begin{bmatrix}
X^{(11)}_{ik}&X^{(12)}_{ik}&\cdots&X^{(1m_k)}_{ik}\\
X^{(21)}_{ik}&X^{(22)}_{ik}&\cdots&X^{(2m_k)}_{ik}\\
\vdots&\vdots&\vdots&\vdots\\
X^{(m_i1)}_{ik}&X^{(m_i2)}_{ik}&\cdots&X^{(m_im_k)}_{ik}
\end{bmatrix}$
being an $m_i\times m_k$-block matrix and each block matrix $X_{ik}^{(ab)}$ is of size $i\times k$ with the form \eqref{eq:comm-form1} if $i\leq k$ and with the form \eqref{eq:comm-form2} if $i\geq k$, where 
\begin{equation}\label{eq:Xij-2}
 X_{ik}^{(au)}=\begin{bmatrix}
 0      & \cdots  & 0      &  -\frac{c_{ii}^{(a)}}{c_{kk}^{(u)}}     & 0        &\cdots     &  0             & 0                \\
 0      & \cdots  & 0      & 0       &  -\frac{c_{ii}^{(a)}}{c_{kk}^{(u)}}       &\cdots      & 0              &0           \\
 \vdots &         & \vdots & \vdots  &            & \ddots    & \vdots               & \vdots             \\
 0      & \cdots  & 0      & 0       &  0         &\cdots     &  -\frac{c_{ii}^{(a)}}{c_{kk}^{(u)}}                  & 0       \\
 0      & \cdots  & 0      & 0       & 0          &\cdots     & 0                    & -\frac{c_{ii}^{(a)}}{c_{kk}^{(u)}}
\end{bmatrix}_{i\times k},\text{ and } X_{ik}^{ab}=0 \text{ for } b\neq u
\end{equation} 
for $1\leq a\leq m_i, 1\leq b\leq m_k$. Clearly by \eqref{eq:Xii} and \eqref{eq:Xij-2} we have 
\begin{equation}\label{eq:Xij-3}
X_{ii}\rho_i+X_{ik}\rho_k=0. 
\end{equation}
Finally choose an invertible $X^-\in GL_m(q)$ such that $X^-J_{\bla}^-=J_{\bla}^-X^-$. 
Putting together we obtain a matrix $X$ is of the form 
\begin{equation*}
X=\begin{bmatrix}
I_{m_1} & 0&0&\cdots&0& X_{1k}&0&\cdots&0&0\\
0& I_{2m_2}&0&\cdots&0& X_{2k}&0&\cdots&0&0\\
\vdots&\vdots&\vdots&\cdots&\vdots&\vdots&\vdots&\cdots&\vdots&\vdots\\
0& 0&0&\cdots&0& X_{k-1,k}&0&\cdots&0&0\\
0& 0&0&\cdots&0& X_{kk}&0&\cdots&0&0\\
0& 0&0&\cdots&0& 0&I_{(k+1)m_{k+1}}&\cdots&0&0\\
\vdots&\vdots&\vdots&\cdots&\vdots&\vdots&\vdots&\cdots&\vdots&\vdots\\
0& 0&0&\cdots&0& 0&0&\cdots&I_{rm_r}&0\\
0& 0&0&\cdots&0& 0&0&\cdots&0&X^-
\end{bmatrix}
\end{equation*}
Then by \eqref{eq:Xkk}-\eqref{eq:Xij-3} we obtain
\begin{align*}
X\begin{bmatrix}\rho_1\\ \rho_2 \\ \vdots\\ \rho_r\\ 0\\ \vdots\\ 0 \end{bmatrix}&=\begin{bmatrix}
I_{m_1} & 0&0&\cdots&0& X_{1k}&0&\cdots&0&0\\
0& I_{2m_2}&0&\cdots&0& X_{2k}&0&\cdots&0&0\\
\vdots&\vdots&\vdots&\cdots&\vdots&\vdots&\vdots&\cdots&\vdots&\vdots\\
0& 0&0&\cdots&0& X_{k-1,k}&0&\cdots&0&0\\
0& 0&0&\cdots&0& X_{kk}&0&\cdots&0&0\\
0& 0&0&\cdots&0& 0&I_{(k+1)m_{k+1}}&\cdots&0&0\\
\vdots&\vdots&\vdots&\cdots&\vdots&\vdots&\vdots&\cdots&\vdots&\vdots\\
0& 0&0&\cdots&0& 0&0&\cdots&I_{rm_r}&0\\
0& 0&0&\cdots&0& 0&0&\cdots&0&X^-
\end{bmatrix} \begin{bmatrix}\rho_1\\ \rho_2 \\ \vdots\\ \rho_{k-1}\\ \rho_k\\ 0\\ \vdots\\ 0\\ 0 \end{bmatrix}\\
&=\begin{bmatrix}0\\ 0 \\ \vdots\\ 0\\ e_k\\ 0\\ \vdots\\ \vdots\\ 0 \end{bmatrix}
\end{align*}
and moreover 
$
XJ_{\bla}X^{-1}=J_{\bla} 
$ by Lemma \ref{lem:comm-Z}. 
This together with \eqref{eq:A-conj form 2} leads to 
$$
\begin{bmatrix}1&0\\ 0& X \end{bmatrix} BAB^{-1}\begin{bmatrix}1&0\\ 0& X^{-1} \end{bmatrix}=J_{(\bla,k)}. 
$$
This proves the claim and hence the lemma follows. 
}
\end{proof}
{\color{black}Set $$\mathscr{P}^{\mathsf{a}}_{n}(\Phi_q)=\big\{(\bla,k)\in\mathscr{P}_{n-1}(\Phi_q)\times\mathbb{Z}_{\geq0}|~k \text{ is a part of }\bla(t-1)\text{ whenever }k\geq 1\big\}.$$
For each $(\bla,k)\in \mathscr{P}^{\mathsf{a}}_{n}(\Phi_q)$, recall 
$$
J_{(\bla,k)}=\begin{bmatrix}
  1&    0        & 0 & 0 & \cdots & 0 & 0 &0\\
  0&   E^{(1)}_{\bla} & 0 & 0 & \cdots & 0 & 0 & 0\\
  0&0 & E^{(2)}_{\bla}& 0 & \cdots & 0 & 0 & 0\\
   \vdots&\vdots & \vdots& \vdots & \vdots & \vdots & \vdots\\
   e_k& 0 & 0& \cdots & E^{(k)}_{\bla}& \cdots & 0 & 0 \\
   \vdots&\vdots & \vdots& \vdots & \vdots & \vdots&\vdots\\
   0&0 & 0& 0 & \cdots & 0 & E^{(r)}_{\bla} & 0\\
  0& 0 & 0& 0 & \cdots & 0& 0 &J_\bla^-
  \end{bmatrix},
  $$
 if $k\geq 1$ and while in the case $k\geq 0$ set 
  $$
 J_{(\bla,0)}:= \begin{bmatrix}
   1&    0        & 0 & 0 & \cdots & 0 & 0 &0\\
  0&   E^{(1)}_{\bla} & 0 & 0 & \cdots & 0 & 0 & 0\\
  0&0 & E^{(2)}_{\bla}& 0 & \cdots & 0 & 0 & 0\\
   \vdots&\vdots & \vdots& \vdots & \vdots & \vdots & \vdots\\
   0& 0 & 0& \cdots & E^{(k)}_{\bla}& \cdots & 0 & 0 \\
   \vdots&\vdots & \vdots& \vdots & \vdots & \vdots&\vdots\\
   0&0 & 0& 0 & \cdots & 0 & E^{(r)}_{\bla} & 0\\
  0& 0 & 0& 0 & \cdots & 0& 0 &J_\bla^-
  \end{bmatrix}. 
$$

For each $(\bla,k)\in\mathscr{P}^{\mathsf{a}}_n(\Phi_q)$, set $\widetilde{\bla}_{(k)}\in\mathscr{P}_n(\Phi_q)$ via
\begin{equation}\label{eq:tilde-bla-1}
\widetilde{\bla}_{(k)} (f)= \begin{cases}\bla(f), & f \neq t-1, \\ 
\left(1^{m_1} \cdots k^{m_k-1} (k+1)^{m_{k+1}+1} \cdots r^{m_r}\right), & f=t-1\end{cases}
\end{equation}
in the case $k\geq 1$ and 
\begin{equation}\label{eq:tilde-bla-2}
\widetilde{\bla}_{(0)} (f)= \begin{cases}\bla(f), & f \neq t-1, \\ 
\left(1^{m_1+1}2^{m_2} \cdots \cdots r^{m_r}\right), & f=t-1.\end{cases}
\end{equation}
Clearly for each $(\bla,k)\in\mathscr{P}^{\mathsf{a}}_n(\Phi_q)$, the type of $J_{(\bla,k)}$ in $GL_n(q)$ is exactly $\widetilde{\bla}_{(k)}$. 
\begin{prop}\label{prop:conjugate-aff}
Every element in $GA_n(q)$ is  conjugate to $J_{(\bla,k)}$ for some $(\bla,k)\in\mathscr{P}^{\mathsf{a}}_n(\Phi_q)$. 
Moreover $\big\{J_{(\bla,k)}\big|(\bla,k)\in\mathscr{P}^{\mathsf{a}}_n(\Phi_q)\big\}$ is a complete set of representatives in conjugacy classes in $GA_n(q)$. 
 \end{prop}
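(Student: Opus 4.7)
The plan is to prove existence (every matrix in $GA_n(q)$ is conjugate to some $J_{(\bla,k)}$ with $(\bla,k)\in\mathscr{P}^{\mathsf{a}}_n(\Phi_q)$) and then to obtain distinctness of these classes by a cardinality match against Zelevinsky's count.

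For existence, given $A=\begin{bmatrix}1&0\\ \alpha&g\end{bmatrix}\in GA_n(q)$ with $g$ of type $\bla\in\mathscr{P}_{n-1}(\Phi_q)$, I would split on whether $\alpha\in\text{Im}(I_{n-1}-g)$. The case $\alpha\notin\text{Im}(I_{n-1}-g)$ is handled verbatim by Lemma~\ref{lem:conjugate-aff}, which produces a conjugate of the form $J_{(\bla,k)}$ with $1\leq k\leq r$ and $m_k\geq 1$; in particular $k$ is a part of $\bla(t-1)$, so $(\bla,k)\in\mathscr{P}^{\mathsf{a}}_n(\Phi_q)$. In the complementary case, write $\alpha=(I_{n-1}-g)\beta_0$ and pick $h\in GL_{n-1}(q)$ with $hgh^{-1}=J_\bla$; applying \eqref{eq:BAB-1} to $B=\begin{bmatrix}1&0\\ -h\beta_0&h\end{bmatrix}\in GA_n(q)$ gives $BAB^{-1}=\begin{bmatrix}1&0\\ 0&J_\bla\end{bmatrix}=J_{(\bla,0)}$, the crucial identity being $h(I_{n-1}-g)=(I_{n-1}-J_\bla)h$.

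For distinctness, I would match cardinalities rather than pursue invariants. The introduction recalls from \cite{Ze81} that $GA_n(q)$ has $c_0+c_1+\cdots+c_{n-1}$ conjugacy classes, where $c_i=|\mathscr{P}_i(\Phi_q)|$. The required equality $|\mathscr{P}^{\mathsf{a}}_n(\Phi_q)|=\sum_{m=0}^{n-1}c_m$ follows from the explicit bijection sending $\bmu\in\mathscr{P}_m(\Phi_q)$ to the pair $(\bla,k)$ with $k=n-1-m$, $\bla(f)=\bmu(f)$ for $f\neq t-1$, and $\bla(t-1)=\bmu(t-1)\cup\{k\}$ when $k\geq 1$ (with $\bla=\bmu$ when $k=0$); the inverse removes a $k$-part from $\bla(t-1)$, a legal operation precisely by the defining constraint of $\mathscr{P}^{\mathsf{a}}_n(\Phi_q)$. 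Combined with the surjection from the existence step, this collapses $(\bla,k)\mapsto [J_{(\bla,k)}]$ to a bijection onto conjugacy classes.

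The most delicate point is the new case $\alpha\in\text{Im}(I_{n-1}-g)$ in the existence step, which Lemma~\ref{lem:conjugate-aff} explicitly excludes; but it reduces to one invocation of \eqref{eq:BAB-1} with a carefully chosen $\beta$. If one wished to avoid appeal to Zelevinsky's class count, a direct distinctness argument would proceed as follows: conjugation in $GA_n(q)$ preserves the $GL_{n-1}(q)$-conjugacy class of $g$, hence $\bla$; and by Lemma~\ref{lem:comm-Z} the centralizer of $J_\bla$ acts on the cokernel $\Fq^{n-1}/\text{Im}(I_{n-1}-J_\bla)\cong\bigoplus_i W_i$ preserving the filtration $\bigoplus_{i\leq k}W_i$ (since the off-diagonal blocks $Z_{ij}$ with $i>j$ land in $\text{Im}(I_{n-1}-J_\bla)$), so $k$ is recovered as the largest $i$ for which $\bar\alpha$ has nonzero projection onto $W_i$. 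The counting route, however, is considerably cleaner.
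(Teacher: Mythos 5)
Your proof is correct, and the existence half is essentially the paper's: the same dichotomy on whether $\alpha\in\text{Im}(I_{n-1}-g)$, with Lemma~\ref{lem:conjugate-aff} covering one case and an explicit conjugation via \eqref{eq:BAB-1} covering the other (you merge into one step, using $h(I_{n-1}-g)=(I_{n-1}-J_\bla)h$, what the paper does in two, but the computation is the same). Where you genuinely diverge is the distinctness half. The paper argues by invariants and stays self-contained: if $J_{(\bla,k)}\thicksim_{\mathsf{a}}J_{(\bmu,l)}$ then their $GL_n(q)$-types agree, so $\widetilde{\bla}_{(k)}=\widetilde{\bmu}_{(l)}$, while \eqref{eq:BAB-1} forces $\bla=\bmu$; since for fixed $\bla$ the partitions $\widetilde{\bla}_{(k)}(t-1)$ are pairwise distinct as $k$ varies, $k=l$ follows. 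You instead import Zelevinsky's count of $c_0+\cdots+c_{n-1}$ conjugacy classes and match it against $|\mathscr{P}^{\mathsf{a}}_n(\Phi_q)|$ via the (correct) bijection that adds or removes a $k$-part of $\bla(t-1)$; combined with surjectivity from the existence step this does force distinctness. The trade-off is that the counting route is shorter but makes the proposition logically dependent on \cite{Ze81}, whereas the paper's version independently rederives the class count (and the invariant $\widetilde{\bla}_{(k)}$ is reused later, e.g.\ in \eqref{eq:two-modified-type}). Your sketched alternative via the centralizer action on the cokernel $\Fq^{n-1}/\text{Im}(I_{n-1}-J_\bla)$ is a more geometric version of the paper's invariant argument and is essentially sound, though it silently uses that the induced diagonal maps on each graded piece $W_i$ are invertible; the paper's choice of the $GL_n(q)$-type as the invariant sidesteps that verification.
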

\begin{proof}
Fix $A=\begin{bmatrix}1&0\\ \alpha& g\end{bmatrix}\in GA_n(q)$. If there does not exist $\beta\in\mathbb{F}_q^{n-1}$ such that $(I_{n-1}-g)\beta=\alpha$, then Lemma \ref{lem:conjugate-aff} implies that $A$ is conjugate to $J_{(\bla,k)}$ for some $(\bla,k)\in\mathscr{P}^{\mathsf{a}}_n(\Phi_q)$ with $\bla$ being the type of $g\in GL_{n-1}(q)$ and $k\geq 1$ being a part of the partition $\bla(t-1)$. Otherwise if there exists $\beta\in\mathbb{F}_q^{n-1}$ such that $(I_{n-1}-g)\beta=\alpha$,  then 
 $$\left[\begin{array}{cc}1 & 0 \\ -\beta & I_{n-1}\end{array}\right]\left[\begin{array}{ll}1 & 0 \\ \alpha & g\end{array}\right]\left[\begin{array}{ll}1 & 0 \\ \beta & I_{n-1}\end{array}\right]=\left[\begin{array}{ll}1 & 0 \\ 0 & g\end{array}\right].$$ This means $A$ is conjugate to $\left[\begin{array}{ll}1 & 0 \\ 0 & g\end{array}\right]$ in $GA_{n}(q)$ and hence  $A$ is conjugate to $J_{(\bla,0)}$ with $\bla$ being the type of $g$.  This proves the first statement in the proposition. 
 
Regarding the second statement, since  the matrix $J_{(\bla,k)}$ has type $\widetilde{\bla}_{(k)}\in\mathcal{P}_n(\Phi_q)$ as an element in $GL_n(q)$.  Hence if $J_{(\bla,k)}$ is conjugate to $J_{(\bmu,l)}$ in $GA_n(q)$ then  $\widetilde{\bla}_{(k)}=\widetilde{\bmu}_{(l)}$ and moreover by \eqref{eq:BAB-1} we have $\bla=\bmu$. This implies $k=l$ and hence  $ (\bla,k)=(\bmu,l)$. Thus the proposition is proved. 
 \end{proof}

Recall the notion of modified type of an element $g\in GL_{n-1}(q)$ in \cite{WW19}. More precisely, suppoe $\bla$ is the type of $g\in GL_{n-1}(q)$. Denote by $\bla^e =\bla(t-1)=(\bla^e_1,\bla^e_2,\ldots,\bla^e_r)$ the partition of the unipotent Jordan blocks, and denote by $r=\ell(\bla^e)$ its length. Define $\mathring{\bla}$ to be the modified type of $g\in GL_{n-1}(q)$, that is, $(\mathring{\bla},k)\in\mP_{n-r}(\Phi_q)\times\mathbb{Z}_{\geq 0}$ satisifes $\mathring{\bla}(f)=\bla(f)$ for $f\neq t-1$ and $\mathring{\bla}(t-1)=(\bla^e_1-1,\bla^e_2-1,\ldots,\bla^e_r-1)$. Meanwhile, given $\bmu \in\mathscr{P}(\Phi_q)=\cup_{n\geq 0}\mathscr{P}_n(\Phi_q)$ with $r=\ell(\bmu^e)$ and $\bmu^e=(\bmu^e_1,\bmu^e_2,\ldots,\bmu^e_r)$,
we define $\bmu^{\uparrow n-1} \in \mathscr{P}_{n-1}(\Phi_q)$ for all $n-1\geq \|\bmu\|+r$
via
\begin{align}
\label{eq:obmu}
\bmu^{\uparrow n-1}(f)&=\bmu(f), \; {\rm for }\, f\neq t-1,
\\
  \label{eq:bmuarr}
  \bmu^{\uparrow n-1}(t-1)&=(\bmu^{\uparrow n-1})^{e} =(\bmu^e_1+1,\bmu^e_2+1,\ldots,\bmu^e_r+1,\underbrace{1,\ldots,1}_{n-1-r-\|\bmu\|}).
\end{align}
Clearly elements of type $\bmu^{\uparrow n-1}$ in $GL_{n-1}(q)$ have a modified type $\bmu$. Denote by  $\mathscr{K}_{\bla}(n)$ the conjugacy class in $GL_n(q)$ which consists of elements of modified type  $\bla$. Then $\mathscr{K}_{\bla}(n)$ is nonempty if and only if $\|\bla\|+\ell(\bla(t-1))\leq n$ and accordingly denote by $K_\bla(n)$ the conjugacy class sum. 

Inspired by \cite{WW19}, we can also analogously introduce the notion of modified type for each $A\in GA_n(q)$. If an element $A=\begin{bmatrix}1&0 \\ \alpha&g \end{bmatrix} \in GA_n(q)$ is conjugate to $J_{(\bla,k)}$, we say that $A$ has a type $(\bla,k)$. Define the modified type of $A$ to be $(\mathring{\bla},k)$, where $\mathring{\bla}$ is the modified type of $g\in GL_{n-1}(q)$ defined above in \cite{WW19}. Observe that the modified type remains unchanged for $A$ under the embedding of $GA_n(q)$ into $GA_{n+1}(q)$ in \eqref{eq:embed-aff} and it is also clearly conjugation invariant by \eqref{eq:BAB-1}. The following is immediate.
\begin{lem}\label{lem:modifed-inv}
Two elements in $GA_\infty=\cup_{n\geq 1}GA_n(q) $ are conjugate if and only if they have the same modified type.
\end{lem}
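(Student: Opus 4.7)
The plan is to deduce the lemma from Proposition~\ref{prop:conjugate-aff} together with the two observations recorded immediately before the statement: the modified type is conjugation-invariant inside each $GA_n(q)$, and it is preserved under the chain of embeddings $GA_n(q) \hookrightarrow GA_{n+1}(q)$ in \eqref{eq:embed-aff}. Given these, the classification in Proposition~\ref{prop:conjugate-aff} by the type $(\bla,k) \in \mathscr{P}^{\mathsf{a}}_n(\Phi_q)$ and the deterministic passage between type and modified type for a fixed $n$ (via $\bla = \mathring{\bla}^{\uparrow n-1}$, cf.~\eqref{eq:obmu}--\eqref{eq:bmuarr}) will do the rest.

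For the forward direction I would take $A \in GA_n(q)$ and $B \in GA_m(q)$ conjugate in $GA_\infty(q)$ via some $C \in GA_N(q)$ with $N \geq \max(n,m)$, and embed $A$ and $B$ into $GA_N(q)$; by stability their modified types are unchanged. Since $A$ and $B$ are then conjugate inside $GA_N(q)$, Proposition~\ref{prop:conjugate-aff} forces them to share the same type in $\mathscr{P}^{\mathsf{a}}_N(\Phi_q)$, hence the same modified type.

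For the reverse direction, assume $A \in GA_n(q)$ and $B \in GA_m(q)$ carry the same modified type $(\mathring{\bla}, k)$. Set $N = \max(n,m)$ and embed both into $GA_N(q)$; stability preserves their modified types. Since the type in $GA_N(q)$ is uniquely recovered from $(\mathring{\bla},k)$ as $(\mathring{\bla}^{\uparrow N-1},k)$, the two elements have the same type in $GA_N(q)$, and a second invocation of Proposition~\ref{prop:conjugate-aff} produces a conjugator in $GA_N(q) \subset GA_\infty(q)$.

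I expect no real obstacle: the content is formal, and the only nonroutine input is the stability of the modified type under $GA_n(q) \hookrightarrow GA_{n+1}(q)$. That in turn is immediate from the fact that this embedding sends the linear part $g$ to $g \oplus 1$, appending a single part of size $1$ to $\bla(t-1)$; subtracting $1$ from every part of $\bla(t-1)$ turns this extra part into a zero (and hence a nonpart), while leaving $k$ and all $\bla(f)$ with $f \neq t-1$ untouched. This is why the lemma can rightly be flagged as ``immediate''.
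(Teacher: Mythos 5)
Your argument is correct and is exactly the route the paper intends: it states the lemma as ``immediate'' from the two observations you cite (conjugation-invariance of the modified type via \eqref{eq:BAB-1} and its stability under the embeddings \eqref{eq:embed-aff}), combined with the classification of conjugacy classes in each $GA_N(q)$ by types from Proposition~\ref{prop:conjugate-aff} and the bijective passage between type and modified type at a fixed level via $\bla = \mathring{\bla}^{\uparrow N-1}$. Your write-up simply makes explicit what the paper leaves unsaid, and no step is missing.
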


 Let 
 \begin{equation}\label{eq:mod-type-note}
 \widehat{\mathscr{P}}^{\mathsf{a}}(\Phi_q)=\{(\bla,k)\in\mathscr{P}(\Phi_q)\times\mathbb{Z}_{\geq0}| k-1\text{ is a part of } \bla(t-1) \text{ whenever }k\geq 1\}. 
 \end{equation} Then by Lemma \ref{lem:modifed-inv} the conjugacy classes of $GA_{\infty}(q)=\cup_{n\geq 1}GA_n(q)$ are parametrized by $\widehat{\mathscr{P}}^{\mathsf{a}}(\Phi_q)$. 
Given $(\bmu,l)\in\widehat{\mathscr{P}}^{\mathsf{a}}(\Phi_q)$, we denote by $\mathscr{K}_{(\bmu,l)}$ the conjugacy class in $GA_\infty$ which consists of elements of modified type  $(\bmu,l)$. For each $(\bmu,l)\in\widehat{\mathscr{P}}^{\mathsf{a}}(\Phi_q)$, $\mathscr{K}_{(\bmu,l)}(n):= GA_n(q)\cap\mathscr{K}_{(\bmu,l)}$ if nonempty is a conjugacy class of $GA_n$ of type $(\bmu^{\uparrow n-1},l)$. Hence $\mathscr{K}_{(\bmu,l)}(n)$ is nonempty if and only if $\|\bmu\|+\ell(\bmu^e)\leq n-1$ if $l\neq 1$ and $\|\bmu\|+\ell(\bmu^e)\leq n-2$ if $l=1$. Let ${P}_{(\bmu,l)}(n)$ be the class sum of $\mathscr{K}_{(\bmu,l)}(n)$ if $\|\bmu\|+\ell(\bmu^e)\leq n-1$ when $l\neq 1$ and $\|\bmu\|+\ell(\bmu^e)\leq n-2$ when $l=1$, and be 0 otherwise. {\color{black} Clearly if $A\in GA_n(q)$ is of modified type $(\bla,k)$, then the type of $A$ in $GA_n(q)$ is $(\bla^{\uparrow n-1},k)$ and hence its type as an element in $GL_n(q)$ is $\widetilde{\bla^{\uparrow n-1}}_{(k)}$. Then the modified type denoted by $\widehat{\bla}_{(k)}$  of $A$ viewed as an element in $GL_n(q)$ is 
\begin{equation}\label{eq:two-modified-type}
\widehat{\bla}_{(k)}=\left\{\begin{array}{cc}
\bla,&\text{ if }k=0,\\
\widetilde{\bla}_{(k-1)},&\text{ if }k\geq 1. 
\end{array}
\right.
\end{equation}
 }

Denote by $\mathcal{A}_n(q)$ the center of the integral group algebra $\mathbb{Z}[GA_n(q)]$. We summarize these discussions in the following. 

\begin{lem}
   \label{lem:Kbmu}
The set $\{{P}_{(\bmu,l)}(n) \neq 0 | (\bmu,l) \in \widehat{\mathscr{P}}^{\mathsf{a}}(\Phi_q)\}$  forms the class sum $\mathbb{Z}$-basis for the center $\mathcal{A}_n(q)$, for each $n\ge 1$.
\end{lem}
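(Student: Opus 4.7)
The plan is to invoke the general fact that for any finite group $G$, the class sums form a $\mathbb{Z}$-basis of the center of $\mathbb{Z}[G]$, applied to $G=GA_n(q)$, and then match the two parametrizations (types versus modified types) to identify this basis with the indicated set of $P_{(\bmu,l)}(n)$.

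First I would recall from Proposition~\ref{prop:conjugate-aff} that the conjugacy classes of $GA_n(q)$ are in bijection with $\mathscr{P}^{\mathsf{a}}_n(\Phi_q)$, with canonical representatives $J_{(\bla,k)}$ for $(\bla,k)\in\mathscr{P}^{\mathsf{a}}_n(\Phi_q)$. Since $\mathcal{A}_n(q)$ is the center of the integral group algebra $\mathbb{Z}[GA_n(q)]$, a standard elementary argument (an element of $\mathbb{Z}[GA_n(q)]$ is central iff its coefficients are constant on conjugacy classes) shows that the sums over the conjugacy classes of $GA_n(q)$ form a $\mathbb{Z}$-basis of $\mathcal{A}_n(q)$.

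Next I would translate from the type parametrization to the modified-type parametrization. Given an element of $GA_n(q)$ of type $(\bla,k)\in\mathscr{P}^{\mathsf{a}}_n(\Phi_q)$, its modified type $(\bmu,l)=(\mathring{\bla},k)$ lies in $\widehat{\mathscr{P}}^{\mathsf{a}}(\Phi_q)$, and conversely $(\bmu,l)\in\widehat{\mathscr{P}}^{\mathsf{a}}(\Phi_q)$ corresponds to the type $(\bmu^{\uparrow n-1},l)\in\mathscr{P}^{\mathsf{a}}_n(\Phi_q)$ precisely when $\bmu^{\uparrow n-1}$ is defined and the required part condition holds. By the definition of $\bmu^{\uparrow n-1}$ in \eqref{eq:obmu}--\eqref{eq:bmuarr}, this happens exactly under the numerical condition recorded in the text just before the lemma: $\|\bmu\|+\ell(\bmu^e)\leq n-1$ if $l\neq 1$, and $\|\bmu\|+\ell(\bmu^e)\leq n-2$ if $l=1$ (the latter being needed so that $\bmu^{\uparrow n-1}(t-1)$ actually contains a part equal to $1$). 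In these cases $\mathscr{K}_{(\bmu,l)}(n)=GA_n(q)\cap\mathscr{K}_{(\bmu,l)}$ is a single nonempty conjugacy class of $GA_n(q)$, and otherwise it is empty and $P_{(\bmu,l)}(n)=0$ by convention.

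Putting these two steps together, the map $(\bmu,l)\mapsto \mathscr{K}_{(\bmu,l)}(n)$ gives a bijection between $\{(\bmu,l)\in\widehat{\mathscr{P}}^{\mathsf{a}}(\Phi_q):P_{(\bmu,l)}(n)\neq 0\}$ and the set of conjugacy classes of $GA_n(q)$. Hence $\{P_{(\bmu,l)}(n)\neq 0:(\bmu,l)\in\widehat{\mathscr{P}}^{\mathsf{a}}(\Phi_q)\}$ is exactly the set of class sums of $GA_n(q)$, which is the desired $\mathbb{Z}$-basis of $\mathcal{A}_n(q)$. There is no real obstacle here; the only point requiring care is the bookkeeping between $k$ and $k-1$ (the ``$-1$'' shift in the modified type) and the slightly different numerical conditions for $l=0$, $l=1$, and $l\geq 2$, all of which are already built into the definitions.
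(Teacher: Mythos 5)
Your proposal is correct and follows essentially the same route as the paper, which states this lemma as a summary of the immediately preceding discussion: the standard fact that conjugacy class sums form a $\mathbb{Z}$-basis of the center of an integral group algebra, combined with Proposition~\ref{prop:conjugate-aff} and the type/modified-type bookkeeping (including the correct nonemptiness conditions for $l=1$ versus $l\neq 1$). No discrepancies to report.
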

}

\section{Stability of the center $\mathcal{A}_n(q)$ of  $\mathbb{Z}[GA_{n}(q)]$}\label{sec:transvection}
In this section, we first recall the stability property for the center of the integral group algebra $\mathbb{Z}[GL_n(q)]$ obtained in \cite{WW19} and then we shall establish the stability for the structure constants of the graded algebra associated to $\mathcal{A}_n(q)$ after verifying a key observation. 
\subsection{Stability  in $GL_{n}(q)$}\hfill

{\color{black}
Let $V_n=\Fq^n$. For $h\in GL_n(q)$, the fixed point subspace by $h$ is denoted by
\[
V_n^h:=\ker(h-I_n)=\{v\in V_n |hv=v\}.
\]

An element $\mathsf{r}$ in $GL_n(q)$ is a {\em reflection} if its fixed point subspace has codimension 1. Let $\mathcal{R}_n$ be the set of reflections in $GL_n(q)$.  That is 
$$
\mathcal{R}_n=\{h\in GL_n(q)|~ \text{rank}(h-I_n)=1\}. 
$$Then $\mathcal{R}_n$ is a generating set for the group $GL_n(q)$, since all of the elementary matrices used in Gaussian elimination are reflections and every invertible matrix is row equivalent to the identity matrix. The {\em reflection length} of an element $h\in GL_n(q)$ is defined by
\begin{equation}
  \label{eq:length}
\ell(h) := \min \big\{k \big| h= \mathsf{r}_1\mathsf{r}_2\cdots \mathsf{r}_k \text{ for some } \mathsf{r}_i\in \mathcal{R}_n \big\}.
\end{equation}
By \cite{WW19}, the following holds for the length $\ell(h)$. 
\begin{lem}\cite[Lemma 3.2]{WW19} \label{lem:length-ineq}
$\quad$
\begin{enumerate}
\item
If $h\in GL_{n}(q)$ is of modified type $\bmu$, then $\ell(h)=\|\bmu\|.$

\item
If the modified types of $h_1,h_2$, $h_1h_2 \in GL_n(q)$ are $\bla,\bmu$ and $\bnu$, then $\|\bnu\|\leq \|\bla\|+\|\bmu\|$.
\end{enumerate}
\end{lem}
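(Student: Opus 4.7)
The plan is to reduce part (1) to the identity $\ell(h) = n - \dim V_n^h$, and then deduce part (2) as an immediate consequence of subadditivity.

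For part (1), I would first prove $\ell(h) = \mathrm{rank}(h - I_n)$. The inequality $\ell(h) \geq \mathrm{rank}(h - I_n)$ follows from the telescoping identity
\begin{equation*}
h - I_n = \sum_{i=1}^{k} (\mathsf{r}_1\cdots \mathsf{r}_{i-1})(\mathsf{r}_i - I_n)
\end{equation*}
for any factorization $h = \mathsf{r}_1\cdots \mathsf{r}_k$ into reflections: each summand has rank one, so the image of $h - I_n$ lies in a sum of $k$ lines, whence $\mathrm{rank}(h - I_n)\leq k$. For the matching upper bound I would induct on $d = \mathrm{rank}(h - I_n)$, constructing at each step a reflection $\mathsf{r}$ with $\dim V_n^{\mathsf{r}^{-1}h} > \dim V_n^h$; such an $\mathsf{r}$ can be obtained by picking any $v$ with $hv \neq v$ together with a linear form vanishing on $V_n^h$ but not on $hv - v$. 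Having $\ell(h) = n - \dim V_n^h$ in hand, I would next compute $\dim V_n^h$ using the rational canonical form. From the $\Fq[t]$-module decomposition $V_n \cong \bigoplus_{f,i} \Fq[t]/(f)^{\bla_i(f)}$ with $\bla$ the type of $h$, the operator $t - 1$ acts invertibly on each summand with $f \neq t-1$ and has one-dimensional kernel on each unipotent block $\Fq[t]/(t-1)^{\bla_i(t-1)}$. Hence $\dim V_n^h = \ell(\bla(t-1))$, and unwinding the definition of the modified type $\bmu = \mathring{\bla}$ gives
\begin{equation*}
\|\bmu\| = \|\bla\| - \ell(\bla(t-1)) = n - \dim V_n^h = \ell(h),
\end{equation*}
as claimed.

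Part (2) then follows at once: concatenating reduced reflection factorizations for $h_1$ and $h_2$ produces an expression of $h_1 h_2$ as a product of $\|\bla\| + \|\bmu\|$ reflections, so $\|\bnu\| = \ell(h_1 h_2) \leq \|\bla\| + \|\bmu\|$. The only genuinely substantive step is the upper bound $\ell(h) \leq \mathrm{rank}(h - I_n)$; everything else is a direct computation with the rational canonical form, so I do not anticipate any serious obstacle.
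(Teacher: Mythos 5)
Your route is essentially the one the paper relies on: this lemma is imported from \cite{WW19}, whose argument combines the identity $\ell(h)=\operatorname{codim}V_n^h=\operatorname{rank}(h-I_n)$ (quoted in the present paper as Lemma~\ref{lem:l=cod}, from \cite{HLR17}) with exactly the canonical-form computation you give: $t-1$ acts invertibly on each summand $\Fq[t]/(f)^{\bla_i(f)}$ with $f\neq t-1$ and has one-dimensional kernel on each $\Fq[t]/(t-1)^{\bla_i(t-1)}$, so $\dim V_n^h=\ell(\bla(t-1))$ and $\ell(h)=\|\bla\|-\ell(\bla(t-1))=\|\mathring{\bla}\|$; part (2) is then subadditivity. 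That computation and the deduction of (2) are correct. The one step that is not right as stated is your recipe for the reflection in the induction proving $\ell(h)\le\operatorname{rank}(h-I_n)$: taking $\phi$ to vanish on $V_n^h$ with $\phi(hv-v)\neq 0$ is not the right condition. Setting $u=hv-v$ and $\mathsf{r}=I_n+c\,u\phi$, the map $\mathsf{r}^{-1}h$ acquires a new fixed vector only if $\phi(v)\neq 0$ (one is forced to take $c=\phi(v)^{-1}$ so that $\mathsf{r}v=hv$; if $\phi(v)=0$ a short check shows $V_n^{\mathsf{r}^{-1}h}=V_n^h$ and no progress is made), and $\mathsf{r}$ is invertible iff $1+c\,\phi(u)\neq 0$, i.e.\ $\phi(hv)\neq 0$. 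So the correct requirement is that $\phi$ vanish on $V_n^h$ and be nonzero on both $v$ and $hv$; such $\phi$ exists because the excluded functionals form a union of two proper subspaces of the annihilator of $V_n^h$, which cannot exhaust it. Alternatively, you can simply invoke Lemma~\ref{lem:l=cod}(1), which the paper already quotes, and skip the induction entirely; with that repair the proof is complete.
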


The combinatorics of partial orders on $GL_n(q)$ arising from the reflection lengths has been studied in \cite{HLR17}.
Recall the codimension ${\rm codim} V_n^h  =n -\dim V_n^h={\rm rank}(h-I_n)$. The reflection length has the following simple and useful geometric interpretation.
\begin{lem}
\cite[Propositions~ 2.9, 2.16]{HLR17}\label{lem:length-property}
  \label{lem:l=cod}
\begin{enumerate}
\item
For $h\in GL_n(q)$, we have $\ell(h) ={\rm codim} V_n^h$.
\item
Suppose $h_1,h_2\in GL_n(q)$. Then $\ell(h_1h_2)\leq\ell(h_1)+\ell(h_2)$.
\item
If $\ell(h_1h_2)=\ell(h_1)+\ell(h_2)$, then $V_n^{h_1}\cap V_n^{h_2}=V_n^{h_1h_2}$ and $V_n=V_n^{h_1}+V_n^{h_2}$.
\end{enumerate}
\end{lem}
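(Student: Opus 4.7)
The plan is to prove part (1) directly and then deduce parts (2) and (3) from it by elementary subspace considerations.

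For part (1), the inequality $\codim V_n^h \leq \ell(h)$ follows from the inclusion $V_n^{h_1}\cap V_n^{h_2}\subseteq V_n^{h_1h_2}$, which is an immediate consequence of the identity $h_1h_2 v = h_1 v = v$ whenever $h_1v = h_2v = v$. Indeed, writing $h = \mathsf{r}_1\cdots \mathsf{r}_k$ with each $\codim V_n^{\mathsf{r}_i} = 1$, one has
\[
\codim V_n^h \;\leq\; \codim\Big(\bigcap_{i=1}^k V_n^{\mathsf{r}_i}\Big)\;\leq\; \sum_{i=1}^k \codim V_n^{\mathsf{r}_i} = k,
\]
and taking $k = \ell(h)$ gives the bound. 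For the reverse inequality $\ell(h)\leq \codim V_n^h$, I would argue by induction on $d:=\codim V_n^h$. The case $d=0$ is trivial, and in the inductive step I would construct a single reflection $\mathsf{r}$ such that $V_n^{\mathsf{r} h}\supsetneq V_n^h$, which yields $\ell(h)\leq 1+\ell(\mathsf{r} h)\leq d$ by induction combined with the already-established upper bound.

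To build such an $\mathsf{r}$, pick any $v\notin V_n^h$; since $h$ preserves $V_n^h$, also $hv\notin V_n^h$. Choose a linear functional $\beta\in V_n^*$ vanishing on $V_n^h$ with $\beta(v),\beta(hv)\in\Fq^\times$; such a $\beta$ exists because the space of $\beta$ vanishing on $V_n^h$ is $d$-dimensional over $\Fq$, and the two conditions $\beta(v)=0$, $\beta(hv)=0$ cut out proper subspaces whose union does not cover the ambient space (a routine counting check). Setting $\alpha := (v-hv)/\beta(hv)$ and $\mathsf{r} := I_n + \alpha\beta^T$, a direct computation shows that $\mathsf{r}$ is a reflection (its fixed hyperplane is $\ker\beta\supseteq V_n^h$, and $\det\mathsf{r} = 1+\beta(\alpha)=\beta(v)/\beta(hv)\neq 0$) with $\mathsf{r}(hv)=v$. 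Hence $\mathsf{r} h$ fixes both $V_n^h$ pointwise and the additional vector $v$, strictly enlarging the fixed subspace as required.

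Parts (2) and (3) are then immediate consequences of (1) together with the elementary identity $\dim(V_n^{h_1}+V_n^{h_2}) + \dim(V_n^{h_1}\cap V_n^{h_2}) = \dim V_n^{h_1}+\dim V_n^{h_2}$. Since $V_n^{h_1}\cap V_n^{h_2}\subseteq V_n^{h_1h_2}$, one obtains
\[
\ell(h_1h_2) = \codim V_n^{h_1h_2}\leq \codim(V_n^{h_1}\cap V_n^{h_2})\leq \codim V_n^{h_1}+\codim V_n^{h_2} = \ell(h_1)+\ell(h_2),
\]
proving (2). For (3), equality throughout forces $V_n^{h_1}+V_n^{h_2}=V_n$ (from the second inequality) and $V_n^{h_1}\cap V_n^{h_2}=V_n^{h_1h_2}$ (from the first inequality). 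The main obstacle is the explicit construction of the reflection $\mathsf{r}$ in the inductive step of (1); both the invertibility of $\mathsf{r}$ and the existence of a suitable $\beta$ must be guaranteed uniformly in $q$ and $d$, but both reduce to routine counting once the rank-one form $\mathsf{r} = I_n + \alpha\beta^T$ is adopted.
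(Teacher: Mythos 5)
Your proof is correct. Note that the paper itself gives no argument for this lemma --- it is quoted verbatim from \cite[Propositions 2.9, 2.16]{HLR17} --- so there is no internal proof to compare against; what you have written is a valid self-contained reconstruction of the standard argument. All the delicate points check out: the inclusion $V_n^{h_1}\cap V_n^{h_2}\subseteq V_n^{h_1h_2}$ gives the easy inequality $\codim V_n^h\le\ell(h)$; in the inductive step your functional $\beta$ exists because a vector space over any field is never the union of two proper subspaces (one does not even need the counting estimate); the matrix-determinant lemma gives $\det(\mathsf{r})=\beta(v)/\beta(hv)\neq 0$, and $\mathsf{r}-I_n=\alpha\beta^T$ has rank one since $v\neq hv$, so $\mathsf{r}$ is a reflection in the paper's sense (fixed space of codimension one, transvections allowed); and $\mathsf{r}h$ fixes $V_n^h+\langle v\rangle$ pointwise, strictly enlarging the fixed space. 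Parts (2) and (3) then follow exactly as you say from the modular identity for dimensions. The only cosmetic remark is that in the inductive step you do not actually need to invoke the already-established upper bound: $\ell(h)\le 1+\ell(\mathsf{r}h)\le 1+\codim V_n^{\mathsf{r}h}\le d$ follows from the induction hypothesis alone.
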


We recall the stability property concerning the structure constants  for center $\mathcal{Z}(\mathbb{Z} [GL_{n}(q)])$ of the integral group algebra $\mathbb{Z}[GL_n(q)]$ established in \cite{WW19}. Recall that $K_\bla(n)$ is the conjugacy class sum corresponding to the conjugacy classes $\mathscr K_\bla (n)$, for $\|\bla \| + \ell(\bla^e) \le n$.
Following \cite{WW19} we write the multiplication in the center $\mathcal{Z}\left(\mathbb{Z} GL_{n}(q)\right)$ of the integral group algebra $\mathbb{Z}[GL_n(q)]$ as
\begin{equation}
  \label{eq:a(n)}
K_\bla(n) K_\bmu(n)=\sum_{\bnu:\; \|\bnu\|\leq \|\bla\|+\|\bmu\|} a^\bnu_{\bla\bmu}(n)K_\bnu(n).
\end{equation}

\begin{thm}\cite[Theorem 3.11]{WW19}\label{thm:WW19}
Let $\bla,\bmu,\bnu\in\mP(\Phi_q)$.
If $\|\bnu\|= \|\bla\|+\|\bmu\|$,
then $a^\bnu_{\bla\bmu}(n)$ is independent of $n$. 
{\em (In this case, we shall write $a^\bnu_{\bla\bmu}(n)$ as $a^\bnu_{\bla\bmu} \in \N$.)}
\end{thm}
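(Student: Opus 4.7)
The plan is to establish $a^\bnu_{\bla\bmu}(n+1) = a^\bnu_{\bla\bmu}(n)$ for all $n$ large enough by building an explicit bijection between factoring pairs in $GL_n(q)$ and $GL_{n+1}(q)$. First I would fix $g_0 \in \mathscr{K}_\bnu(n)$ and express
\[
a^\bnu_{\bla\bmu}(n)=\bigl|\{(g_1,g_2) \in \mathscr{K}_\bla(n) \times \mathscr{K}_\bmu(n): g_1 g_2 = g_0\}\bigr|.
\]
Under the natural embedding $GL_n(q) \hookrightarrow GL_{n+1}(q)$, $g \mapsto \diag(g,1)$, modified types are preserved, so $\tilde g_0 := \diag(g_0,1) \in \mathscr{K}_\bnu(n+1)$. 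The embedding of pairs already yields the monotonicity $a^\bnu_{\bla\bmu}(n) \leq a^\bnu_{\bla\bmu}(n+1)$, so it suffices to show every pair factoring $\tilde g_0$ in $GL_{n+1}(q)$ is in the image.

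To that end, I would fix any pair $(\tilde g_1, \tilde g_2)$ of the correct modified types with $\tilde g_1 \tilde g_2 = \tilde g_0$. Under $\|\bnu\|=\|\bla\|+\|\bmu\|$ we have $\ell(\tilde g_1)+\ell(\tilde g_2)=\ell(\tilde g_0)$, so by Lemma~\ref{lem:length-property}(3) $V_{n+1}^{\tilde g_0} \subseteq V_{n+1}^{\tilde g_i}$ for each $i$. Since $e_{n+1} \in V_{n+1}^{\tilde g_0}$, each $\tilde g_i$ fixes $e_{n+1}$, whence
\[
\tilde g_i = \begin{bmatrix} g'_i & 0 \\ r_i & 1 \end{bmatrix}
\]
with $g'_i \in GL_n(q)$ and row vector $r_i \in \Fq^n$, and the product equation gives $g'_1 g'_2 = g_0$ together with $r_2 = -r_1 g'_2$. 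A rank calculation combined with length-additivity in $GL_{n+1}(q)$ forces each $r_i$ to lie in the row span of $g'_i - I$ (equivalently, in the annihilator of $V_n^{g'_i}$) and each $g'_i$ to have modified type $\bla$ or $\bmu$ respectively, with length-additivity passing down to the $GL_n(q)$-factorization $g'_1 g'_2 = g_0$. Substituting $r_2 = -r_1 g'_2$ into the condition that $r_2$ annihilates $V_n^{g'_2}$ then shows that $r_1$ annihilates $V_n^{g'_2}$ as well, so $r_1 \in \operatorname{ann}(V_n^{g'_1} + V_n^{g'_2})$; but Lemma~\ref{lem:length-property}(3) applied to $g'_1 g'_2 = g_0$ gives $V_n^{g'_1} + V_n^{g'_2} = V_n$, forcing $r_1 = r_2 = 0$. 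Hence $\tilde g_i = \diag(g'_i, 1)$ is already the embedded pair.

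The main obstacle is the rank-and-length bookkeeping in the middle step: one must rule out $r_i \notin \operatorname{rowspan}(g'_i - I)$, which would give $\operatorname{rank}(\tilde g_i - I) = \operatorname{rank}(g'_i - I) + 1$, and combined with the requirement that $\tilde g_i$ has length $\|\bla\|$ or $\|\bmu\|$, would force $\ell(g'_i) \leq \|\bla\|-1$ or $\|\bmu\|-1$, contradicting $\|\bnu\| = \ell(g'_1 g'_2) \leq \ell(g'_1) + \ell(g'_2)$. Simultaneously, one must check that when $r_i \in \operatorname{rowspan}(g'_i - I)$, say $r_i = s_i(g'_i - I)$, conjugation by the lower-triangular block $\bigl[\begin{smallmatrix} I & 0 \\ s_i & 1 \end{smallmatrix}\bigr]$ carries $\diag(g'_i,1)$ to $\tilde g_i$, ensuring their modified types coincide (not just their lengths). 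Once these compatibility conditions are in place, the annihilator identity $\operatorname{ann}(V_n^{g'_1} + V_n^{g'_2}) = 0$ closes the argument cleanly.
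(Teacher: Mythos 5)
This theorem is imported from \cite{WW19} and not reproved in the present paper, so the relevant comparison is with the original argument there: your proof is correct, and it is essentially the same argument — fix $g_0$, embed $GL_n(q)\hookrightarrow GL_{n+1}(q)$, and use length-additivity together with Lemma~\ref{lem:l=cod} to force any factoring pair of $\diag(g_0,1)$ to fix $e_{n+1}$ and to have vanishing extra row, hence to come from a factoring pair in $GL_n(q)$. Your row-span/annihilator bookkeeping is just an explicit coordinate version of the fixed-space/moved-space decomposition used in \cite{WW19}, and all the steps (monotonicity, the rank dichotomy ruling out $r_i\notin\mathrm{rowspan}(g_i'-I)$, the conjugation by the unipotent block to match modified types, and the final $\mathrm{ann}(V_n^{g_1'}+V_n^{g_2'})=0$) check out.
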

\subsection{A key observation}\label{subsect:general} 
In this subsection, we shall formulate a general observation which can be applied to the case $GA_n(q)$ and which are expected to  hold for other classical finite groups. 
Suppose $H_{n}(q)$ for each $ n\geq 1 $ is a subgroup of $GL_{n}(q)$, and moreover these subgroups satisfy 
$$H_{1}(q) \subset \cdots \subset H_{n} (q)\subset H_{n+1}(q) \subset \cdots$$
which is compatible with the embedding 
$$
GL_0(q)\subset GL_1(q)\subset\cdots \subset GL_n(q)\subset GL_{n+1}(q)\subset\cdots. 
$$
That is 
\begin{equation}\label{eq:embedding-h}
h^{(m)}:=\begin{bmatrix}h&0\\0&I_m \end{bmatrix}\in H_{n+m}(q)
\end{equation}
for any $h\in H_n(q)$ and $m\geq 0$. 
Let $\mathbb{Z}[H_{n}(q)]$ be the integral group algebra of $H_{n}(q)$ over the ring of integers, and denote by $\mathcal{Z}(\mathbb{Z}[H_{n}(q)])$ the center of $\mathbb{Z}[H_{n}(q)]$ for each $n\geq 0$.  For $h\in H_n(q)$, denote by $\mathscr{K}_G(h), \mathscr{K}_H(h)$ the conjugacy class containing $h$ in $GL_n(q)$ and $H_n(q)$, respectively. Clearly $\mathscr{K}_H(h)\subseteq \mathscr{K}_G(h)$ and if $h$ is of modified type $\bla$ as an element in $GL_n(q)$ then $\mathscr{K}_G(h)=\mathscr{K}_{\bla}(n)$. For convenience,  we write
  $$\llbracket h \mathbb{\rrbracket}_{G}:=\sum_{g \in \mathscr{K}_{G}(h)}g, \quad \llbracket h \mathbb{\rrbracket}_{H}:=\sum_{g \in \mathscr{K}_{H}(h)}g. $$
for any $h\in H_n(q)$. Then $\llbracket h \rrbracket_{G}, \llbracket h \rrbracket_{H}$ are the conjugacy class sums in the center $\mathcal{Z}(\mathbb{Z} [GL_{n}(q)])$ and $ \mathcal{Z}(\mathbb{Z} [H_{n}(q)]))$. Suppose $h_{1}, h_{2}, h_{3} \in H_{n}$, let $b_{h_{1} h_{2}}^{h_{3}}$ and $a_{h_{1} h_{2}}^{h_{3}}$ be the structure constant in the center of the group algebra of $H_{n}(q)$ and $GL_{n}(q)$, that is, 
$$\llbracket h_{1} \rrbracket_{G} \llbracket h_{2} \rrbracket_{G}=a_{h_{1} h_{2}}^{h_{3}} \llbracket h_{3} \rrbracket_{G}+ \text{other terms}. \quad( in~ \mathcal{Z}(\mathbb{Z} [GL_{n}(q)]))$$
$$\llbracket h_{1} \rrbracket_{H} \llbracket h_{2} \rrbracket_{H}=b_{h_{1} h_{2}}^{h_{3}} \llbracket h_{3} \rrbracket_{H}+ \text{other terms}.\quad( in~ \mathcal{Z}(\mathbb{Z} [H_{n}(q)]))$$
Clearly for $h_1,h_2,h_3\in H_n(q)$, if $h_1,h_2,h_3$ being viewed as elements in $GL_n(q)$ are of modified types $\bla,\bmu,\bnu$, respectively,  then 
\begin{equation}\label{eq:two-a-number}
a_{h_1h_2}^{h_3}=a_{\bla,\bmu}^{\bnu}(n). 
\end{equation}
Observe that we have 
\begin{align*}
a_{h^{(m)}_{1} h^{(m)}_{2}}^{h^{(m)}_{3}}&=\sharp\big\{(h_1',h_2')\big| h_1'\in\mathscr{K}_G(h^{(m)}_1),h_2'\in\mathscr{K}_G(h^{(m)}_2), h_1'h_2'=h^{(m)}_3 \big\}, \\
b_{h^{(m)}_{1} h^{(m)}_{2}}^{h^{(m)}_{3}}&=\sharp\big\{(h_1',h_2')\big| h_1'\in\mathscr{K}_H(h^{(m)}_1),h_2'\in\mathscr{K}_H(h^{(m)}_2), h_1'h_2'=h^{(m)}_3 \big\}, \label{eq:b-compute}
\end{align*}
for each $m\geq 0$ and hence 
\begin{equation}\label{eq:inequality}
a_{h^{(m)}_{1} h^{(m)}_{2}}^{h^{(m)}_{3}}\leq a_{h_{1}^{(m+1)} h_{2}^{(m+1)} }^{h_{3}^{(m+1)} },\quad  b_{h^{(m)}_{1} h^{(m)}_{2}}^{h^{(m)}_{3}}\leq b_{h_{1}^{(m+1)} h_{2}^{(m+1)} }^{h_{3}^{(m+1)} }
\end{equation} 
\text{ and moreover }
\begin{equation}\label{eq:inequality-2}
b_{h^{(m)}_{1} h^{(m)}_{2}}^{h^{(m)}_{3}}\leq a_{h^{(m)}_{1} h^{(m)}_{2}}^{h^{(m)}_{3}}
\end{equation}
for any $h_1,h_2,h_3\in H_n(q)$ and $m\geq 0$. Thus, we have the following inequalities: 
\begin{align}\label{eq:a-b-constant}
\begin{matrix}
a_{h_1h_2}^{h_3}&\leq& a_{h_1^{(1)}h_2^{(1)}}^{h_3^{(1)}}&\leq&a_{h_1^{(2)}h_2^{(2)}}^{h_3^{(2)}}&\leq&\cdots\\
\begin{rotate}{90}$\leq$\end{rotate}&&\begin{rotate}{90}$\leq$\end{rotate}&&\begin{rotate}{90}$\leq$\end{rotate}&&\cdots\\
b_{h_1h_2}^{h_3}&\leq& b_{h_1^{(1)}h_2^{(1)}}^{h_3^{(1)}}&\leq&b_{h_1^{(2)}h_2^{(2)}}^{h_3^{(2)}}&\leq&\cdots
\end{matrix}
\end{align}
\begin{defn}
  The family of groups $H_{1}(q)\subseteq H_{2}(q) \subseteq \ldots \subseteq H_{n}(q) \subseteq H_{n+1}(q) \subseteq \ldots$ is said to satisfy the {\it strictly increasing property} if
the following holds for  any $h_1,h_2,h_3 \in H_n(q), n\geq 1, m \geq 0$:
   \begin{equation}\label{eq:sip}
\text{if } b_{h_1^{(m)}h_2^{(m)}}^{h_3^{(m)}}<b_{h_1^{(m+1)}h_2^{(m+1)}}^{h_3^{(m+1)}},  \text{ then }b_{h_1^{(m+1)}h_2^{(m+1)}}^{h_3^{(m+1)}}<b_{h_1^{(m+2)}h_2^{(m+2)}}^{h_3^{(m+2)}}. 
   \end{equation} 
 
\end{defn}

\begin{prop}\label{prop:general stability}
Suppose the family of subgroups  $H_{1}(q) \subseteq H_{2}(q) \subseteq \ldots \subseteq H_{n}(q) \subseteq H_{n+1}(q) \subseteq \ldots$ satisfies the strictly increasing property.  Let $ h_1,h_2,h_3 \in H_n(q)\subset GL_n(q)$. If $\ell\left(h_{3}\right)=\ell\left(h_{1}\right)+\ell\left(h_{2}\right)$, then $b_{h_{1} h_{2}}^{h_{3}}=b_{h_{1}^{(m)} h_{2}^{(m)}}^{h_{3}^{(m)}}$ for any $m \geqslant 0$. 
\end{prop}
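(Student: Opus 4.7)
\textbf{Plan for Proposition~\ref{prop:general stability}.} My strategy is to combine the uniform upper bound coming from Theorem~\ref{thm:WW19} with the strictly increasing property to trap the weakly increasing integer sequence $\{b_{h_1^{(m)}h_2^{(m)}}^{h_3^{(m)}}\}_{m\geq 0}$ between a constant and itself. No new calculation is needed; the proof is purely formal given the two inputs already in hand.

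First, I would observe that the embedding $h\mapsto h^{(m)}$ preserves modified types as elements of $GL_\bullet(q)$: adjoining $m$ blocks $I_m$ augments $\bla(t-1)$ by $m$ extra parts equal to $1$, which become $0$ (and are therefore discarded) after the $-1$ shift in the definition of modified type. Consequently, by Lemma~\ref{lem:length-ineq}(1) the reflection length $\ell(h_i^{(m)})$ is independent of $m$, so the hypothesis $\ell(h_3)=\ell(h_1)+\ell(h_2)$ translates into $\|\bnu\|=\|\bla\|+\|\bmu\|$ where $\bla,\bmu,\bnu$ denote the modified types of $h_1,h_2,h_3$, and this identity persists after every embedding.

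Second, I would invoke Theorem~\ref{thm:WW19} to conclude that $a_{h_1^{(m)}h_2^{(m)}}^{h_3^{(m)}}=a^{\bnu}_{\bla\bmu}(n+m)=a^{\bnu}_{\bla\bmu}$ is a fixed non-negative integer independent of $m$. Combined with the inequality $b_{h_1^{(m)}h_2^{(m)}}^{h_3^{(m)}}\leq a_{h_1^{(m)}h_2^{(m)}}^{h_3^{(m)}}$ from \eqref{eq:inequality-2}, this shows that the sequence $\{b_{h_1^{(m)}h_2^{(m)}}^{h_3^{(m)}}\}_{m\geq 0}$ is bounded above by $a^{\bnu}_{\bla\bmu}$ uniformly in $m$.

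Third, the sequence is weakly increasing by \eqref{eq:inequality}. If it were not constant, there would be some $m_0\geq 0$ at which the inequality is strict; the strictly increasing property \eqref{eq:sip} would then propagate the strict inequality to every $m\geq m_0$, producing an unbounded strictly increasing sequence of non-negative integers. This contradicts the uniform upper bound from the previous step, so the sequence must be constant, and in particular $b_{h_1 h_2}^{h_3}=b_{h_1^{(m)}h_2^{(m)}}^{h_3^{(m)}}$ for every $m\geq 0$. The only step that required any real thought is the bookkeeping between the length condition $\ell(h_3)=\ell(h_1)+\ell(h_2)$ and the degree condition $\|\bnu\|=\|\bla\|+\|\bmu\|$; once this is settled, the argument proceeds by the standard trapped-monotone-sequence trick. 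The nontrivial work that this proposition offloads onto the remainder of the paper is, of course, the verification of the strictly increasing property \eqref{eq:sip} for the specific family $GA_n(q)\subset GL_n(q)$.
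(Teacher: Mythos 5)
Your proposal is correct and follows essentially the same route as the paper's proof: translate the length hypothesis into $\|\bnu\|=\|\bla\|+\|\bmu\|$ via invariance of modified types under the embedding, bound the weakly increasing sequence $b_{h_1^{(m)}h_2^{(m)}}^{h_3^{(m)}}$ above by the $m$-independent constant $a_{\bla\bmu}^{\bnu}$ from Theorem~\ref{thm:WW19} together with \eqref{eq:inequality-2}, and use the strictly increasing property to rule out any strict jump. The only cosmetic difference is that you spell out why adjoining identity blocks preserves the modified type, which the paper leaves as an observation.
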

\begin{proof}
Suppose the modified types of $h_1,h_2,h_3$ regarding as elements in $GL_n(q)$ are $\bla,\bmu,\bnu$, respectively. Then  by \eqref{eq:embedding-h} we observe that the modified types of $h^{(m)}_1,h^{(m)}_2,h^{(m)}_3\in H_{n+m}(q)\subset GL_{n+m}(q)$ are also $\bla,\bmu,\bnu$ for any $m\geq 0$ and then we have by \eqref{eq:two-a-number} 
\begin{equation}\label{eq:two-a-number-1}
a_{h^{(m)}_1h^{(m)}_2}^{h^{(m)}_3}=a_{\bla,\bmu}^{\bnu}(n+m). 
\end{equation}
Now $\ell(h^{(m)}_1)=\|\bla\|, \ell(h^{(m)}_2)=\|\bmu\|, \ell(h^{(m)}_3)=\|\bnu\|$ by Lemma \ref{lem:length-ineq} and hence the assumption that $\ell\left(h_{3}\right)=\ell\left(h_{1}\right)+\ell\left(h_{2}\right)$ gives rise to $\|\bla\|+\|\bmu\|=\|\bnu\|$. Then by Theorem \ref{thm:WW19} and \eqref{eq:two-a-number-1} we have 
$$
a_{h_1h_2}^{h_3}=a_{h_1^{(1)}h_2^{(1)}}^{h_3^{(1)}}=a_{h_1^{(2)}h_2^{(2)}}^{h_3^{(2)}}=\cdots=a_{\bla,\bmu}^{\bnu}
$$
is a constant uniquely determined by $\bla,\bmu,\bnu$. 
So in such a situation, by \eqref{eq:a-b-constant} the increasing sequence $b_{h_{1} h_{2}}^{h_{3}} \leq b_{h_{1}^{(1)} h_{2}^{(1)}}^{h_{3}^{(1)}}\leq \cdots$ is bounded the constant $a_{\bla,\bmu}^{\bnu}$, that is, 
\begin{equation}\label{eq:bounded}
b_{h_{1}^{(m)} h_{2}^{(m)}}^{h_{3}^{(m)}}\leq a_{\bla,\bmu}^{\bnu}
\end{equation} 
for all $m\geq 0$. Now assume there exists $m_0\geq 0$ such that $b_{h^{(m_0)}_{1} h^{(m_0)}_{2}}^{h^{(m_0)}_{3}}<b_{h^{(m_0+1)}_{1} h^{(m_0+1)}_{2}}^{h^{(m_0+1)}_{3}}$, then the assumption that the family of groups $H_{1}(q) \subseteq H_{2}(q) \subseteq \ldots \subseteq H_{n}(q) \subseteq H_{n+1}(q) \subseteq \ldots$ satisfies the strictly increasing property implies that  
$b_{h^{(m_0+1)}_{1} h^{(m_0+1)}_{2}}^{h^{(m_0+1)}_{3}}<b_{h^{(m_0+2)}_{1} h^{(m_0+2)}_{2}}^{h^{(m_0+2)}_{3}}$.  Hence the subsequence $b_{h^{(m_0)}_{1} h^{(m_0)}_{2}}^{h^{(m_0)}_{3}}<b_{h^{(m_0+1)}_{1} h^{(m_0+1)}_{2}}^{h^{(m_0+1)}_{3}}<b_{h^{(m_0+2)}_{1} h^{(m_0+2)}_{2}}^{h^{(m_0+2)}_{3}}<\cdots$ is strictly increasing.  This contradicts to \eqref{eq:bounded}. Therefore $b_{h_{1} h_{2}}^{h_{3}}=b_{h_{1}^{(m)} h_{2}^{(m)}}^{h_{3}^{(m)}}$ for any $m \geqslant 0$. 
\end{proof}

\subsection{Length function via reflections in $GA_{n}(q)$}\hfill

   Denote $\mathcal{T}_{n}$ the set of all reflections in $GA_{n}(q)$, that is, 
   $$\mathcal{T}_{n}=\mathcal{R}_{n} \cap GA_{n}(q) =\left\{\left.A=\left[\begin{array}{cc}1 & 0 \\ \alpha & g\end{array}\right] \in GA_n(q)\right\rvert\, \operatorname{rank}\left(I_{n}-A\right)=1\right\}.$$ 
  
\begin{lem}\label{lem:generate}
The general affine group $GA_n(q)$ is generated by $\mathcal{T}_{n}$.    
  \end{lem}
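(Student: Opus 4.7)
The plan is to decompose an arbitrary element $A=\begin{bmatrix}1&0\\\alpha&g\end{bmatrix}\in GA_n(q)$ into two simpler pieces, each of which is manifestly a product of elements of $\mathcal{T}_n$. Specifically, I will use the factorization
\[
A=\begin{bmatrix}1&0\\ \alpha & I_{n-1}\end{bmatrix}\begin{bmatrix}1&0\\ 0 & g\end{bmatrix},
\]
reducing the problem to handling pure translations and pure linear embeddings separately.

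First I would identify two natural families of reflections in $GA_n(q)$. For a nonzero $\alpha\in\mathbb{F}_q^{n-1}$, the translation $T_\alpha=\begin{bmatrix}1&0\\ \alpha & I_{n-1}\end{bmatrix}$ satisfies $I_n-T_\alpha=\begin{bmatrix}0&0\\ -\alpha & 0\end{bmatrix}$, which has rank $1$; hence $T_\alpha\in\mathcal{T}_n$. On the other hand, for any reflection $r\in\mathcal{R}_{n-1}\subset GL_{n-1}(q)$, the block matrix $R_r=\begin{bmatrix}1&0\\ 0 & r\end{bmatrix}$ lies in $GA_n(q)$ with $I_n-R_r=\begin{bmatrix}0&0\\ 0 & I_{n-1}-r\end{bmatrix}$, so $\operatorname{rank}(I_n-R_r)=\operatorname{rank}(I_{n-1}-r)=1$ and therefore $R_r\in\mathcal{T}_n$.

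Next I would assemble the factorization. The translation piece $\begin{bmatrix}1&0\\ \alpha & I_{n-1}\end{bmatrix}$ is either the identity (if $\alpha=0$) or already a single reflection $T_\alpha\in\mathcal{T}_n$ by the previous step. For the linear piece $\begin{bmatrix}1&0\\ 0 & g\end{bmatrix}$, recall that $GL_{n-1}(q)$ is generated by $\mathcal{R}_{n-1}$, so I can write $g=r_1r_2\cdots r_k$ with each $r_i\in\mathcal{R}_{n-1}$. Then the block structure gives
\[
\begin{bmatrix}1&0\\ 0 & g\end{bmatrix}=R_{r_1}R_{r_2}\cdots R_{r_k},
\]
and each $R_{r_i}\in\mathcal{T}_n$ by the observation above. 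Combining the two factors yields the desired expression of $A$ as a product of elements of $\mathcal{T}_n$.

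There is no real obstacle here; the argument is essentially an exercise in bookkeeping. The only subtle point is confirming that the reflections of $GL_{n-1}(q)$ lift to reflections in $GA_n(q)$ via the embedding of \eqref{eq:embed-aff}, which follows immediately from the rank computation above. Note that this argument also provides the useful upper bound $\ell^{\mathsf{a}}(A)\leq 1+\ell(g)$ (with a saving of $1$ when $\alpha=0$), which will presumably be sharpened later to the identity $\ell^{\mathsf{a}}(A)=\ell(A)$ asserted in the introduction.
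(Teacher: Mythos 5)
Your proof is correct and follows essentially the same route as the paper's: factor $A$ as the translation $\begin{bmatrix}1&0\\ \alpha & I_{n-1}\end{bmatrix}$ (a reflection when $\alpha\neq 0$) times $\begin{bmatrix}1&0\\ 0 & g\end{bmatrix}$, and write $g$ as a product of reflections of $GL_{n-1}(q)$, each of which lifts to a reflection in $\mathcal{T}_n$. The rank computations verifying membership in $\mathcal{T}_n$ are the right justification, and the remark on the resulting bound $\ell^{\mathsf{a}}(A)\leq 1+\ell(g)$ correctly anticipates the argument of Lemma~\ref{lem:equal-length}.
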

\begin{proof} Fix $\left[\begin{array}{ll}1 & 0 \\ \alpha & g\end{array}\right]\in GA_n(q)$. Assume $g=\mathsf{r}_1\mathsf{r}_2 \cdots \mathsf{r}_t$ for some reflections $\mathsf{r}_1,\mathsf{r}_2,\ldots,\mathsf{r}_t\in \mathcal{R}_{n-1}$. Then if $\alpha=0$ we have 
 $$
 \left[\begin{array}{ll}1 & 0 \\ \alpha & g\end{array}\right]=\left[\begin{array}{lll}1 & 0 \\ 0 & \mathsf{r}_{1}\end{array}\right] \cdots\left[\begin{array}{ll}1 & 0 \\ 0 & \mathsf{r}_{t}\end{array}\right]. 
 $$  
 Meanwhile if $\alpha\neq 0$ we have 
 $$
 \left[\begin{array}{ll}1 & 0 \\ \alpha & g\end{array}\right]=\left[\begin{array}{ll}1 & 0 \\ \alpha & I_{n-1}\end{array}\right]\left[\begin{array}{ll}1 & 0 \\ 0 & \mathsf{r}_{1}\end{array}\right] \cdots\left[\begin{array}{ll}1 & 0 \\ 0 & \mathsf{r}_{t}\end{array}\right]. $$
 Clearly 
$
\left[\begin{array}{ll}1 & 0 \\ 0 & \mathsf{r}_{1}\end{array}\right], \, \ldots,\, \left[\begin{array}{ll}1 & 0 \\ 0 & \mathsf{r}_{t}\end{array}\right]\in\mathcal{T}_n $ and  $\left[\begin{array}{ll}1 & 0 \\ \alpha & I_{n-1}\end{array}\right]\in\mathcal{T}_n$ in the case $\alpha\neq 0$. This proves the lemma. 
\end{proof}
  Recall  the length $\ell(A)$ of $A$ being regarded as an element in $GL_n(q)$  via  \eqref{eq:length}. 
Similar by  Lemma \ref{lem:generate} we can define 
    \[\ell^{\mathsf{a}}(A):=\min \left\{k \mid A=\mathsf{r}_{1} \mathsf{r}_{2} \ldots \mathsf{r}_{k} , \text{for some } \mathsf{r}_{i} \in \mathcal{T}_{n}\right\}.\]
 Clearly if $A,B\in GA_n(q)$ then 
 \begin{equation}\label{eq:conj-length}
 \ell^{\mathsf{a}}(A)=\ell^{\mathsf{a}}(B) \text{ and }\ell(A)=\ell(B), \text { if }A \text{ is conjugate to } B \text{ in }GA_n(q).
 \end{equation} 
 
   \begin{lem} \label{lem:equal-length}
  Suppose $A\in GA_n(q)$. Then 
    \begin{equation}\label{eq:length-equal}
    \ell^{\mathsf{a}}(A)=\ell(A).
    \end{equation}
  \end{lem}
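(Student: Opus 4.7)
The inequality $\ell^{\mathsf{a}}(A)\geq \ell(A)$ is immediate from $\mathcal{T}_n\subseteq \mathcal{R}_n$, so the substance is to establish $\ell^{\mathsf{a}}(A)\leq \ell(A)$. The plan is to use the block structure of $A=\begin{bmatrix}1&0\\ \alpha&g\end{bmatrix}$ together with the geometric interpretation $\ell(A)=\mathrm{rank}(A-I_n)$ from Lemma~\ref{lem:length-property}, and to split into the two cases that already appear in Proposition~\ref{prop:conjugate-aff}, namely whether $\alpha\in\mathrm{Im}(I_{n-1}-g)$ or not. The key preliminary remark is that
\[
A-I_n=\begin{bmatrix}0&0\\ \alpha&g-I_{n-1}\end{bmatrix},
\]
so a one-line rank computation gives $\ell(A)=\ell(g)$ when $\alpha\in\mathrm{Im}(I_{n-1}-g)$ and $\ell(A)=\ell(g)+1$ otherwise.

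In the first case, by the argument in Proposition~\ref{prop:conjugate-aff} (conjugating by $B=\begin{bmatrix}1&0\\ \beta& I_{n-1}\end{bmatrix}$ with $(I_{n-1}-g)\beta=-\alpha$), $A$ is conjugate inside $GA_n(q)$ to $\begin{bmatrix}1&0\\ 0&g\end{bmatrix}$, and $\ell^{\mathsf{a}}$ is conjugation-invariant by \eqref{eq:conj-length}. Writing $g=\mathsf{r}_1\cdots\mathsf{r}_{\ell(g)}$ with $\mathsf{r}_i\in\mathcal{R}_{n-1}$, the factorization
\[
\begin{bmatrix}1&0\\ 0&g\end{bmatrix}=\prod_{i=1}^{\ell(g)}\begin{bmatrix}1&0\\ 0&\mathsf{r}_i\end{bmatrix}
\]
produces $\ell(g)$ reflections that all lie in $\mathcal{T}_n$, giving $\ell^{\mathsf{a}}(A)\leq \ell(g)=\ell(A)$.

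In the second case, Lemma~\ref{lem:conjugate-aff} and Proposition~\ref{prop:conjugate-aff} give that $A$ is conjugate in $GA_n(q)$ to $J_{(\bla,k)}$ for some $k\geq 1$. I would then factor
\[
J_{(\bla,k)}=S\cdot A_0, \qquad S=\begin{bmatrix}1&0\\ \gamma& I_{n-1}\end{bmatrix},\quad A_0=\begin{bmatrix}1&0\\ 0&J_\bla\end{bmatrix},
\]
where $\gamma$ is the single column of $J_{(\bla,k)}$ below the top-left $1$ (so $\gamma\neq 0$ and $\mathrm{rank}(S-I_n)=1$, i.e.\ $S\in\mathcal{T}_n$). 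Applying Case~1 to $A_0$ gives $\ell^{\mathsf{a}}(A_0)\leq \ell(g)$, whence $\ell^{\mathsf{a}}(A)=\ell^{\mathsf{a}}(J_{(\bla,k)})\leq 1+\ell^{\mathsf{a}}(A_0)\leq \ell(g)+1=\ell(A)$.

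The only part that needs a little care is verifying that the ``extra'' column $\gamma$ indeed produces a rank-one transvection in $\mathcal{T}_n$ (ensuring $S$ counts as a single affine reflection), and that the rank accounting matches the length count on the two sides so the bound in Case~2 is tight. I do not anticipate a real obstacle: the cases are exactly the dichotomy already used to classify conjugacy classes in $GA_n(q)$, and the length identity follows by matching $\mathrm{rank}(A-I_n)$ with the explicit reflection count in each case.
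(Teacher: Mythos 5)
Your proposal is correct and follows essentially the same route as the paper: the trivial inequality $\ell^{\mathsf{a}}(A)\ge\ell(A)$ from $\mathcal{T}_n\subseteq\mathcal{R}_n$, the dichotomy on whether $\alpha\in\mathrm{Im}(I_{n-1}-g)$, the rank computation giving $\ell(A)=\ell(g)$ or $\ell(g)+1$ via Lemma~\ref{lem:length-property}, and the exhibition of a word of the right length by lifting a reduced word for $g$ and, in the second case, prepending a single translation reflection. The only cosmetic difference is that the paper factors $A=\begin{bmatrix}1&0\\ \alpha&I_{n-1}\end{bmatrix}\begin{bmatrix}1&0\\ 0&g\end{bmatrix}$ directly (using that $\alpha\neq 0$ in that case), whereas you first conjugate to the canonical form $J_{(\bla,k)}$ before peeling off the rank-one factor; both work for the same reason.
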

  \begin{proof}
Since $\mathcal{T}_{n} \subseteq \mathcal{R}_{n}$, we obviously have $\ell^{\mathsf{a}}(A) \geqslant \ell(A)$. Suppose $A=\begin{bmatrix}1&0\\ \alpha &g\end{bmatrix}$.  Observe that if there exists $\beta\in\mathbb{F}_q^{n-1}$ such that $\alpha=(I_{n-1}-g)\beta$ then 
 $$\left[\begin{array}{cc}1 & 0 \\ -\beta & I_{n-1}\end{array}\right]\left[\begin{array}{ll}1 & 0 \\ \alpha & g\end{array}\right]\left[\begin{array}{ll}1 & 0 \\ \beta & I_{n-1}\end{array}\right]=\left[\begin{array}{ll}1 & 0 \\ 0 & g\end{array}\right].$$ This means $A$ is conjugate to $\left[\begin{array}{ll}1 & 0 \\ 0 & g\end{array}\right]$ in $GA_{n}(q)$ and hence  by \eqref{eq:conj-length} we obtain 
\begin{equation}\label{eq:equal-length-1}
\ell^{\mathsf{a}}(A)=\ell^{\mathsf{a}}\left(\begin{bmatrix}1&0 \\ 0 & g\end{bmatrix}\right), \quad \ell(A)=\ell\left(\begin{bmatrix}1&0 \\ 0 & g\end{bmatrix}\right)=\ell(g). 
\end{equation}
Assume $\ell(g)=t$ and a reduced expression is $ g=\mathsf{r}_{1} \mathsf{r}_{2} \ldots \mathsf{r}_{t}$ in $G l_{n-1}(q)$, 
then $$\left[\begin{array}{ll}1 & 0 \\ 0 & g\end{array}\right]=\left[\begin{array}{ll}1 & 0 \\ 0 & \mathsf{r}_{1}\end{array}\right] \cdots\left[\begin{array}{ll}1 & 0 \\ 0 & \mathsf{r}_{t}\end{array}\right] (\text{ in }GA_n(q)), $$
which leads to $\ell^{\mathsf{a}}\left(\begin{bmatrix}1&0 \\ 0 & g\end{bmatrix}\right) \leq t=\ell(g)=\ell\left(\begin{bmatrix}1&0 \\ 0 & g\end{bmatrix}\right) \leq \ell^{\mathsf{a}}\left(\begin{bmatrix}1&0 \\ 0 & g\end{bmatrix}\right)$. This together with  \eqref{eq:equal-length-1} gives rise to $\ell^{\mathsf{a}}(A)=\ell(A)$.

 Otherwise if there does not exist $\beta\in\mathbb{F}_q^{n-1}$ such that $\alpha=(I_{n-1}-g)\beta$, then by Lemma \ref{lem:length-property}  we have  $ \ell(A)=\operatorname{codim} V_{n}^{A}=\operatorname{rank}\left(I_{n}-A\right)$ and hence 
 \begin{equation}\label{eq:equal-length-2}
 \ell(A)=\operatorname{rank}\left[\begin{array}{cc}0 & 0 \\ -\alpha & I_{n-1}-g\end{array}\right]= 1+\operatorname{rank}(I_{n-1}-g)=1+\ell(g).
 \end{equation}
 Clearly in the situation $\alpha \neq 0$. Hence $\left[\begin{array}{lll}1 & 0 \\ \alpha & I_{n-1}\end{array}\right]\in \mathcal{T}_n$ and moreover  $$\left[\begin{array}{ll}1 & 0 \\ \alpha & g\end{array}\right]=\left[\begin{array}{lll}1 & 0 \\ \alpha & I_{n-1}\end{array}\right]\left[\begin{array}{lll}1 & 0 \\ 0 & \mathsf{r}_{1}\end{array}\right] \cdots\left[\begin{array}{ll}1 & 0 \\ 0 & \mathsf{r}_{t}\end{array}\right]. $$  This together with \eqref{eq:equal-length-2} leads to $\ell^{\mathsf{a}}(A) \leq t+1=\ell(g)+1=\ell(A)$.  Putting together, the lemma is proved. 
  \end{proof}
 
   For each $(\bmu,l)\in\widehat{\mathscr{P}}^{\mathsf{a}}(\Phi_q)$, we set 
\begin{equation}\label{eq:length-type}
\|(\bmu,l)\|=\left\{\begin{array}{cc}
\|\bmu\|,&\text{ if }l=0,\\
\|\bmu\|+1,&\text{ if }l\geq 1. 
\end{array}
\right.
\end{equation}
 
  \begin{prop}\label{prop:equal-length-2}
    If $A=\begin{bmatrix}1 & 0 \\ \alpha & g\end{bmatrix}\in GA_n(q)$ is of  modified type $(\bmu, l)$, then
  $$
  \ell^{\mathsf{a}}(A)=\|(\bmu, l)\|.
  $$
  \end{prop}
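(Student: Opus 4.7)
The plan is to reduce the computation of $\ell^{\mathsf{a}}(A)$ to the computation of $\ell(A)$ via Lemma~\ref{lem:equal-length}, and then express $\ell(A)$ in terms of the modified type of $A$ viewed as an element of $GL_n(q)$ using Lemma~\ref{lem:length-ineq}(1); finally I would do the combinatorial bookkeeping to verify the resulting size agrees with $\|(\bmu,l)\|$.

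\medskip

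First, since Lemma~\ref{lem:equal-length} already gives $\ell^{\mathsf{a}}(A)=\ell(A)$, it suffices to show $\ell(A)=\|(\bmu,l)\|$. By Lemma~\ref{lem:length-ineq}(1), $\ell(A)=\|\bnu\|$ where $\bnu\in\mathscr{P}(\Phi_q)$ is the modified type of $A$ as an element of $GL_n(q)$. By \eqref{eq:two-modified-type} this modified type is exactly $\widehat{\bmu}_{(l)}$, namely $\bmu$ if $l=0$ and $\widetilde{\bmu}_{(l-1)}$ if $l\ge 1$. So the task reduces to verifying
\[
\bigl\|\widehat{\bmu}_{(l)}\bigr\| \;=\; \|(\bmu,l)\|.
\]

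\medskip

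For the bookkeeping, I would exploit the general identity $\|\mathring{\bla}\|=\|\bla\|-\ell(\bla(t-1))$ (the modification subtracts $1$ from each unipotent Jordan block size). Since $A\in GL_n(q)$ has type $\widetilde{\bmu^{\uparrow n-1}}_{(l)}\in\mathscr{P}_n(\Phi_q)$, we get $\|\widehat{\bmu}_{(l)}\|=n-\ell\bigl(\widetilde{\bmu^{\uparrow n-1}}_{(l)}(t-1)\bigr)$. Splitting into cases:

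When $l=0$, from \eqref{eq:tilde-bla-2} and \eqref{eq:bmuarr}, $\widetilde{\bmu^{\uparrow n-1}}_{(0)}(t-1)$ has length $r+(n-r-\|\bmu\|)=n-\|\bmu\|$, where $r=\ell(\bmu^e)$. Hence $\|\widehat{\bmu}_{(0)}\|=\|\bmu\|=\|(\bmu,0)\|$.

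When $l\ge 1$, by \eqref{eq:tilde-bla-1}, passing from $\bmu^{\uparrow n-1}(t-1)$ to $\widetilde{\bmu^{\uparrow n-1}}_{(l)}(t-1)$ removes a part of size $l$ and inserts one of size $l+1$, leaving the length unchanged at $r+(n-1-r-\|\bmu\|)=n-1-\|\bmu\|$. Hence $\|\widehat{\bmu}_{(l)}\|=\|\bmu\|+1=\|(\bmu,l)\|$.

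\medskip

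The main obstacle I foresee is purely notational: one has to line up three different "modifications" (the modified type in $GA_n(q)$, the type of the $GL_n(q)$-lift via $\widetilde{\phantom{x}}_{(l)}$, and the further modification $\mathring{\phantom{x}}$ in $GL_n(q)$) and keep track of how lengths of the $(t-1)$-component behave under each, but no new idea beyond Lemma~\ref{lem:equal-length}, Lemma~\ref{lem:length-ineq}(1), and \eqref{eq:two-modified-type} is required.
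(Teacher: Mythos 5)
Your proof is correct, and it rests on the same two pillars as the paper's argument: the identity $\ell^{\mathsf{a}}(A)=\ell(A)$ from Lemma~\ref{lem:equal-length} and the $GL$-length formula $\ell=\|\text{modified type}\|$ of Lemma~\ref{lem:length-ineq}(1). The difference is where you apply the latter. The paper applies it to the block $g\in GL_{n-1}(q)$, whose modified type is $\bmu$ by definition, so $\ell(g)=\|\bmu\|$ at once; it then relates $\ell(A)$ to $\ell(g)$ by the case split on whether $\alpha\in\mathrm{Im}(I_{n-1}-g)$, using $\ell(A)=\operatorname{rank}(I_n-A)=1+\operatorname{rank}(I_{n-1}-g)$ in the case $l\geq 1$. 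You instead apply the formula to $A$ itself inside $GL_n(q)$, invoke \eqref{eq:two-modified-type} to identify its $GL_n(q)$-modified type as $\widehat{\bmu}_{(l)}$, and finish with partition bookkeeping showing $\|\widehat{\bmu}_{(l)}\|=\|(\bmu,l)\|$. Both routes are sound: yours outsources the matrix-level work to the assertion \eqref{eq:two-modified-type}, which the paper states in Section~2 without detailed proof but which is routine, while the paper's proof is more self-contained at this point. Your bookkeeping checks out; note only that in the case $l=0$ the detour through $n-\ell\bigl(\widetilde{\bmu^{\uparrow n-1}}_{(0)}(t-1)\bigr)$ is unnecessary, since $\widehat{\bmu}_{(0)}=\bmu$ gives $\|\widehat{\bmu}_{(0)}\|=\|\bmu\|$ immediately, and in the case $l\geq 1$ the direct computation $\|\widetilde{\bmu}_{(l-1)}\|=\|\bmu\|-(l-1)+l=\|\bmu\|+1$ is equally quick.
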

  \begin{proof}
  Observe that modified type of $g\in GL_{n-1}(q)$ is $\bmu$ and hence $\ell(g)=\|\bmu\|$. 
  If there exists $\beta\in\mathbb{F}_q^{n-1}$ such that $\alpha=(I_{n-1}-g)\beta$ then by the proof of Proposition \ref{prop:conjugate-aff} we obtain that $A$ is conjugate to $\left[\begin{array}{ll}1 & 0 \\ 0 & g\end{array}\right]$ in $GA_{n}(q)$ and moreover $l=0$. This means $\ell^{\mathsf{a}}(A)=\ell(A)=\ell(g)=\|\bmu\|$ by \eqref{eq:length-type}. Otherwise, we have $l\geq 1$ and hence again the proof of Lemma \ref{lem:equal-length} we obtain
  $\ell^{\mathsf{a}}(A)=\ell(A)=\ell(g)+1=\|\bmu\|+1.$ This proves the proposition. 
  \end{proof}
  By Proposition \ref{prop:equal-length-2} the following is straightforward. 
  \begin{lem}\label{lem:affine-length-ineq}
  If the modified types of $A,B$, $AB \in GA_n(q)$ are $(\bla,k),(\bmu,s)$ and $(\bnu,t)$, then $\|(\bnu,t)\|\leq \|(\bla,k)\|+\|(\bmu,s)\|$.
  \end{lem}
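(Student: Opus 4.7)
The statement to establish is the subadditivity
\[
\|(\bnu,t)\|\le \|(\bla,k)\|+\|(\bmu,s)\|
\]
under modified types of $A$, $B$, and $AB$ in $GA_n(q)$. My plan is to translate the quantities $\|(\cdot,\cdot)\|$ back into reflection lengths, where subadditivity is essentially built in.

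First, I would apply Proposition \ref{prop:equal-length-2} to each of $A$, $B$, $AB$ separately: this identifies $\|(\bla,k)\|=\ell^{\mathsf{a}}(A)$, $\|(\bmu,s)\|=\ell^{\mathsf{a}}(B)$, and $\|(\bnu,t)\|=\ell^{\mathsf{a}}(AB)$. So the inequality to prove reduces to
\[
\ell^{\mathsf{a}}(AB)\le \ell^{\mathsf{a}}(A)+\ell^{\mathsf{a}}(B).
\]

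Next, I would establish this subadditivity of $\ell^{\mathsf{a}}$ directly from the definition. Write $A=\mathsf{r}_1\cdots \mathsf{r}_p$ and $B=\mathsf{s}_1\cdots \mathsf{s}_q$ as reduced expressions with $\mathsf{r}_i,\mathsf{s}_j\in\mathcal{T}_n$ and $p=\ell^{\mathsf{a}}(A)$, $q=\ell^{\mathsf{a}}(B)$; these exist by Lemma \ref{lem:generate}. Concatenating yields $AB=\mathsf{r}_1\cdots \mathsf{r}_p\mathsf{s}_1\cdots \mathsf{s}_q$, an expression of $AB$ in $p+q$ reflections from $\mathcal{T}_n$. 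By definition of $\ell^{\mathsf{a}}$, this gives $\ell^{\mathsf{a}}(AB)\le p+q=\ell^{\mathsf{a}}(A)+\ell^{\mathsf{a}}(B)$. (Alternatively, one could invoke Lemma \ref{lem:equal-length} to replace $\ell^{\mathsf{a}}$ by $\ell$ throughout and then appeal to Lemma \ref{lem:length-property}(2); the resulting bound is identical.)

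Combining the two steps yields $\|(\bnu,t)\|\le \|(\bla,k)\|+\|(\bmu,s)\|$. There is no real obstacle here; the content of the lemma is entirely packaged in Proposition \ref{prop:equal-length-2}, which expresses the combinatorial quantity $\|(\bla,k)\|$ as a genuine word length with respect to a generating set. Once that translation is made, subadditivity follows formally from concatenation of reduced expressions, exactly as for any word length on a finitely generated group.
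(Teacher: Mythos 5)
Your proposal is correct and follows exactly the route the paper intends: the paper states that the lemma is ``straightforward'' from Proposition~\ref{prop:equal-length-2}, which is precisely your first step, and the subadditivity $\ell^{\mathsf{a}}(AB)\le \ell^{\mathsf{a}}(A)+\ell^{\mathsf{a}}(B)$ that you supply by concatenating reduced words is the implicit remaining ingredient. Your write-up simply makes explicit what the paper leaves to the reader.
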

%

\subsection{Stability in the center $\mathcal{A}_n(q)$ of $\mathbb{Z}[GA_{n}(q)]$}

In this subsection, we shall apply the general observation established in section \ref{subsect:general}  to the case $H_n=GA_n(q)$ for $n\geq 0$ to prove the stability property for the center  center $\mathcal{A}_n(q)$ of $\mathbb{Z}[GA_{n}(q)]$. To simplify notations, write $\mathfrak{p}_{AB}^C=b_{h_1,h_2}^{h_3}$ for any $h_1=A, h_2=B, h_3=C\in GA_n(q)$.  We also write $A\thicksim B$ if $A$ is conjugate to $B$ in $GL_n(q)$ while writing  $A\thicksim_{\mathsf{a}} B$ if $A$ is conjugate to $B$ in $GA_n(q)$. Set $$\mathfrak{J}_{AB}^C=\{(P,Q)| P,Q\in GL_n(q), P\thicksim A, Q\thicksim B, PQ=C\}, $$ 
\begin{equation} \label{eq:mathfrakT}
\mathfrak{T}_{AB}^C=\{(P,Q)| P,A\in GA_n(q)P\thicksim_{\mathsf{a}} A, Q\thicksim_{\mathsf{a}} B, PQ=C\}.
\end{equation} Then clearly $\mathfrak{T}_{AB}^C\subset \mathfrak{J}_{AB}^C$ and hence 
\begin{equation}\label{eq:p-compute}
\mathfrak{p}_{AB}^C=\sharp~ \mathfrak{T}_{AB}^C\leq  \sharp~ \mathfrak{J}_{AB}^C=a^{C}_{AB}. 
\end{equation}  
Moreover if $A,B,C$ are of modified types $(\bla,k), (\bmu,s), (\bnu,t)$  in $GA_n(q)$, then by \eqref{eq:two-modified-type} $A,B,C$ are of modified types $\widehat{\bla}_{(k)}, \widehat{\bmu}_{(s)}, \widehat{\bnu}_{(t)}$  in $GL_n(q)$ and 
$$
\mathfrak{p}_{AB}^C=\mathfrak{p}_{(\bla,k),(\bmu,s)}^{(\bnu,t)}, \quad a^C_{AB}=a^{\widehat{\bnu}_{(t)}}_{\widehat{\bla}_{(k)},\widehat{\bmu}_{(s)}}. 
$$
{\color{black} Then by \eqref{eq:p-compute} we obtain
\begin{equation}\label{eq:p-a-compare}
\mathfrak{p}_{(\bla,k),(\bmu,s)}^{(\bnu,t)}(n)\leq a^{\widehat{\bnu}_{(t)}}_{\widehat{\bla}_{(k)},\widehat{\bmu}_{(s)}}(n). 
\end{equation}  }
\begin{lem}\label{lem:affine-sip}
  The family of general affine groups $GA_1(q)\subset GA_2(q)\subset GA_3(q)\subset \cdots$ satisfies the strictly increasing property.
\end{lem}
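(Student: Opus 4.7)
The plan is to unpack what ``strict increase'' means in terms of the sets $\mathfrak{T}^{h_3^{(m)}}_{h_1^{(m)} h_2^{(m)}}$ at successive levels and then to construct an explicit map from new pairs at level $n+m+1$ to new pairs at level $n+m+2$. Throughout, abbreviate $\mathfrak{p}(n+m) := \mathfrak{p}^{h_3^{(m)}}_{h_1^{(m)} h_2^{(m)}}(n+m)$.

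First I would identify the ``new'' pairs. The embedding $GA_{n+m}(q) \hookrightarrow GA_{n+m+1}(q)$, $M \mapsto M^{(1)} = \begin{bmatrix} M & 0 \\ 0 & 1 \end{bmatrix}$, induces an injection $\mathfrak{T}^{h_3^{(m)}}_{h_1^{(m)} h_2^{(m)}} \hookrightarrow \mathfrak{T}^{h_3^{(m+1)}}_{h_1^{(m+1)} h_2^{(m+1)}}$ via $(P_0, Q_0) \mapsto (P_0^{(1)}, Q_0^{(1)})$. A pair $(P, Q)$ in the target lies in the image if and only if both $P$ and $Q$ have block form $\begin{bmatrix} * & 0 \\ 0 & 1 \end{bmatrix}$; the compatibility of $\sim_{\mathsf{a}}$-classes follows from Lemma~\ref{lem:modifed-inv} (same modified type), and the compatibility of products is automatic by block multiplication. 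Thus the hypothesis $\mathfrak{p}(n+m) < \mathfrak{p}(n+m+1)$ furnishes a ``new'' pair $(P, Q)$ at level $n+m+1$, meaning at least one of $P, Q$ has a nontrivial last row or last column as an $(n+m+1) \times (n+m+1)$ matrix.

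Next I would lift this new pair to level $n+m+2$ by a carefully chosen conjugation. Let $\sigma \in GA_{n+m+2}(q)$ be the permutation matrix that swaps coordinates $n+m+1$ and $n+m+2$. It lies in $GA$ because it fixes coordinate $1$ (since $n+m+1 \geq 2$), and it centralizes $h_3^{(m+2)} = \mathrm{diag}(h_3, I_{m+2})$ because both swapped coordinates lie inside the identity block $I_{m+2}$. Consequently
$$(\sigma P^{(1)} \sigma^{-1})(\sigma Q^{(1)} \sigma^{-1}) = \sigma (PQ)^{(1)} \sigma^{-1} = \sigma h_3^{(m+2)} \sigma^{-1} = h_3^{(m+2)},$$
and conjugation preserves $\sim_{\mathsf{a}}$-classes, so the conjugated pair belongs to $\mathfrak{T}^{h_3^{(m+2)}}_{h_1^{(m+2)} h_2^{(m+2)}}$ at level $n+m+2$.

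The crux is to verify that this conjugated pair is still new. Writing $P = \begin{bmatrix} P' & v \\ u & c \end{bmatrix}$ with $P'$ of size $(n+m) \times (n+m)$, a direct block computation yields
$$\sigma P^{(1)} \sigma^{-1} = \begin{bmatrix} P' & 0 & v \\ 0 & 1 & 0 \\ u & 0 & c \end{bmatrix},$$
whose last row and last column are trivial (so that the matrix sits in the embedded copy of $GA_{n+m+1}(q)$) if and only if $u = 0$, $v = 0$, and $c = 1$, i.e., if and only if $P$ itself already belonged to the embedded copy of $GA_{n+m}(q)$. The same analysis applies to $Q$. Therefore $(\sigma P^{(1)} \sigma^{-1}, \sigma Q^{(1)} \sigma^{-1})$ is new at level $n+m+2$ precisely when $(P, Q)$ was new at level $n+m+1$, which gives $\mathfrak{p}(n+m+1) < \mathfrak{p}(n+m+2)$, as required.

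The main technical point is the block calculation above: one needs a conjugating element that simultaneously lies in $GA_{n+m+2}(q)$, centralizes $h_3^{(m+2)}$, and faithfully records the obstruction to lying in the previous embedded layer. The swap of the last two coordinates satisfies all three constraints cleanly, and everything else is routine bookkeeping once this choice is made.
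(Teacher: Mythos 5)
Your proof is correct and follows essentially the same route as the paper: the paper likewise takes a ``new'' pair $(E,F)$ at level $n+m+1$ and inserts an identity coordinate between the last two positions to produce a pair at level $n+m+2$ outside the embedded image --- your matrix $\sigma P^{(1)}\sigma^{-1}$ is exactly the paper's $E'$. Your framing of the insertion as conjugation by the transposition $\sigma$ (which lies in $GA_{n+m+2}(q)$ and centralizes $C^{(m+2)}$) is a minor presentational improvement that makes membership of the new pair in $\mathfrak{T}^{C^{(m+2)}}_{A^{(m+2)}B^{(m+2)}}$ immediate, where the paper simply asserts it.
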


\begin{proof}
  Assume there exist $A,B,C\in GA_n(q)$ for some $n\geq 1$ such that 
  \begin{equation}\label{eq:affine-sip}
  \mathfrak{p}^{C^{(m)}}_{A^{(m)}B^{(m)}}\textless\mathfrak{p}_{A^{(m+1)}B^{(m+1)}}^{C^{(m+1)}}
  \end{equation}
  for some $m\geq 0$. Observe that by \eqref{eq:p-compute}
  \begin{align*}
\mathfrak{p}^{C^{(m)}}_{A^{(m)},B^{(m)}}=\#~\mathfrak{T}_{A^{(m)}B^{(m)}}^{C^{(m)}}, \quad 
\mathfrak{p}_{A^{(m+1)}B^{(m+1)}}^{C^{(m+1)}}=\#~\mathfrak{T}_{A^{(m+1)}B^{(m+1)}}^{C^{(m+1)}}, 
\end{align*}
where $$\mathfrak{T}_{A^{(m)}B^{(m)}}^{C^{(m)}}=\left\{(P,Q) \Big| P \thicksim_a A^{(m)}, Q\thicksim_a B^{(m)}, PQ=C^{(m)} \right\}$$ and  $$\mathfrak{T}_{A^{(m+1)}B^{(m+1)}}^{C^{(m+1)}}=\left\{(P',Q') \Big| P'\thicksim_a \begin{bmatrix} A^{(m)}&0\\0&1 \end{bmatrix}, Q'\thicksim_a \begin{bmatrix} B^{(m)}&0\\0&1 \end{bmatrix}, P'Q'=\begin{bmatrix}C^{(m)}&0\\0&1\end{bmatrix} \right\}.$$ Clearly there is an embedding $\mathfrak{T}_{A^{(m)}B^{(m)}}^{C^{(m)}}\subseteq \mathfrak{T}_{A^{(m+1)}B^{(m+1)}}^{C^{(m+1)}}$  by sending the pair $(P,Q)$ to the pair $(P^{(1)},Q^{(1)})$. Then by \eqref{eq:affine-sip} we obtain that there exists a pair $(E,F)\in \mathfrak{T}_{A^{(m+1)}B^{(m+1)}}^{C^{(m+1)}}$ such that it is not of the form $\left(\begin{bmatrix}P&0\\0&1\end{bmatrix},\begin{bmatrix}Q&0\\0&1\end{bmatrix}\right)$ with $(P,Q)\in \mathfrak{T}_{A^{(m)}B^{(m)}}^{C^{(m)}}$. 
We write $E, F\in GA_{m+1}(q)\subset GL_{m+1}(q)$ in the block form as 
$E=\begin{bmatrix}E_{11}&f\\h&e\end{bmatrix}$
$F=\begin{bmatrix}F_{11}&f^{'}\\h^{'}&e^{'}\end{bmatrix}$, 
  then at least one of $f,h, f',h'\in \Fq$ is nonzero. Now set $E^{'}= \begin{bmatrix}E_{11}&0&f\\0&1&0\\h&0&e\end{bmatrix}, F'=\begin{bmatrix}F_{11}&0&f^{'}\\0&1&0\\h^{'}&0&e^{'}\end{bmatrix}$. Then $(E',F')\in \mathfrak{T}_{A^{(m+2)}B^{(m+2)}}^{C^{(m+2)}}$.  However at least one of $f,h, f',h'\in \Fq$ is nonzero,  clearly the pair $(E',F')$ does not belong to the image of embedding $\mathfrak{T}_{A^{(m+1)}B^{(m+1)}}^{C^{(m+1)}}\subset \mathfrak{T}_{A^{(m+2)}B^{(m+2)}}^{C^{(m+2)}}$. This leads to   $\mathfrak{p}^{C^{(m+1)}}_{A^{(m+1)}B^{(m+1)}}\textless\mathfrak{p}_{A^{(m+2)}B^{(m+2)}}^{C^{(m+2)}}$.  Hence the lemma is proved. 
  \end{proof}

Recall that for  $(\bla,k)\in\widehat{\mathscr{P}}^{\mathsf{a}}(\Phi_q)$ the element ${P}_{(\bla, k)}(n)$ is  the class sum of elements in $ GA_{n}(q)$  of modified type $(\bla, k)$ if $\|\bmu\|+\ell(\bmu^e)\leq n-1$ when $l\neq 1$ and $\|\bmu\|+\ell(\bmu^e)\leq n-2$ when $l=1$ and ${P}_{(\bla, k)}(n)=0$ otherwise. We write 
$${P}_{(\bla, k)}(n) {P}_{(\bmu, s)}(n)=\sum_{(\bnu,t)} \mathfrak{p}_{(\bla, k),(\bmu, s)}^{(\bnu, t)}(n)~ {P}_{(\bnu, t)}(n) \left( \text{in }\mathcal{A}_n(q)\right), $$
where $\mathfrak{p}_{(\bla, k),(\bmu, s)}^{(\bnu, t)}(n)$ is the structure constant. Here we take the convention that $\mathfrak{p}_{(\bla, k),(\bmu, s)}^{(\bnu, t)}(n)$ is well defined only for these admissible $n\geq 1$ such that all three of ${P}_{(\bla, k)}(n), {P}_{(\bmu, s)}(n), {P}_{(\bnu, t)}(n)$ are nonzero. 
\begin{thm}\label{thm:indep} The following holds for $(\bla,k), (\bmu,s), (\bnu,t)\in\widehat{\mathscr{P}}^{\mathsf{a}}(\Phi_q)$ and $n\geq 1$: 

$(1)$ $\mathfrak{p}_{(\boldsymbol{\lambda}, k),(\bmu, s)}^{(\bnu, t)}(n)=0$ unless $\|(\boldsymbol{\bnu}, t)\| \leq\|(\boldsymbol{\lambda}, k)\|+\|(\boldsymbol{\bmu}, s)\|$.

$(2)$ If $\|(\boldsymbol{\bnu}, t)\|=\|(\boldsymbol{\lambda}, k)\|+\|(\boldsymbol{\bmu}, s)\|$, then $\mathfrak{p}_{({\bla}, k),(\bmu, s)}^{(\bnu, t)}(n)$ is independent of $n$.
{\em (In this case, we shall write $\mathfrak{p}^{(\bnu,t)}_{(\bla,k),(\bmu,s)}(n)$ as $\mathfrak{p}^{(\bnu,t)}_{(\bla,k),(\bmu,s)} \in \N$.)}
\end{thm}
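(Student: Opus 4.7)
The plan is to derive Theorem~\ref{thm:indep} directly from the general stability machinery developed in Section~\ref{subsect:general}, applied to the tower $GA_1(q)\subset GA_2(q)\subset\cdots$. All of the heavy lifting has already been done: the length identification ($\ell^{\mathsf{a}}=\ell$), the strictly increasing property of the tower, the length formula in terms of modified type, and the stability theorem for $GL_n(q)$.

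\medskip

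\textbf{Part (1).} This follows immediately from Lemma~\ref{lem:affine-length-ineq}. If $\mathfrak{p}^{(\bnu,t)}_{(\bla,k),(\bmu,s)}(n)\neq 0$, then there exist $A\in\mathscr{K}_{(\bla,k)}(n)$ and $B\in\mathscr{K}_{(\bmu,s)}(n)$ with $AB\in\mathscr{K}_{(\bnu,t)}(n)$, so $\|(\bnu,t)\|\leq \|(\bla,k)\|+\|(\bmu,s)\|$. No further work is needed.

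\medskip

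\textbf{Part (2).} Fix representatives $A_0\in\mathscr{K}_{(\bla,k)}(n)$, $B_0\in\mathscr{K}_{(\bmu,s)}(n)$, and $C_0\in\mathscr{K}_{(\bnu,t)}(n)$ for some admissible $n$ (recalling the support conditions $\|\cdot\|+\ell((\cdot)^e)\leq n-1$ or $n-2$ from Lemma~\ref{lem:Kbmu}). In the notation of Section~\ref{subsect:general} with $H_n=GA_n(q)$, one has
$$\mathfrak{p}^{(\bnu,t)}_{(\bla,k),(\bmu,s)}(n)=\mathfrak{p}^{C_0}_{A_0 B_0}.$$
The key point is that the embedding $X\mapsto X^{(m)}$ in \eqref{eq:embedding-h} preserves the modified type, as noted just before Lemma~\ref{lem:modifed-inv}. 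Hence $A_0^{(m)}, B_0^{(m)}, C_0^{(m)}\in GA_{n+m}(q)$ lie in the classes indexed by the same triple $((\bla,k),(\bmu,s),(\bnu,t))$, and therefore
$$\mathfrak{p}^{C_0^{(m)}}_{A_0^{(m)} B_0^{(m)}}=\mathfrak{p}^{(\bnu,t)}_{(\bla,k),(\bmu,s)}(n+m)\quad\text{for all } m\geq 0.$$
Under the hypothesis $\|(\bnu,t)\|=\|(\bla,k)\|+\|(\bmu,s)\|$, Proposition~\ref{prop:equal-length-2} combined with Lemma~\ref{lem:equal-length} gives
$$\ell(C_0)=\ell^{\mathsf{a}}(C_0)=\|(\bnu,t)\|=\|(\bla,k)\|+\|(\bmu,s)\|=\ell^{\mathsf{a}}(A_0)+\ell^{\mathsf{a}}(B_0)=\ell(A_0)+\ell(B_0),$$
which is exactly the length condition required by Proposition~\ref{prop:general stability}. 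Since the family $\{GA_n(q)\}$ satisfies the strictly increasing property by Lemma~\ref{lem:affine-sip}, that proposition applies and yields
$$\mathfrak{p}^{C_0}_{A_0 B_0}=\mathfrak{p}^{C_0^{(m)}}_{A_0^{(m)} B_0^{(m)}}\quad\text{for all } m\geq 0.$$
Combining the last two displays shows that $\mathfrak{p}^{(\bnu,t)}_{(\bla,k),(\bmu,s)}(n+m)$ is constant in $m$, hence independent of the starting $n$ (for all admissible $n$). This common nonnegative integer value is defined to be $\mathfrak{p}^{(\bnu,t)}_{(\bla,k),(\bmu,s)}\in\mathbb{N}$.

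\medskip

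The main subtlety, rather than a real obstacle, is purely bookkeeping: one must start from an $n$ large enough that all three class sums $P_{(\bla,k)}(n), P_{(\bmu,s)}(n), P_{(\bnu,t)}(n)$ are simultaneously nonzero, so that Proposition~\ref{prop:general stability} may be invoked; once this is done, the stability propagates up the tower automatically via the preservation of modified type under the embeddings \eqref{eq:embed-aff}. All the genuinely nontrivial inputs — the length equality $\ell^{\mathsf{a}}=\ell$, the strictly increasing property, and the stability of the $GL_n(q)$ structure constants from \cite{WW19} via \eqref{eq:p-a-compare} — are already in place.
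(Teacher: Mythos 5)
Your proposal is correct and follows essentially the same route as the paper: part (1) via the subadditivity of $\|\cdot\|$ on modified types (Lemma~\ref{lem:affine-length-ineq}), and part (2) by fixing representatives at the smallest admissible rank, translating the structure constants into the $\mathfrak{p}^{C^{(m)}}_{A^{(m)}B^{(m)}}$ of Section~\ref{subsect:general}, and invoking Proposition~\ref{prop:general stability} together with Lemma~\ref{lem:affine-sip}, Lemma~\ref{lem:equal-length} and Proposition~\ref{prop:equal-length-2}. The bookkeeping point you flag (starting from an $n$ where all three class sums are nonzero) is exactly how the paper handles it, via the minimal such $n_0$.
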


\begin{proof}
$(1)$ . If $\mathfrak{p}_{(\bla, k),(\mu, s)}^{(\bnu, t)}(n)\neq 0$, then there exists $ g\in \mathscr{K}_{(\bla, k)}(n)$,  $h \in \mathscr{K}_{(\bmu, s)}(n)$ such that $gh\in \mathscr{K}_{(\bnu, t)}(n)$, then by Lemma 3.1, Proposition 3.4 and Lemma 3.6 we have 
$\|(\boldsymbol{\bnu}, t)\| =\ell^{\mathsf{a}}(gh)\leq\ell^{\mathsf{a}}(g)+\ell^{\mathsf{a}}(h)=\|(\boldsymbol{\bla}, k)\|+\|(\boldsymbol{\bmu}, s)\|$.

$(2)$ Let $n_0$ be the smallest positive integer such that ${P}_{(\bla, k)}(n_0)\neq 0,  {P}_{(\bmu, s)}(n_0)\neq 0$ and ${P}_{(\bnu, t)}(n_0)\neq 0$.  Then $\mathfrak{p}^{(\bnu,t)}_{(\bla,k),(\bmu,s)}(n)$ is well-defined for $n\geq n_0$. Take 
$A,B,C\in GA_{n_0}(q)$ such that their modified types are $(\bla,k), (\bmu,s)$ and $(\bnu,t)$, respectively. Then by Lemma \ref{lem:equal-length} and Proposition \ref{prop:equal-length-2} we have $\ell(A)=\ell^{\mathsf{a}}(A)=\|(\bla,k)\|$, $\ell(B)=\ell^{\mathsf{a}}(B)=\|(\bmu,s)\|$, $\ell(C)=\ell^{\mathsf{a}}(C)=\|(\bnu,t)\|$ and hence by the assumption $\|(\boldsymbol{\nu}, t)\|=\|(\boldsymbol{\lambda}, k)\|+\|(\boldsymbol{\mu}, s)\|$ we obtain $\ell(C)=\ell(A)+\ell(B)$. Then  for each $n\geq n_0$ by Proposition \ref{prop:general stability} and Lemma \ref{lem:affine-sip} we have 
$$
\mathfrak{p}_{({\lambda}, k),(\mu, s)}^{(\nu, t)}(n)=\mathfrak{p}^{C^{(n-n_0)}}_{A^{(n-n_0)}B^{(n-n_0)}}=\mathfrak{p}^C_{AB}=\mathfrak{p}_{({\lambda}, k),(\mu, s)}^{(\nu, t)}(n_0). 
$$
 Thus
$\mathfrak{p}_{(\lambda, k),(\mu, s)}^{(\nu, t)}(n)$ is a constant independent of $n$.
\end{proof}

\subsection{The stable center}
Analogous to \cite[Section 3.4]{WW19}, we can also introduce the so-called stable center for the affine groups $GA_n(q)$. 
Let $\mathcal{A}^{(m)}_n(q)$ be the subspace of $\mathcal{A}_n(q)$ spanned by the elements ${P}_{(\bla,k)}(n)$ with $\|(\bla,k)\|\leq m$. Due to Lemma~ \ref{lem:affine-length-ineq}, the assignment of degree $\|(\bla,k)\|$ to ${P}_{(\bla,k)}(n)$
provides $\mathcal{A}_n(q)$ a filtered ring structure with the filtration $0\subset\mathcal{A}^{(0)}_n(q)\subset \mathcal{A}^{(1)}_n(q)\subset\mathcal{A}^{(2)}_n(q)\subset\cdots\subset \mathcal{A}_n(q)$.  Then we can define the associated graded algebra denoted by $\mathcal{G}_n(q)$ as follows. As a vector space $$\mathcal{G}_n(q)=\oplus_{i\geq 0}(\mathcal{A}^{(i)}_n(q)/\mathcal{A}^{(i-1)}_n(q))$$ where we set $\mathcal{A}^{(-1)}_n(q)=0$ and the multiplication satisfies $(x+\mathcal{A}^{(i-1)}_n(q))(y+\mathcal{A}^{(j-1)}_n(q))=xy+\mathcal{P}_{i+j-1}$ for $x\in \mathcal{A}^{(i)}_n(q), y\in\mathcal{A}^{(j)}_n(q)$ and $i,j\geq 0$.
Meanwhile, let $\mathcal{G}(q)$  be a graded associative $\mathbb{Z}$-algebra with a basis given by the symbols ${P}_{(\bla,k)}$ indexed by $(\bla,k)\in\widehat{\mathscr{P}}^{\mathsf{a}}(\Phi_q)$,
and with multiplication
given by
\begin{equation}
  \label{eq:KKaK}
{P}_{(\bla,k)} {P}_{(\bmu,s)}=\sum_{\|(\bnu,t)\|=\|(\bla,k)\|+\|(\bmu,s)\|}\mathfrak{p}^{(\bnu,t)}_{(\bla,k)(\bmu,s)}P_{(\bnu,t)}.
\end{equation}
Note ${P}_{(\emptyset,0)}$ is the unit of $\mathcal{A}(q)$.
The following summarizes the above discussions.

\begin{thm} \label{thm:stable}
The graded $\mathbb{Z}$-algebra $\mathcal{G}_n(q)$ has the multiplication given by
$$
{P}_{(\bla,k)}(n) {P}_{(\bmu,s)}(n)=\sum_{\|(\bnu,t)\|=\|(\bla,k)\|+\|(\bmu,s)\|}\mathfrak{p}^{(\bnu,t)}_{(\bla,k)(\bmu,s)}P_{(\bnu,t)}(n),
$$
for $(\bla,k)\in\widehat{\mathscr{P}}^{\mathsf{a}}(\Phi_q)$. 
Moreover, we have a surjective algebra homomorphism $\mathcal{G}(q) \twoheadrightarrow\mathcal{G}_n(q)$
for each $n$, which maps ${P}_{(\bla,k)}$ to ${P}_{(\bla,k)}(n)$ for all $(\bla,k)\in\widehat{\mathscr{P}}^{\mathsf{a}}(\Phi_q)$. 
\end{thm}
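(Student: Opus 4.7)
The plan is to deduce this theorem as an essentially formal consequence of Theorem~\ref{thm:indep} together with the fact, from Lemma~\ref{lem:Kbmu}, that the nonzero class sums $P_{(\bla,k)}(n)$ form a $\mathbb{Z}$-basis of $\mathcal{A}_n(q)$. Since the filtration on $\mathcal{A}_n(q)$ was already shown via Lemma~\ref{lem:affine-length-ineq} to be compatible with the degree $\|(\bla,k)\|$, no further filtration work is required.

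First I would verify the multiplication in $\mathcal{G}_n(q)$. In $\mathcal{A}_n(q)$ one has
\[
P_{(\bla,k)}(n)\,P_{(\bmu,s)}(n) = \sum_{(\bnu,t)} \mathfrak{p}^{(\bnu,t)}_{(\bla,k),(\bmu,s)}(n)\,P_{(\bnu,t)}(n),
\]
and by part~(1) of Theorem~\ref{thm:indep} only indices with $\|(\bnu,t)\| \leq \|(\bla,k)\| + \|(\bmu,s)\|$ contribute. In the associated graded, the symbol $P_{(\bla,k)}(n)$ has degree $\|(\bla,k)\|$; terms with $\|(\bnu,t)\| < \|(\bla,k)\| + \|(\bmu,s)\|$ lie in a strictly lower filtration piece and therefore vanish. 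This leaves precisely the top-degree summands, whose coefficients coincide with the $n$-independent constants $\mathfrak{p}^{(\bnu,t)}_{(\bla,k),(\bmu,s)}$ of part~(2) of Theorem~\ref{thm:indep}, yielding the claimed formula.

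Next I would define the map $\varphi_n:\mathcal{G}(q) \to \mathcal{G}_n(q)$ as the $\mathbb{Z}$-linear extension of $P_{(\bla,k)} \mapsto P_{(\bla,k)}(n)$, using the convention $P_{(\bla,k)}(n)=0$ whenever $\mathscr{K}_{(\bla,k)}(n)$ is empty. That $\varphi_n$ respects multiplication follows immediately from comparing \eqref{eq:KKaK} with the formula derived in the previous paragraph: both are controlled by the same universal constants $\mathfrak{p}^{(\bnu,t)}_{(\bla,k),(\bmu,s)}$. The only point to verify is that this remains consistent on basis elements for which one of $P_{(\bla,k)}(n)$, $P_{(\bmu,s)}(n)$, $P_{(\bnu,t)}(n)$ is zero; but then either the product is trivially zero in $\mathcal{G}_n(q)$, or the vanishing summand on the right-hand side contributes nothing, so the equation holds automatically. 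Finally, surjectivity is immediate because $\{P_{(\bla,k)}(n) \neq 0\}$ is a $\mathbb{Z}$-basis of $\mathcal{A}_n(q)$ by Lemma~\ref{lem:Kbmu}, hence of the associated graded $\mathcal{G}_n(q)$, and every such basis element is in the image of $\varphi_n$.

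The proof is therefore almost entirely bookkeeping; the substance has already been done in Theorem~\ref{thm:indep}. The only place requiring a little care is the consistency of the homomorphism on ``boundary'' indices where class sums vanish for small $n$, but this is handled by the convention above together with the length inequality of Lemma~\ref{lem:affine-length-ineq}, which ensures that any disappearing summand on the right-hand side of the multiplication formula comes from a genuinely empty conjugacy class.
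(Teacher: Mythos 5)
Your proposal is correct and follows essentially the same route as the paper, which presents Theorem~\ref{thm:stable} with no separate argument beyond the remark that it ``summarizes the above discussions'': the multiplication formula is exactly Theorem~\ref{thm:indep} read in the associated graded (lower-degree terms die), and the homomorphism and its surjectivity come from the stable constants in \eqref{eq:KKaK} together with the basis statement of Lemma~\ref{lem:Kbmu}. Your explicit attention to the boundary cases where some $P_{(\bla,k)}(n)$ vanishes is, if anything, slightly more careful than the paper's own treatment.
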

We will refer to $\mathcal{G}(q)$ as the {\em stable center} associated to the family of finite general linear groups.
This algebra can be viewed as the inverse limit of the projective system of algebras $\{\mathcal{G}_n(q)\}_{n\ge 1}$.
}
{\color{black}
\section{Computation on structure constants in the center $\mathcal{A}_n(q)$}\label{sec:affine reflection}
In this section, we shall compute some examples for the structure constants $\mathfrak{p}_{(\bla,k),(\bmu,s)}^{(\bnu,t)}(n)$. 
For $r\ge 1$ and $f\in\Phi_q$, we define the {\em single cycles} $(r)_f \in \mathcal{P}(\Phi_q)$
by letting $(r)_f(f)=(r)$ and $(r)_f(f')=\emptyset$ for $f'\neq f$.
Call $(r)_f$ a $r$-cycle of degree $d(f)$.
Denote by $\Fq^*= \Fq\backslash \{0\}$. For convenience, for each $A\in GA_n(q)$ we shall denote by $[\![A]\!]_{\mathsf{a}}$ the class sum in $GA_n(q)$ and 
denote by $[\![A]\!]$ the class sum corresponding in $GL_n(q)$ corresponding to $A$, respectively.

\subsection{Coincidence between  $\mathfrak{p}_{(\bla,0),(\bmu,0)}^{(\bnu,0)}$ and $a_{\bla\bmu}^{\bnu}$}

\begin{lem} \label{lem:ex-1}
Suppose $\xi, \zeta\in\mathbb{F}_q\backslash\{0,1\}$. Let $\bla=(1)_{t-\xi},\bmu=(1)_{t-\zeta}$. Then 
 \begin{align*}
 \mathfrak{p}_{(\bla,0),(\bmu,0)}^{(\bla\cup\bmu,0)}=
 \left\{\begin{array}{cc}
 q^2+q,& \text{ if }\xi=\zeta,\\
 2q-1,& \text{ if }\xi\neq \zeta.
 \end{array}
 \right. 
 \end{align*}
\end{lem}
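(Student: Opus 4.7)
The plan is to evaluate $\mathfrak{p}_{(\bla,0),(\bmu,0)}^{(\bla\cup\bmu,0)}$ by a direct enumeration of the pairs $(P,Q)\in\mathscr{K}_{(\bla,0)}(n)\times\mathscr{K}_{(\bmu,0)}(n)$ with $PQ=C_0$ for a fixed $C_0\in\mathscr{K}_{(\bla\cup\bmu,0)}(n)$. By Theorem~\ref{thm:indep} this count is independent of $n$, so it suffices to work in the smallest admissible case $n=3$. I will take $C_0=J_{(\bla\cup\bmu,0)}=\left[\begin{smallmatrix}1 & 0\\ 0 & g_C\end{smallmatrix}\right]\in GA_3(q)$, with $g_C=\xi I_2$ if $\xi=\zeta$ and $g_C\sim\diag(\xi,\zeta)$ in $GL_2(q)$ if $\xi\neq\zeta$.

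Writing $P=\left[\begin{smallmatrix}1 & 0\\ \alpha_P & g_P\end{smallmatrix}\right]$ and $Q=\left[\begin{smallmatrix}1 & 0\\ \alpha_Q & g_Q\end{smallmatrix}\right]$, the equation $PQ=C_0$ splits into $g_Pg_Q=g_C$ in $GL_2(q)$ together with $\alpha_P=-g_P\alpha_Q$ in $\Fq^{2}$. The modified-type requirements translate into $g_P\sim\diag(\xi,1)$ and $g_Q\sim\diag(\zeta,1)$ in $GL_2(q)$, plus $\alpha_P\in\text{Im}(I_2-g_P)$ and $\alpha_Q\in\text{Im}(I_2-g_Q)$ (which encode the $k=0$ labels, cf.\ the proof of Proposition~\ref{prop:conjugate-aff}). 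I will count the $(g_P,g_Q)$ and the $(\alpha_P,\alpha_Q)$ separately.

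For the $(g_P,g_Q)$-count, $\det(g_P^{-1}g_C)=\zeta$ is automatic, so the condition $g_P^{-1}g_C\sim\diag(\zeta,1)$ reduces to the single trace equation $\text{tr}(g_P^{-1}g_C)=1+\zeta$. Parameterizing $g_P=\left[\begin{smallmatrix}a & b\\ c & d\end{smallmatrix}\right]$ with $a+d=1+\xi$ and $ad-bc=\xi$, this becomes $a(\zeta-\xi)=\xi(\zeta-\xi)$. When $\xi\neq\zeta$ it pins down $a=\xi$, $d=1$, $bc=0$, yielding $2q-1$ matrices (either $b=0$ or $c=0$); when $\xi=\zeta$ the equation is trivial and every $g_P$ in its conjugacy class (of size $q(q+1)$) qualifies.

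For the $(\alpha_P,\alpha_Q)$-count, the crucial observation is that since $\xi,\zeta\neq 1$ and $g_P,g_Q$ each have the eigenvalue $1$ with multiplicity one, $\text{Im}(I_2-g_P)$ is the $\xi$-eigenspace $V_\xi$ of $g_P$ and $\text{Im}(I_2-g_Q)$ is the $\zeta$-eigenspace $W_\zeta$ of $g_Q$. Since $g_P$ preserves $V_\xi$, the two constraints combine to $\alpha_Q\in V_\xi\cap W_\zeta$; I will show this intersection is trivial by noting that any nonzero $v\in V_\xi\cap W_\zeta$ would satisfy $g_Cv=g_Pg_Qv=\xi\zeta v$, which is impossible since $\xi\zeta\notin\{\xi,\zeta\}$ precisely because $\xi,\zeta\neq 1$. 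Thus each admissible $(g_P,g_Q)$ admits the unique pair $(\alpha_P,\alpha_Q)=(0,0)$, so multiplying gives $q(q+1)=q^2+q$ when $\xi=\zeta$ and $2q-1$ when $\xi\neq\zeta$. The main obstacle is precisely this last geometric argument: recognizing $\text{Im}(I-g)$ as an eigenspace and ruling out $V_\xi\cap W_\zeta\neq 0$ via the spectrum of the fixed product $g_C$; the rest is routine matrix algebra once the block decomposition is in place.
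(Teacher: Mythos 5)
Your proposal is correct, and all the key steps check out: the reduction to $n=3$ via Theorem~\ref{thm:indep}, the identification of the modified-type conditions with $g_P\sim\diag(\xi,1)$, $g_Q\sim\diag(\zeta,1)$, $\alpha_P\in\text{Im}(I_2-g_P)$, $\alpha_Q\in\text{Im}(I_2-g_Q)$ (this is exactly the $k=0$ criterion extracted from the proof of Proposition~\ref{prop:conjugate-aff}), the eigenvalue argument forcing $\alpha_P=\alpha_Q=0$, and the trace computation $a(\zeta-\xi)=\xi(\zeta-\xi)$ giving $q(q+1)$ solutions when $\xi=\zeta$ and $2q-1$ when $\xi\neq\zeta$. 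The route differs from the paper's in how the two halves of the count are handled. The paper fixes explicit $3\times 3$ representatives $A,B,C$, substitutes $M=P^{-1}$, and extracts the conditions $b_1=b_2=0$ together with the trace and determinant equations entry-by-entry from the rank-one requirements $\operatorname{rank}(M-I)=\operatorname{rank}(MC-I)=1$; the vanishing of the affine part is thus a byproduct of an explicit matrix computation. You instead split $PQ=C_0$ into the $GL_2$ equation $g_Pg_Qg_C^{-1}=I$ and the translation equation $\alpha_P=-g_P\alpha_Q$, recognize $\text{Im}(I_2-g)$ as the non-unipotent eigenspace, and kill the translation part by observing that a nonzero vector in $V_\xi\cap W_\zeta$ would force $\xi\zeta$ to be an eigenvalue of $g_C$, which $\xi,\zeta\neq 1$ forbids. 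Your version is coordinate-free where the paper's is computational, and it scales better to longer types (compare Proposition~\ref{prop:ex-3}, where the paper again resorts to explicit entries); the paper's version has the minor advantage of needing no discussion of diagonalizability or eigenspace decompositions, since everything is read off from ranks. Both yield the same counts.
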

 \begin{proof}
 Observe that $\|(\bla,0)\|+\|(\bmu,0)\|=\|(\bla\cup\bmu,0)\|$ and moreover $P_{(\bla\cup\bmu,0)}(n)\neq 0$ if and only if $n\geq 3$. 
  Hence by Theorem \ref{thm:indep}, the structure constant $\mathfrak{p}_{(\bla,0),(\bmu,0)}^{(\bla\cup\bmu,0)}(n)$ is independent of $n$ for $n\geq 3$. That means $\mathfrak{p}_{(\bla,0),(\bmu,0)}^{(\bla\cup\bmu,0)}=\mathfrak{p}_{(\bla,0),(\bmu,0)}^{(\bla\cup\bmu,0)}(3)$. Therefore we can do the computation in $GA_3(q)$. 
  
  Firstly,  assume $\xi=\zeta$. Then $\mathfrak{p}_{(\bla,0),(\bmu,0)}^{(\bla\cup\bmu,0)}$ is exactly the coefficient of $[\![C]\!]_{\mathsf{a}}$ in $[\![A]\!]_{\mathsf{a}} \cdot [\![B]\!]_{\mathsf{a}}$ with 
     \[
    A = \begin{bmatrix} 1 & 0 & 0 \\ 0 & \zeta & 0 \\ 0 & 0 & 1 \end{bmatrix}, \quad 
    B = \begin{bmatrix} 1 & 0 & 0 \\ 0 & \zeta & 0 \\ 0 & 0 & 1 \end{bmatrix}, \quad 
    C = \begin{bmatrix} 1 & 0 & 0 \\ 0 & \zeta & 0 \\ 0 & 0 & \zeta \end{bmatrix}.
    \]    
Equivalently, let $\Gamma_1=\left\{M \mid M \thicksim_{\mathsf{a}} A^{-1},\, MC \thicksim_{\mathsf{a}} B\right\}$. Then $\mathfrak{p}_{(\bla,0),(\bmu,0)}^{(\bla\cup\bmu,0)}=\sharp\Gamma_1$.  
Suppose   $M = \begin{bmatrix} 1 & 0 & 0 \\ b_1 & a_1 & a_2 \\ b_2 & a_3 & a_4 \end{bmatrix} \in\Gamma_1$. Then 
  
    \[
    M= \begin{bmatrix} 1 & 0 & 0 \\ b_1 & a_1 & a_2 \\ b_2 & a_3 & a_4 \end{bmatrix} \thicksim_{\mathsf{a}} 
    \begin{bmatrix} 1 & 0 & 0 \\ 0 & \zeta^{-1} & 0 \\ 0 & 0 & 1 \end{bmatrix},\quad 
    MC= \begin{bmatrix} 1 & 0 & 0 \\ b_1 & a_1 \zeta & a_2 \zeta \\ b_2 & a_3 \zeta & a_4 \zeta \end{bmatrix} \thicksim_{\mathsf{a}} 
    \begin{bmatrix} 1 & 0 & 0 \\ 0 & \zeta & 0 \\ 0 & 0 & 1 \end{bmatrix}.
    \]
A direct calculation gives rise to    
    \[
    a_1 + a_4 = \zeta^{-1} + 1, \quad a_1a_4 - a_2a_3 = \zeta^{-1},  \quad \operatorname{rank}(M-I) =1, \quad \operatorname{rank}(MC-I) =1. 
    \]
Hence we obtain    
\begin{equation}\label{eq:Gamma-1}
b_1=b_2=0,  a_1 + a_4 = \zeta^{-1} + 1, \quad a_1a_4 - a_2a_3 = \zeta^{-1}. 
\end{equation} 
 This means $M$ is of the form 
 \begin{equation}\label{eq:ex-1}
 M = \begin{bmatrix} 1 & 0 & 0 \\ 0 & a_1 & a_2 \\ 0 & a_3 & a_4 \end{bmatrix} 
 \end{equation}
 satisfying \eqref{eq:Gamma-1}. Conversely, it is straightforward to check by Proposition \ref{prop:conjugate-aff} that an matrix of the form \eqref{eq:ex-1} satisfying \eqref{eq:Gamma-1} 
must belong to $\Gamma_1$. Thus    
    \[
  \mathfrak{p}_{(\bla,0),(\bmu,0)}^{(\bla\cup\bmu,0)}=\sharp\Gamma_1 =\sharp \left\{ M = \begin{bmatrix} 1 & 0 & 0 \\ 0 & a_1 & a_2 \\ 0 & a_3 & a_4 \end{bmatrix} \;\Bigg|\; 
    \begin{aligned} 
    a_1 + a_4 &= \zeta^{-1} + 1 \\ 
    a_1a_4 - a_2a_3 &= \zeta^{-1} 
    \end{aligned} \right\} = q^{2} + q.
    \]

   Secondly, assume $\xi\neq \zeta$. 
    Similarly, $\mathfrak{p}_{(\bla,0),(\bmu,0)}^{(\bla\cup\bmu,0)}$ is exactly the coefficient of $[\![C]\!]_{\mathsf{a}}$ in $[\![A]\!]_{\mathsf{a}} \cdot [\![B]\!]_{\mathsf{a}}$ with 
        \[
    A = \begin{bmatrix} 1 & 0 & 0 \\ 0 & \xi & 0 \\ 0 & 0 & 1 \end{bmatrix}, \quad 
    B = \begin{bmatrix} 1 & 0 & 0 \\ 0 & \zeta & 0 \\ 0 & 0 & 1 \end{bmatrix}, \quad 
    C = \begin{bmatrix} 1 & 0 & 0 \\ 0 & \xi & 0 \\ 0 & 0 & \zeta \end{bmatrix}.
    \] 
Equivalently, let $\Gamma_2=\left\{M \mid M \thicksim_{\mathsf{a}} A^{-1},\, MC \thicksim_{\mathsf{a}} B\right\}$. Then $\mathfrak{p}_{(\bla,0),(\bmu,0)}^{(\bla\cup\bmu,0)}=\sharp\Gamma_2$.  
Suppose   $M = \begin{bmatrix} 1 & 0 & 0 \\ b_1 & a_1 & a_2 \\ b_2 & a_3 & a_4 \end{bmatrix} \in\Gamma_2$. Then         
    \[
    M = \begin{bmatrix} 1 & 0 & 0 \\ b_1 & a_1 & a_2 \\ b_2 & a_3 & a_4 \end{bmatrix} \thicksim_{\mathsf{a}} 
    \begin{bmatrix} 1 & 0 & 0 \\ 0 & \xi^{-1} & 0 \\ 0 & 0 & 1 \end{bmatrix},\quad 
    MC= \begin{bmatrix} 1 & 0 & 0 \\ b_1 & a_1 \xi & a_2 \zeta \\ b_2 & a_3 \xi & a_4 \zeta \end{bmatrix} \thicksim_{\mathsf{a}} 
    \begin{bmatrix} 1 & 0 & 0 \\ 0 & \zeta & 0 \\ 0 & 0 & 1 \end{bmatrix}.
    \]
This means    
    \[
    a_1 + a_4 = \xi^{-1} + 1, \, \ a_1a_4 - a_2a_3 = \xi^{-1},\, \operatorname{rank}(M-I) = 1,\, \operatorname{rank}(MC-I) = 1
    \]
and hence     
\begin{equation}\label{eq:Gamma-2}
b_1=b_2=0,  \quad a_1=\xi^{-1},\quad a_4=1, \quad a_2a_3=0
\end{equation}
 This means $M$ is of the form 
 \begin{equation}\label{eq:ex-2}
 M = \begin{bmatrix} 1 & 0 & 0 \\ 0 & \xi^{-1} & a_2 \\ 0 & a_3 & 1 \end{bmatrix}.
 \end{equation} Conversely, it is straightforward to check by Proposition \ref{prop:conjugate-aff} that an matrix of the form \eqref{eq:ex-2} satisfying \eqref{eq:Gamma-2} 
must belong to $\Gamma_2$. Thus    
    \[
   \mathfrak{p}_{(\bla,0),(\bmu,0)}^{(\bla\cup\bmu,0)}=\sharp\Gamma_2=  \sharp \left\{ M = \begin{bmatrix} 1 & 0 & 0 \\ 0 & \xi^{-1} & a_2 \\ 0 & a_3 & 1 \end{bmatrix} \;\Bigg|\; 
    a_2a_3 = 0 
    \right\} = 2q-1.
    \]

   \end{proof}
   
We observe that $\mathfrak{p}_{(\bla,0),(\bmu,0)}^{(\bnu,0)}$ in the above two examples coincide with the structure constant $a_{\bla\bmu}^{\bnu}$ given in  \cite[Theorem 4.4]{WW19}. 
  In fact, this observation holds in general concerning the comparison between the structure constants $a_{\bla\bmu}^{\bnu}$ and $\mathfrak{p}_{(\bla,0),(\bmu,0)}^{(\bnu,0)}$ in the situation $\|\bla\|+\|\bmu\|=\|\bnu\|$. 
   \begin{thm}\label{thm:coincide}
   Let $\bla,\bmu,\bnu\in\mathcal{P}(\Phi_q)$. Suppose $\|\bla\|+\|\bmu\|=\|\bnu\|$. Then 
   $$
  \mathfrak{p}_{(\bla,0),(\bmu,0)}^{(\bnu,0)}=a_{\bla\bmu}^{\bnu}. 
   $$
   \end{thm}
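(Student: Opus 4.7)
The plan is to reduce the identity to a purely geometric question about fixed subspaces. By Theorem~\ref{thm:WW19} and Theorem~\ref{thm:indep}, both sides are independent of $n$ under the length-additive hypothesis $\|\bla\|+\|\bmu\|=\|\bnu\|$, so it suffices to establish equality for any sufficiently large $n$ with the canonical representative $C:=J_{(\bnu,0)}=\begin{bmatrix} 1 & 0 \\ 0 & J_\bnu \end{bmatrix}\in GA_n(q)$. Note that $C$ has $GL$-modified type $\widehat{\bnu}_{(0)}=\bnu$ by \eqref{eq:two-modified-type}, so $C$ simultaneously represents both conjugacy classes. The trivial inequality $\mathfrak{p}_{(\bla,0),(\bmu,0)}^{(\bnu,0)}(n)\leq a_{\bla\bmu}^{\bnu}(n)$ comes directly from \eqref{eq:p-a-compare}, so only the reverse inequality requires work.

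I would show that every pair $(P,Q)\in GL_n(q)^2$ with $P\in\mathscr{K}_\bla(n)$, $Q\in\mathscr{K}_\bmu(n)$, and $PQ=C$ automatically lies in $GA_n(q)^2$ with $P\in\mathscr{K}_{(\bla,0)}(n)$ and $Q\in\mathscr{K}_{(\bmu,0)}(n)$. Since $\ell(P)=\|\bla\|$, $\ell(Q)=\|\bmu\|$, and $\ell(C)=\|\bnu\|=\|\bla\|+\|\bmu\|$, Lemma~\ref{lem:length-property}(3) applied to $(P,Q)$ yields $V_n^P\cap V_n^Q=V_n^C$. Since $Ce_1=e_1$, the vector $e_1$ lies in $V_n^C\subseteq V_n^P\cap V_n^Q$, forcing $Pe_1=Qe_1=e_1$; thus the first columns of $P,Q$ are both $e_1$.

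To pin down the first rows, I would apply Lemma~\ref{lem:length-property}(3) a second time to the transposed pair $(Q^T,P^T)$, whose product is $(PQ)^T=C^T$. Since any matrix is similar to its transpose, transposition preserves modified type and reflection length, so the length-additive hypothesis survives and gives $V_n^{Q^T}\cap V_n^{P^T}=V_n^{C^T}$. Because $C^Te_1=e_1$ as well, one obtains $P^Te_1=Q^Te_1=e_1$, i.e., the first rows of $P$ and $Q$ are $(1,0,\ldots,0)$. Combined with the first-column condition this forces $P=\begin{bmatrix} 1 & 0 \\ 0 & g_P \end{bmatrix}$ and $Q=\begin{bmatrix} 1 & 0 \\ 0 & g_Q \end{bmatrix}$, so $P,Q\in GA_n(q)$ with the bottom-left block $\alpha$ equal to $0$, trivially in $\operatorname{Im}(I-g)$. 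By Proposition~\ref{prop:conjugate-aff} together with \eqref{eq:two-modified-type}, the GA-modified types of $P,Q$ are then $(\bla,0)$ and $(\bmu,0)$ respectively, and $(P,Q)$ contributes to $\mathfrak{p}_{(\bla,0),(\bmu,0)}^{(\bnu,0)}(n)$.

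The main obstacle is the dual step in the previous paragraph: one must notice that the length-additive hypothesis for $(P,Q,C)$ automatically gives such a hypothesis for the transposed triple $(Q^T,P^T,C^T)$, so a second application of Lemma~\ref{lem:length-property}(3) delivers the first-row condition needed to land $P,Q$ inside $GA_n(q)$. The remaining pieces — the stability reduction, the first-column observation from $Ce_1=e_1$, and translating the geometric conclusion into GA-modified types — are routine once the canonical representative $C=J_{(\bnu,0)}$ has been singled out.
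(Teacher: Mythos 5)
Your proof is correct, but it reaches the key step --- showing that every $GL_n(q)$-factorization of the canonical representative $C=\begin{bmatrix}1&0\\0&J_{\bnu^{\uparrow n-1}}\end{bmatrix}$ already lives in $GA_n(q)$ --- by a genuinely different mechanism than the paper. The paper argues by counting: it compares the sets computing $a_{\bla\bmu}^{\bnu}(m)$ and $a_{\bla\bmu}^{\bnu}(m+1)$, notes that the block-diagonal embedding $(g,h)\mapsto(g^{(1)},h^{(1)})$ injects the first into the second, and invokes the $n$-independence of $a_{\bla\bmu}^{\bnu}(n)$ from Theorem~\ref{thm:WW19} to conclude this injection is a bijection, so every pair at level $m+1$ is forced to be block-diagonal; a permutation conjugation then moves the spare $1$ to the top-left corner. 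You instead deduce block-diagonality directly at a single $n$ from the geometry of length-additive factorizations: Lemma~\ref{lem:length-property}(3) applied to $(P,Q)$ pins down the first columns via $e_1\in V_n^C=V_n^P\cap V_n^Q$, and applied again to the transposed triple $(Q^\intercal,P^\intercal,C^\intercal)$ (legitimate, since a matrix is similar to its transpose, so modified type and reflection length are preserved) pins down the first rows. Your route buys a pointwise statement --- $\mathfrak{J}_{AB}^{C}=\mathfrak{T}_{AB}^{C}$ for this particular $C$ at any admissible $n$, with Theorem~\ref{thm:WW19} used only to name the stable constant --- whereas the paper's route avoids the transpose trick but needs the equality $a_{\bla\bmu}^{\bnu}(m)=a_{\bla\bmu}^{\bnu}(m+1)$ as the engine of the argument. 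Both correctly handle the easy inequality $\mathfrak{p}\le a$ via \eqref{eq:p-a-compare} and finish with the stability of $\mathfrak{p}$ from Theorem~\ref{thm:indep}.
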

   \begin{proof}
   Let $m=\max\{\|\bla\|+\ell(\bla(t-1)), \|\bmu\|+\ell(\bmu(t-1)),  \|\bnu\|+\ell(\bnu(t-1))\}$. Since  $\|\bla\|+\|\bmu\|=\|\bnu\|$ by Theorem \ref{thm:WW19} the structure constant $a_{\bla\bmu}^{\bnu}$ is independent of $n$  and hence it can be computed in $GL_m(q)$ or $GL_{m+1}(q)$ via  
   \begin{align}
   &a_{\bla\bmu}^{\bnu}=a_{\bla\bmu}^{\bnu}(m)=a_{\bla\bmu}^{\bnu}(m+1)\label{eq:a-number-1}
   \end{align}
   while 
   \begin{align}
   a_{\bla\bmu}^{\bnu}(m)=&\sharp\big\{(g,h)\big| g\thicksim J_{\bla^{\uparrow m}}, h\thicksim J_{\bmu^{\uparrow m}}, gh=J_{\bnu^{\uparrow m}}\big\},\label{eq:a-number-2}\\
   a_{\bla\bmu}^{\bnu}(m+1)=&\sharp\left\{(g',h') \big|g'\thicksim \begin{bmatrix}J_{\bla^{\uparrow m}}&0\\0&1\end{bmatrix}, h'\thicksim \begin{bmatrix}J_{\bmu^{\uparrow m}}&0\\0&1\end{bmatrix}, g'h'=\begin{bmatrix}J_{\bnu^{\uparrow m}}&0\\0&1\end{bmatrix}\right\}\notag\\
   =&\sharp\left\{(g'',h'') \big|g''\thicksim \begin{bmatrix}1&0\\0&J_{\bla^{\uparrow m}}\end{bmatrix}, h''\thicksim \begin{bmatrix}1&0\\0&J_{\bmu^{\uparrow m}}\end{bmatrix}, g''h''=\begin{bmatrix}1&0\\0&J_{\bnu^{\uparrow m}}\end{bmatrix}\right\}. \label{eq:a-number-3}
   \end{align}
  For simplicity, write $A=\begin{bmatrix}1&0\\0&J_{\bla^{\uparrow m}}\end{bmatrix}, B=\begin{bmatrix}1&0\\0&J_{\bmu^{\uparrow m}}\end{bmatrix}, C=\begin{bmatrix}1&0\\0&J_{\bnu^{\uparrow m}}\end{bmatrix}\in GA_{m+1}(q)$. Comparing the two sets on the right hand side of \eqref{eq:a-number-2} and \eqref{eq:a-number-3},  by \eqref{eq:a-number-1} we obtain that the pair $(g'',h'')$ satisfying $g''\thicksim A, h''\thicksim B, g''h''=C$ 
   must be of the form $g''=\begin{bmatrix}1&0\\0&g\end{bmatrix}, h''=\begin{bmatrix}1&0\\0&h\end{bmatrix}$ with $g\sim J_{\bla^{\uparrow m}}, h\sim J_{\bmu^{\uparrow m}}, gh=J_{\bnu^{\uparrow m}}$. This implies $g'',h''\in GA_{m+1}(q)$ and moreover the modified types of $g'', h''$ are $(\bla,0), (\bmu,0)$, respectively. That is, $g''\thicksim_{\mathsf{a}}A, h''\thicksim_{\mathsf{a}}B, g''h''=C$. Hence $(g'',h'')\in\mathfrak{T}_{AB}^C$ by \eqref{eq:mathfrakT}. This together with \eqref{eq:p-compute} and \eqref{eq:a-number-3} leads to 
   \begin{equation}\label{eq:a-number-4}
    a_{\bla\bmu}^{\bnu}(m+1)\leq \sharp\mathfrak{T}_{AB}^C= \mathfrak{p}_{AB}^C=\mathfrak{p}_{(\bla,0),(\bmu,0)}^{(\bnu,0)}(m+1)   
    \end{equation}
  Meanwhile 
  $$
  a_{\bla\bmu}^{\bnu}(m+1)=a^C_{AB},\quad \mathfrak{p}_{(\bla,0),(\bmu,0)}^{(\bnu,0)}(m+1)=\mathfrak{p}_{AB}^C
  $$
  and hence by \eqref{eq:p-compute}, \eqref{eq:p-a-compare} and \eqref{eq:two-modified-type} we obtain
  $$
  a_{\bla\bmu}^{\bnu}(m+1)\geq \mathfrak{p}_{(\bla,0),(\bmu,0)}^{(\bnu,0)}(m+1). 
  $$
  This together with \eqref{eq:a-number-4} leads to 
  \begin{equation}
   a_{\bla\bmu}^{\bnu}(m+1)=\mathfrak{p}_{(\bla,0),(\bmu,0)}^{(\bnu,0)}(m+1)
  \end{equation}
  Meanwhile by the assumption $\|\bla\|+\|\bmu\|=\|\bnu\|$, \eqref{eq:length-type} and Theorem \ref{thm:indep}, we have $\mathfrak{p}_{(\bla,0),(\bmu,0)}^{(\bnu,0)}=\mathfrak{p}_{(\bla,0),(\bmu,0)}^{(\bnu,0)}(m+1)$ and hence the proposition is proved. 
   \end{proof}
\begin{rem}\label{rem:equality}
Theorem \ref{thm:coincide} means that  \eqref{eq:p-a-compare} is actually an equality in the case $k=s=t=0$ and $\|\bnu\|=\|\bla\|+\|\bmu\|$. This  implies that the structure constant $a_{\bla\bmu}^{\bnu}$ in the stable center in the case of $GL_n(q)$ studied in \cite{WW19} is a special case of the structure constants in the stable center in the case of $GA_n(q)$. 
\end{rem}
\subsection{More examples in the stable center $\mathcal{G}_n(q)$.}   
\begin{lem} Let $\bla=(1)_{t-\xi}$ with $\xi\in\mathbb{F}_q\backslash\{0,1\}$. Then 
$$
 \mathfrak{p}_{(\bla,0),(\bempty,1)}^{(\bla,1)}=q. 
 $$
\end{lem}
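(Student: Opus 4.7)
Plan: By Theorem~\ref{thm:indep}, the structure constant $\mathfrak{p}_{(\bla,0),(\bempty,1)}^{(\bla,1)}(n)$ is independent of $n$, so I fix $n$ sufficiently large (any $n\geq 3$ will do) and work in $GA_n(q)$. Choose a concrete representative $C=\begin{bmatrix}1&0\\ \gamma&g_C\end{bmatrix}$ of modified type $(\bla,1)$, so that $g_C\in GL_{n-1}(q)$ has modified type $\bla=(1)_{t-\xi}$ (eigenvalues $\xi,1,\ldots,1$) and $\gamma\notin\operatorname{Im}(I_{n-1}-g_C)$. The structure constant counts pairs $(M,N)$ with $M\thicksim_{\mathsf{a}}A$, $N\thicksim_{\mathsf{a}}B$, $MN=C$, and the plan is to parametrize these by $N$ alone.

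The crucial observation is that by \eqref{eq:BAB-1} the conjugacy class of modified type $(\bempty,1)$ in $GA_n(q)$ consists \emph{precisely} of the pure translations $\bigl\{\begin{bmatrix}1&0\\ \alpha&I_{n-1}\end{bmatrix}:\alpha\in\mathbb{F}_q^{n-1}\setminus\{0\}\bigr\}$, since $g=I_{n-1}$ is preserved by any conjugation and the translation $\alpha$ can be sent to any nonzero vector via $\alpha\mapsto h\alpha$. Writing $N=\begin{bmatrix}1&0\\ \alpha&I_{n-1}\end{bmatrix}$ and computing directly, $M=CN^{-1}=\begin{bmatrix}1&0\\ \gamma-g_C\alpha&g_C\end{bmatrix}$. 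Its linear part is $g_C$, which already has modified type $\bla$.

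Hence the condition $M\thicksim_{\mathsf{a}}A$ of modified type $(\bla,0)$ collapses to the single affine-part condition $\gamma-g_C\alpha\in\operatorname{Im}(I_{n-1}-g_C)$. Since $g_C$ has eigenvalues $\xi$ and $1$ (with $\xi\neq 1$), $\operatorname{Im}(I_{n-1}-g_C)$ is the one-dimensional $\xi$-eigenspace of $g_C$, and from $(I-g_C)\alpha\in\operatorname{Im}(I-g_C)$ one has $g_C\alpha\equiv\alpha\pmod{\operatorname{Im}(I_{n-1}-g_C)}$; in other words $g_C$ acts as the identity on the quotient. The condition thus simplifies to $\alpha\equiv\gamma\pmod{\operatorname{Im}(I_{n-1}-g_C)}$, a single coset of cardinality $q$.

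The final point to verify is that $0$ does not lie in this coset: $0\in\gamma+\operatorname{Im}(I_{n-1}-g_C)$ would force $\gamma\in\operatorname{Im}(I_{n-1}-g_C)$, contradicting the defining property of modified type $(\bla,1)$ for $C$ (namely $k=1$, not $0$). Therefore all $q$ representatives of the coset give valid nonzero $\alpha$, and the structure constant is $q$. I do not anticipate any real obstacle; the conceptual step is the identification of the $(\bempty,1)$-class as pure translations, after which the problem reduces to a one-dimensional linear-algebra computation modulo $\operatorname{Im}(I_{n-1}-g_C)$.
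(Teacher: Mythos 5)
Your proposal is correct, and it takes a genuinely different route from the paper. The paper fixes $n=3$ (after invoking Theorem~\ref{thm:indep}), writes a generic matrix $M\in GA_3(q)$ with the explicit representatives $J_{(\bla,0)}$, $J_{(\bempty,1)}$, $J_{(\bla,1)}$, and solves the resulting trace/determinant/rank conditions by hand to find that $M$ must be $\begin{bmatrix} 1&0&0\\ b_1&\xi^{-1}&0\\ 0&0&1\end{bmatrix}$ with $b_1\in\Fq$ free, giving $q$ solutions. You instead identify the class of modified type $(\bempty,1)$ as exactly the nonzero translations (which is right: the linear part must have type $(1^{n-1})_{t-1}$, i.e.\ be $I_{n-1}$, and \eqref{eq:BAB-1} shows all nonzero translation vectors are conjugate), parametrize solution pairs by the translation $N$ via $M=CN^{-1}$, and observe that the linear part of $M$ is automatically $g_C$ of the correct type, so the only condition is $\gamma-g_C\alpha\in\operatorname{Im}(I_{n-1}-g_C)$, which reduces to $\alpha\in\gamma+\operatorname{Im}(I_{n-1}-g_C)$ --- a coset of size $q$ avoiding $0$ precisely because $C$ has $k=1$. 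All steps check out (the criterion ``$k=0$ iff the affine part lies in $\operatorname{Im}(I_{n-1}-g)$'' is exactly Proposition~\ref{prop:conjugate-aff}, and $\operatorname{rank}(I_{n-1}-g_C)=\ell(g_C)=\|\bla\|=1$). Your argument is coordinate-free, works uniformly for all $n\geq 3$, and in fact exhibits the $n$-independence of this particular constant directly rather than relying on Theorem~\ref{thm:indep}; the paper's computation is more mechanical but also serves as a template for its subsequent, less structured examples.
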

   \begin{proof}
Clearly $ \mathfrak{p}_{(\bla,0),(\bempty,1)}^{(\bla,1)}$ is exactly the  coefficient of $[\![C]\!]_{\mathsf{a}}$ in the product $[\![A]\!]_{\mathsf{a}} \cdot [\![B]\!]_{\mathsf{a}}$ in $GA_{3}(q)$ with   
    \begin{equation}\label{eq:ex-3-abc}
    A = \begin{bmatrix} 1 & 0 & 0 \\ 0 & \xi & 0 \\ 0 & 0 & 1 \end{bmatrix}, \quad 
    B = \begin{bmatrix} 1 & 0 & 0 \\ 0 & 1 & 0 \\ 1 & 0 & 1 \end{bmatrix}, \quad 
    C = \begin{bmatrix} 1 & 0 & 0 \\ 0 & \xi & 0 \\ 1 & 0 & 1 \end{bmatrix}.
    \end{equation}
     Equivalently,  let 
    $$
    \Gamma_3=\left\{M\in GA_3(q) \Bigg| M\thicksim_{\mathsf{a}} \begin{bmatrix} 1 & 0 & 0 \\ 0 & \xi^{-1} & 0 \\ 0 & 0 & 1 \end{bmatrix},\ M \begin{bmatrix} 1 & 0 & 0 \\ 0 & \xi & 0 \\ 1 & 0 & 1 \end{bmatrix} \thicksim_{\mathsf{a}} \begin{bmatrix} 1 & 0 & 0 \\ 0 & 1 & 0 \\ 1 & 0 & 1 \end{bmatrix}\right\}.
    $$
    Then  $ \mathfrak{p}_{(\bla,0),(\bempty,1)}^{(\bla,1)}=\sharp\Gamma_3$. 
    Suppose $M= \begin{bmatrix} 1 & 0 & 0 \\ b_1 & a_1 & a_2 \\ b_2 & a_3 & a_4 \end{bmatrix} \in\Gamma_3$. Then 
    \[
    M= \begin{bmatrix} 1 & 0 & 0 \\ b_1 & a_1 & a_2 \\ b_2 & a_3 & a_4 \end{bmatrix} \thicksim_{\mathsf{a}} 
    \begin{bmatrix} 1 & 0 & 0 \\ 0 & \xi^{-1} & 0 \\ 0 & 0 & 1 \end{bmatrix},\quad 
    MC = \begin{bmatrix} 1 & 0 & 0 \\ b_1+a_2 & a_1 \xi & a_2  \\ b_2+a_4 & a_3 \xi & a_4  \end{bmatrix} \thicksim_{\mathsf{a}} 
    \begin{bmatrix} 1 & 0 & 0 \\ 0 & 1 & 0 \\ 1 & 0 & 1 \end{bmatrix}.
    \]
   This together with \eqref{eq:BAB-1} and the proof of Proposition \ref{prop:conjugate-aff} leads to 
   $$
   \begin{bmatrix}
   a_1\xi&a_2\\ a_3\xi&a_4
   \end{bmatrix}= \begin{bmatrix}
   1&0\\ 0&1
   \end{bmatrix}, \,  \operatorname{rank}(M-I) = 1,\,  \operatorname{rank}(MC-I) = 1, \, b_1+a_2\neq 0 \text{ or } b_2+a_4\neq 0 . 
   $$  and hence 
    \[
    a_1=\xi^{-1}, \quad a_4 =  1, \quad a_2=a_3= b_2=0.
    \]
    Thus $M$ is of the form 
    \begin{equation}\label{eq:ex-3}
    M=\begin{bmatrix} 1 & 0 & 0 \\ b_1 & \xi^{-1} & 0 \\ 0 & 0 & 1 \end{bmatrix}. 
    \end{equation}
    Conversely, it is straightforward using Proposition \ref{prop:conjugate-aff} to check that the matrix of the form \eqref{eq:ex-3} belongs to $\Gamma_3$. Hence 
    $$
   \mathfrak{p}_{(\bla,0),(\emptyset,1)}^{(\bla,1)}=  \sharp \Gamma_3=  \sharp \left\{ M = \begin{bmatrix} 1 & 0 & 0 \\ b_1 & \xi^{-1} & 0 \\ 0 & 0 & 1 \end{bmatrix}\Bigg| b_1\in\Fq\right\} = q.
    $$
        
   \end{proof}

In general, we have the following. 
\begin{prop} \label{prop:ex-3}
Suppose $\xi_1, \xi_2, ..., \xi_r \in\mathbb{F}_q\backslash\{0,1\}$ and $\xi_i\neq \xi_j$ for $1\leq i\neq j\leq r$. 
Let $\bla=(1)_{t-\xi_1}\cup(1)_{t-\xi_2}\cup\cdots\cup(1)_{t-\xi_r}$. Then 
$$
 \mathfrak{p}_{(\bla,0),(\bempty,1)}^{(\bla,1)}=q^r. 
 $$
\end{prop}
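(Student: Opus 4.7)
The plan is to reduce to an explicit count in $GA_{r+2}(q)$, following closely the argument of the preceding lemma. Since $\|(\bla,0)\|+\|(\bempty,1)\|=r+1=\|(\bla,1)\|$, Theorem~\ref{thm:indep} ensures that $\mathfrak{p}^{(\bla,1)}_{(\bla,0),(\bempty,1)}(n)$ is independent of $n$, and all three class sums $P_{(\bla,0)}(n)$, $P_{(\bempty,1)}(n)$, $P_{(\bla,1)}(n)$ are simultaneously nonzero exactly when $n\geq r+2$. Working in $GA_{r+2}(q)$, I would choose representatives
$$
A=\diag(1,\xi_1,\ldots,\xi_r,1), \qquad B=\begin{bmatrix}1&0\\ e_{r+1}&I_{r+1}\end{bmatrix}, \qquad C=\begin{bmatrix}1&0\\ e_{r+1}&g\end{bmatrix},
$$
with $g=\diag(\xi_1,\ldots,\xi_r,1)$ and $e_{r+1}=(0,\ldots,0,1)^\intercal\in\Fq^{r+1}$. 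By Proposition~\ref{prop:conjugate-aff}, these lie in the conjugacy classes of modified types $(\bla,0)$, $(\bempty,1)$, $(\bla,1)$ respectively. Parametrizing pairs $(P,Q)$ with $PQ=C$ by $M=P^{-1}$ (so $Q=MC$) as in the preceding lemma, the structure constant equals $|\Gamma|$, where
$$
\Gamma=\bigl\{M\in GA_{r+2}(q) \bigm| M\thicksim_{\mathsf{a}} A^{-1},\ MC\thicksim_{\mathsf{a}} B\bigr\}.
$$

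Write $M=\begin{bmatrix}1&0\\ \vec b&G\end{bmatrix}$ with $\vec b\in\Fq^{r+1}$ and $G\in GL_{r+1}(q)$. The condition $MC\thicksim_{\mathsf{a}} B$ forces the linear part $Gg$ of $MC$ to be $GL_{r+1}(q)$-conjugate to $I_{r+1}$, hence $Gg=I_{r+1}$, and so $G=g^{-1}=\diag(\xi_1^{-1},\ldots,\xi_r^{-1},1)$. Next, $M\thicksim_{\mathsf{a}} A^{-1}$ requires $M$ to have modified type $(\bla',0)$, where $\bla'=(1)_{t-\xi_1^{-1}}\cup\cdots\cup(1)_{t-\xi_r^{-1}}$; in view of the first case in the proof of Proposition~\ref{prop:conjugate-aff} together with \eqref{eq:BAB-1}, this is equivalent, given $G=g^{-1}$, to $\vec b\in\mathrm{Im}(I_{r+1}-g^{-1})$. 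Since $I_{r+1}-g^{-1}=\diag(1-\xi_1^{-1},\ldots,1-\xi_r^{-1},0)$ with each $1-\xi_i^{-1}\in\Fq^*$, this image equals $\{(v_1,\ldots,v_r,0)^\intercal\mid v_i\in\Fq\}$, giving exactly $q^r$ admissible vectors $\vec b$. The remaining requirement coming from $MC\thicksim_{\mathsf{a}} B$, namely $\vec b+Ge_{r+1}=\vec b+e_{r+1}\neq 0$, is automatic, since the last coordinate of $\vec b+e_{r+1}$ equals $1$. A converse check using \eqref{eq:BAB-1} and Proposition~\ref{prop:conjugate-aff} confirms that every such $M$ lies in $\Gamma$, yielding $|\Gamma|=q^r$.

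The main technical point is the careful translation of the $GA_{r+2}(q)$-conjugacy conditions, which are strictly finer than the $GL_{r+2}(q)$-ones, into the concrete linear-algebraic constraints ``$G=g^{-1}$'' and ``$\vec b\in\mathrm{Im}(I_{r+1}-G)$''. This demands applying Proposition~\ref{prop:conjugate-aff} in both directions. It is also noteworthy that although the modified type of $A^{-1}$ may combinatorially differ from $(\bla,0)$ when $\{\xi_i\}\neq\{\xi_i^{-1}\}$, the resulting count depends only on $\dim\mathrm{Im}(I_{r+1}-g^{-1})=r$, so the final answer $q^r$ is symmetric in the data, consistent with the $n$-independence already guaranteed by Theorem~\ref{thm:indep}.
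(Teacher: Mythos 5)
Your proposal is correct and follows essentially the same route as the paper's own proof: the same representatives $A$, $B$, $C$ in $GA_{r+2}(q)$, the same reduction to counting the set $\Gamma=\{M \mid M\thicksim_{\mathsf{a}}A^{-1},\ MC\thicksim_{\mathsf{a}}B\}$, and the same conclusion that $M$ is forced to have linear part $g^{-1}$ with translation vector ranging over a $q^r$-element set. The only cosmetic difference is that you phrase the constraint on $\vec b$ via $\mathrm{Im}(I_{r+1}-g^{-1})$ where the paper uses equivalent rank conditions.
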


 \begin{proof}
Clearly $ \mathfrak{p}_{(\bla,0),(\emptyset,1)}^{(\bla,1)}$ is exactly the  coefficient of $[\![C]\!]_{\mathsf{a}}$ in the product $[\![A]\!]_{\mathsf{a}} \cdot [\![B]\!]_{\mathsf{a}}$ in $GA_{3}(q)$ with   
    \begin{equation}\label{eq:ex-3-abc}
  {\footnotesize  A = \begin{bmatrix} 1 & 0 & 0&\cdots&0&0 \\ 0 & \xi_1 & 0&\cdots&0&0 \\ 0 & 0 & \xi_2&\cdots&0&0\\ \vdots&\vdots&\vdots&\cdots&\vdots&\vdots\\ 0&0&0&\cdots&\xi_r&0\\0&0 &\cdots&0&0&1\end{bmatrix}, 
    B =  \begin{bmatrix} 1 & 0 & 0&\cdots&0&0 \\ 0 & 1 & 0&\cdots&0&0 \\ 0 & 0 & 1&\cdots&0&0\\ \vdots&\vdots&\vdots&\cdots&\vdots&\vdots\\ 0&0&0&\cdots&1&0\\1&0 &\cdots&0&0&1\end{bmatrix}, 
    C = \begin{bmatrix} 1 & 0 & 0&\cdots&0&0 \\ 0 & \xi_1 & 0&\cdots&0&0 \\ 0 & 0 & \xi_2&\cdots&0&0\\ \vdots&\vdots&\vdots&\cdots&\vdots&\vdots\\ 0&0&0&\cdots&\xi_r&0\\1&0 &\cdots&0&0&1\end{bmatrix}.}
    \end{equation}
     Equivalently,  let 
    $$
    \Gamma_4=\left\{M\in GA_{r+2}(q) \Bigg| M\thicksim_{\mathsf{a}} A^{-1},\ MC \thicksim_{\mathsf{a}} B\right\}.
    $$
    Then  $ \mathfrak{p}_{(\bla,0),(\emptyset,1)}^{(\bla,1)}=\sharp\Gamma_4$. 
    Suppose $$M=  \begin{bmatrix} 1 & 0 & 0&\cdots&0&0 \\ b_1 & a_{11} & a_{12}&\cdots&a_{1r}&a_{1,r+1} \\ b_2 & a_{21} & a_{22}&\cdots&a_{2r}&a_{2,r+1}\\ \vdots&\vdots&\vdots&\cdots&\vdots&\vdots\\ b_r & a_{r1} & a_{r2}&\cdots&a_{rr}&a_{r,r+1}\\b_{r+1} & a_{r+1,1} & a_{r+1,2}&\cdots&a_{r+1,r}&a_{r+1,r+1}\end{bmatrix} \in\Gamma_4.$$ Then 
    \[
    M  \sim_{\mathsf{a}} 
    A^{-1},\quad 
    MC  \sim_{\mathsf{a}} B. 
    \]
   and hence by \eqref{eq:BAB-1} and the proof of Proposition \ref{prop:conjugate-aff} we have 
   $$
  \begin{bmatrix}  a_{11}\xi_1 & a_{12}\xi_2&\cdots&a_{1r}\xi_r&a_{1,r+1} \\  a_{21}\xi_1 & a_{22}\xi_2&\cdots&a_{2r}\xi_r&a_{2,r+1}\\ \vdots&\vdots&\cdots&\vdots&\vdots\\ a_{r1} \xi_1& a_{r2}\xi_2&\cdots&a_{rr}\xi_r&a_{r,r+1}\\ a_{r+1,1}\xi_1 & a_{r+1,2}\xi_2&\cdots&a_{r+1,r}\xi_r&a_{r+1,r+1}\end{bmatrix}= I_{r+1}
   $$  and moreover
   $$
   \operatorname{rank}(M-I_{r+2}) = r,\,  \operatorname{rank}(MC-I_{r+2}) = 1 . 
   $$
    Thus $M$ is of the form 
    \begin{equation}\label{eq:ex-4}
    M= \begin{bmatrix} 1 & 0 & 0&\cdots&0&0 \\ b_1 & \xi^{-1}_1 & 0&\cdots&0&0 \\ b_2 & 0 & \xi^{-1}_2&\cdots&0&0\\ \vdots&\vdots&\vdots&\cdots&\vdots&\vdots\\ b_r&0&0&\cdots&\xi^{-1}_r&0\\0&0 &\cdots&0&0&1\end{bmatrix}. 
    \end{equation}
    Conversely, it is straightforward using Proposition \ref{prop:conjugate-aff} to check that the matrix of the form \eqref{eq:ex-4} belongs to $\Gamma_4$. Hence 
    $$
   \mathfrak{p}_{(\bla,0),(\bempty,1)}^{(\bla,1)}=  \sharp \Gamma_4=  \sharp \left\{ \begin{bmatrix} 1 & 0 & 0&\cdots&0&0 \\ b_1 & \xi^{-1}_1 & 0&\cdots&0&0 \\ b_2 & 0 & \xi^{-1}_2&\cdots&0&0\\ \vdots&\vdots&\vdots&\cdots&\vdots&\vdots\\ b_r&0&0&\cdots&\xi^{-1}_r&0\\0&0 &\cdots&0&0&1\end{bmatrix} \Bigg| b_i\in\Fq\right\} = q^r.
    $$
        
   \end{proof}
   \begin{rem}\label{rem:inequality}
  Different from Theorem \ref{thm:coincide},  by comparing with \cite[Proposition 4.5]{WW19} we observe that the computation in Proposition  \ref{prop:ex-3} gives an example of the inequality  \eqref{eq:p-a-compare}. 
   \end{rem}
   
   \subsection{Examples depending on $n$.}
\begin{prop} \label{prop:affine reflection}
Suppose $\xi \in\mathbb{F}_q\backslash\{0,1\}$. Let $\bla=(1)_{t-1}, \bmu=(1)_{t-\xi}, \bnu=(1)_{t-\xi^{-1}}$. Then for $n\geq 3$: 
\begin{enumerate}
\item $\mathfrak{p}^{(\bempty,1)}_{(\bla,0), (\bla,0)}(n)=q^{n-1}-q.$
\item $\mathfrak{p}^{(\bempty,1)}_{(\bmu,0), (\bnu,0)}(n)=q^{n-1}.$
\end{enumerate}
\end{prop}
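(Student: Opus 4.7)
The plan is to compute both structure constants by direct enumeration, using the explicit description of conjugacy classes from Proposition~\ref{prop:conjugate-aff}. Fix the representative $C=\begin{bmatrix} 1 & 0 \\ c & I_{n-1} \end{bmatrix}\in \mathscr{K}_{(\bempty,1)}(n)$ with $c\in\mathbb{F}_q^{n-1}\setminus\{0\}$ (this lies in $\mathscr{K}_{(\bempty,1)}(n)$ since $I_{n-1}$ has modified type $\bempty$ and $c\notin\text{Im}(I_{n-1}-I_{n-1})=\{0\}$). Then $\mathfrak{p}^{(\bempty,1)}_{(\bla,0),(\bla,0)}(n)$ equals the number of pairs $(P,Q)\in\mathscr{K}_{(\bla,0)}(n)\times\mathscr{K}_{(\bla,0)}(n)$ with $PQ=C$, and similarly for part~(2).

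By Proposition~\ref{prop:conjugate-aff}, elements of modified type $(\bla,0)$ in $GA_n(q)$ are precisely the matrices $\begin{bmatrix} 1 & 0 \\ \alpha & p \end{bmatrix}$ where $p\in GL_{n-1}(q)$ has modified type $\bla$ and $\alpha\in\text{Im}(I_{n-1}-p)$; the analogous descriptions hold for $(\bmu,0)$ and $(\bnu,0)$. Writing $P=\begin{bmatrix} 1 & 0 \\ \alpha & p \end{bmatrix}$ and $Q=\begin{bmatrix} 1 & 0 \\ \gamma & q \end{bmatrix}$, the equation $PQ=C$ reduces to $pq=I_{n-1}$ (so $q=p^{-1}$) and $\alpha+p\gamma=c$. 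The identity $q=p^{-1}$ automatically enforces the correct type on $Q$: in~(1) the inverse of a transvection is a transvection with the same image line, and in~(2) the inverse of a semisimple $\xi$-reflection is a semisimple $\xi^{-1}$-reflection, matching $\bnu$. The task therefore reduces to counting triples $(p,\alpha,\gamma)$ with $\alpha,\gamma\in\text{Im}(I_{n-1}-p)$ and $\alpha+p\gamma=c$.

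For~(1), $\bla=(1)_{t-1}$ corresponds to a transvection $p=I_{n-1}+vw^\intercal$ with $v\ne 0$ and $w^\intercal v=0$; here $\text{Im}(I_{n-1}-p)=\text{span}(v)$, and $p$ acts as the identity on this line. So $\alpha+p\gamma=\alpha+\gamma=c$, which forces $c\in\text{span}(v)$. Normalising $v=c$, the transvection $p$ is parametrised by nonzero $w\in\mathbb{F}_q^{n-1}$ with $c^\intercal w=0$ (a hyperplane minus $0$, hence $q^{n-2}-1$ choices); for each such $p$, $\alpha\in\text{span}(c)$ gives $q$ choices with $\gamma=c-\alpha$ forced, yielding $(q^{n-2}-1)q=q^{n-1}-q$.

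For~(2), $\bmu=(1)_{t-\xi}$ corresponds to a semisimple reflection $p$ acting as the scalar $\xi$ on the line $L:=\text{Im}(I_{n-1}-p)$ and as the identity on some complementary hyperplane $H\subset\mathbb{F}_q^{n-1}$. Then $\alpha+p\gamma=\alpha+\xi\gamma=c$ with $\alpha,\gamma\in L$ again forces $L=\text{span}(c)$. The number of hyperplanes complementary to $\text{span}(c)$ is $q^{n-2}$, parametrising the valid $p$; for each, $\alpha\in\text{span}(c)$ admits $q$ choices with $\gamma=\xi^{-1}(c-\alpha)$ forced, yielding $q^{n-2}\cdot q=q^{n-1}$. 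The only obstacle worth anticipating is the routine verification that each listed $(p,\alpha)$ genuinely represents an element of the asserted class, which reduces to checking the membership $\alpha\in\text{Im}(I_{n-1}-p)$ against Proposition~\ref{prop:conjugate-aff}; the explicit $n$-dependence then comes entirely from the free parameter ($w$ or $H$) that survives after the line $\text{span}(c)$ is pinned down, consistent with these pairs lying strictly below the stability threshold $\|(\bempty,1)\|=1<2=\|(\bla,0)\|+\|(\bla,0)\|$.
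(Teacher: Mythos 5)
Your proof is correct and follows the same high-level strategy as the paper: fix one representative $C=\begin{bmatrix}1&0\\ c&I_{n-1}\end{bmatrix}$ of $\mathscr{K}_{(\bempty,1)}(n)$ and count factorizations $PQ=C$ with $P,Q$ in the prescribed classes, using Proposition~\ref{prop:conjugate-aff} to characterize the classes with $k=0$ as the matrices $\begin{bmatrix}1&0\\ \alpha&p\end{bmatrix}$ with $\alpha\in\text{Im}(I_{n-1}-p)$. Where you diverge is in the execution: the paper parametrizes the unknown factor by its matrix entries and grinds through the rank conditions $\operatorname{rank}(M-I)=1$, $\operatorname{rank}(MC-I)=1$, etc., to pin down the entries, whereas you parametrize the solutions by the intrinsic geometric data of a reflection --- the image line $\text{Im}(I_{n-1}-p)$, which the equation $\alpha+p\gamma=c$ forces to equal $\operatorname{span}(c)$, and then the residual free parameter (the functional $w$ with $w^\intercal c=0$ in case (1), the complementary fixed hyperplane $H$ in case (2)). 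This is cleaner and makes the source of the $n$-dependence transparent; both routes land on $(q^{n-2}-1)q=q^{n-1}-q$ and $q^{n-2}\cdot q=q^{n-1}$. The one point you should state explicitly rather than fold into ``automatically'': the membership condition on $Q$ is $\gamma\in\text{Im}(I_{n-1}-p^{-1})$, not $\gamma\in\text{Im}(I_{n-1}-p)$; these coincide here because $I-p^{-1}=p^{-1}(p-I)$ and $\text{Im}(I_{n-1}-p)$ is $p$-invariant (it is the fixed line of the transvection, resp.\ the $\xi$-eigenline), but that identification is doing real work and deserves a line.
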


\begin{proof}
{\color{black}To prove (1), let $$g= \begin{bmatrix} 1&1&0&\cdots&0&0\\0&1&0&\cdots&0&0\\0&0&1&\cdots&0&0\\ \vdots&\vdots&\vdots&\cdots&\vdots&\vdots\\0&0&0&\cdots&1&0\\
0&0&0&\cdots&0&1\end{bmatrix}\in GL_{n-1}(q),\,
A= \begin{bmatrix} 1&0\\0&g\end{bmatrix},\, C= \begin{bmatrix} 1&0\\\alpha&I_{n-1}\end{bmatrix},\, \alpha=\begin{bmatrix}1\\0\\ \vdots\\0 \end{bmatrix}\in\Fq^{n-1}. $$
Then $A\in GA_n(q)$ is of modified type $(\bla,0)$ with $A^{-1}=\begin{bmatrix}1&0\\ 0&g^{-1}\end{bmatrix}$ and $C\in GA_n(q)$ is of modified type $(\bempty,1)$. Analogous to the proof of Lemma  \ref{lem:ex-1}, we obtain $\mathfrak{p}^{(\bempty,1)}_{(\bla,0), (\bla,0)}(n)=\sharp\Gamma_5$ with 
$$\Gamma_5=\left\{M \Big| M\thicksim_{\mathsf{a}}A^{-1}, MC\thicksim_{\mathsf{a}} A\right\}.$$
Suppose $$ M= \begin{bmatrix} 1&0\\\gamma&f \end{bmatrix}\in \Gamma_5, 
 \text{ where }\gamma=\begin{bmatrix} r_1\\ r_2\\ \vdots\\r_{n-1} \end{bmatrix},\quad f=\begin{bmatrix}f_{11}&f_{12}&\cdots&f_{1,n-1}\\f_{21}&f_{22}&\cdots& f_{2,n-1}\\ \vdots&\vdots&\cdots&\vdots\\ f_{n-1,1}&f_{n-1,2}&\cdots&f_{n-1,n-1}\end{bmatrix}. $$
 Then by the proof of Proposition \ref{prop:conjugate-aff} we obtain that $M\thicksim_{\mathsf{a}}A^{-1}$ if and olny if 
 \begin{equation}\label{eq:ex-5}
 f\thicksim g^{-1} \in GL_{n-1}(q) \text{ and }
 \gamma=(I_{n-1}-f)\beta,\text{ for some }\beta\in\Fq^{n-1}. 
 \end{equation}
 Meanwhile $MC=\begin{bmatrix}1&0\\ \gamma+f\alpha&f \end{bmatrix}\thicksim_{\mathsf{a}}A$ if and olny if 
   \begin{equation}\label{eq:ex-5-1}
 f\thicksim g \in GL_{n-1}(q) \text{ and }
 \gamma+f\alpha=(I_{n-1}-f)\delta,\text{ for some }\delta\in\Fq^{n-1}. 
 \end{equation}
Then we have 
\begin{equation}\label{eq:ex-5-rank1}
rank\begin{bmatrix}
  f_{11}-1&f_{12}&\cdots&f_{1,n-1}\\
  f_{21}&f_{22}-1&\cdots&f_{2,n-1}\\
  \vdots&\vdots&\ddots&\vdots\\
  f_{n1}&f_{n2}&\cdots&f_{n-1,n-1}-1
\end{bmatrix}=1,
\end{equation}
\begin{equation}\label{eq:ex-5-rank2}
rank\begin{bmatrix}
  0&0&0&\cdots&0\\
  r_1&f_{11}-1&f_{12}&\cdots&f_{1n}\\
  r_2&f_{21}&f_{22}-1&\cdots&f_{2n}\\
  \vdots&\vdots&\vdots&\ddots&\vdots\\
  r_{n-1}&f_{n1}&f_{n2}&\cdots&f_{n-1,n-1}-1
\end{bmatrix}=1,
\end{equation}
and 
\begin{equation}\label{eq:ex-5-rank3}
rank\begin{bmatrix}
  0&0&0&\cdots&0\\
  r_1+f_{11}&f_{11}-1&f_{12}&\cdots&f_{1,n-1}\\
  r_2+f_{21}&f_{21}&f_{22}-1&\cdots&f_{2,n-1}\\
  \vdots&\vdots&\vdots&\ddots&\vdots\\
  r_{n-1}+f_{n-1,1}&f_{n1}&f_{n2}&\cdots&f_{n-1,n-1}-1
\end{bmatrix}=1
\end{equation}
By \eqref{eq:ex-5-rank2} and \eqref{eq:ex-5-rank3}, a straightforward calculation gives rise to 
$$
f_{11}=f_{22}=\cdots=f_{n-1,n-1}=1,\quad f_{ij}=0 \text{ for } i\geq 2, i\neq j. 
$$
This together with \eqref{eq:ex-5} leads to $r_2=r_3=\cdots=r_{n-1}=0$. Therefore $M\in\Gamma_5$ must be of the form 
\begin{equation}\label{eq:ex-5-2}
M=\begin{bmatrix} 
  1&0&0&\cdots&0\\
  r_1&1&f_{12}&\cdots&f_{1,n-1}\\
  0&0&1&\cdots&0\\
  \vdots&\vdots&\vdots&\cdots&\vdots\\
  0&0&0&\cdots&1 \end{bmatrix}
\end{equation}
and moreover  one of $f_{12}, f_{13},\ldots, f_{1,n-1}$ is nonzero by \eqref{eq:ex-5-rank1}. 
Conversely, clearly a matrix of the form \eqref{eq:ex-5-2} satisfies \eqref{eq:ex-5} and \eqref{eq:ex-5-1} and hence it belongs to $\Gamma_5$. Therefore we obtain 
\begin{align*}
\mathfrak{p}^{(\bempty,1)}_{(\bla,0), (\bla,0)}(n)&=\sharp\Gamma_5\\
&=\sharp\left\{\begin{bmatrix} 
  1&0&0&\cdots&0\\
  r_1&1&f_{12}&\cdots&f_{1,n-1}\\
  0&0&1&\cdots&0\\
  \vdots&\vdots&\vdots&\cdots&\vdots\\
  0&0&0&\cdots&1 \end{bmatrix}\in GA_n(q)\Bigg|  f_{1k}\neq 0\text{ for some }2\leq k\leq n-1\right\}\\
  &=q^{n-1}-q. 
\end{align*}
}

Next, we shall  compute $\mathfrak{p}^{(\bempty,1)}_{(\bmu,0), (\bnu,0)}(n)$. Similarly, one can deduce that 
$$
\mathfrak{p}^{(\bempty,1)}_{(\bmu,0), (\bnu,0)}(n)=\sharp\Gamma_6
$$
with 
$$
\Gamma_6=\left\{M \Bigg| M\thicksim_{\mathsf{a}} \begin{bmatrix} 1&0&0\\0&\xi^{-1}&0\\0&0&I_{n-2} \end{bmatrix}\thicksim_{\mathsf{a}}M 
\begin{bmatrix} 1 & 0\\ \alpha & I_{n-2}\end{bmatrix}, \alpha=\begin{bmatrix}1&0& \cdots&0 \end{bmatrix}^\intercal \right\}
$$
Analogous to the calculation in part (1), one can obtain that a matrix $ M= \begin{bmatrix} 1&0\\\gamma&f \end{bmatrix}\in GA_n(q)$ with $ 
\gamma=\begin{bmatrix} r_1\\ r_2\\ \vdots\\r_{n-1} \end{bmatrix},\quad f=\begin{bmatrix}f_{11}&f_{12}&\cdots&f_{1,n-1}\\f_{21}&f_{22}&\cdots& f_{2,n-1}\\ \vdots&\vdots&\cdots&\vdots\\ f_{n-1,1}&f_{n-1,2}&\cdots&f_{n-1,n-1}\end{bmatrix}$ belongs to $\Gamma_6$ if only if $M$ is of the form 
 $$
M= \begin{bmatrix}
    1&0&0&0&\cdots&0\\
    r_1&\xi^{-1}&f_{12}&f_{13}&\cdots&f_{1,n-1}\\
    0&0&1&0&\cdots&0\\
    0&0&0&1&\cdots&0\\
    \vdots&\vdots&\vdots&\vdots&\ddots&\vdots\\
    0&0&0&0&\cdots&1\\
  \end{bmatrix}
 $$
 \begin{align*}
\mathfrak{p}^{(\bempty,1)}_{(\bmu,0), (\bnu,0)}(n)&=\sharp\Gamma_6=\sharp\left\{\begin{bmatrix} 
  1&0&0&\cdots&0\\
  r_1&\xi^{-1}&f_{12}&\cdots&f_{1,n-1}\\
  0&0&1&\cdots&0\\
  \vdots&\vdots&\vdots&\cdots&\vdots\\
  0&0&0&\cdots&1 \end{bmatrix}\Bigg|r_1,f_{1i}\in\Fq, 2\leq i\leq n-1\right\}\\
  &=q^{n-1}. 
\end{align*}
\end{proof}
\begin{rem}\label{rem:affine reflection}
It is known from \cite{DL18}  that the general affine $GA_n(q)$ also admits another set of generators consisting of the so-called affine reflections, which gives rise to a new length function denoted by $\ell\ell^{\mathsf{a}}(A)$ for each $A\in GA_n(q)$. This will define a new filtration in the center $\mathcal{A}_n(q)$ of $\mathbb{Z}[GA_n(q)]$ and hence we have a new graded algebra denoted by  $\mathcal{G}_n'(q)$.  In \cite{DL18}, three types of elements known as {\em elliptic}, {\em parabolic} and {\em hyperbolic} are introduced. Actually one can show  that $\ell\ell^{\mathsf{a}}(A)=\ell^{\mathsf{a}}(A)$ if $A$ is {elliptic} or  {parabolic} 
while $\ell\ell^{\mathsf{a}}(A)=\ell^{\mathsf{a}}(A)+1$ if $A$ is hyperbolic by \cite[Theorem 11]{DL18} and Lemma \ref{lem:equal-length} . Moreover, in our notation $A$ is hyperbolic if and only if $A$ is of modified type $(\bempty,1)$ by \cite[Definition 10]{DL18}. 
The computation in Proposition \ref{prop:affine reflection} shows that  if $A,B,C\in GA_n(q)$ are of the modified type $(\bla,0), (\bla,0), (\bempty,1)$ with $\bla=(1)_{t-1}$, respectively, then the coefficient of $[\![C]\!]_{\mathsf{a}}$ in the product $[\![A]\!]_{\mathsf{a}} \cdot [\![B]\!]_{\mathsf{a}}$ 
is strictly smaller than the coefficient of $[\![C']\!]_{\mathsf{a}}$ in the product $[\![A']\!]_{\mathsf{a}} \cdot [\![B']\!]_{\mathsf{a}}$, where $A'=\begin{bmatrix}A&0\\0&1\end{bmatrix}$, 
$B'=\begin{bmatrix}B&0\\0&1\end{bmatrix}$ and $C'=\begin{bmatrix}C&0\\0&1\end{bmatrix}\in GA_{n+1}(q)$ even though $\ell\ell^{\mathsf{a}}(A)+\ell\ell^{\mathsf{a}}(B)=\ell\ell^{\mathsf{a}}(C)$. This means the structure constants in the graded algebra $\mathcal{G}_n'(q)$ associated to the new filtration in $\mathcal{A}_n(q)$ with respect to the new length function $\ell\ell^{\mathsf{a}}(A)$ does not admit the stability property. 
\end{rem}

\begin{rem}
There are several further directions and problems arising from our work which may be worth pursuing.

{\em Question 1}: It is interesting to compare the structure constants $\mathfrak{p}_{(\bla,k),(\bmu,s)}^{(\bnu,t)}(n)$ and $a_{\bla\bmu}^{\bnu}(n)$. As mentioned earlier in Remark \ref{rem:equality} and Remark \ref{rem:inequality}, we ask for when they coincide besides the situation in Theorem \ref{thm:coincide}.

{\em Question 2}: Due to \cite{Ze81},  the following series 
\begin{equation}\label{GL-GA-series}
\cdots \subset GA_n(q)\subset GL_n(q)\subset GA_{n+1}(q)\subset GL_{n+1}(q)\subset\cdots
\end{equation}
satisfies the multiplicity-free property concerning the irreducible representations over the field of complex numbers. Then analogous to \cite[Section 1.2]{Kl} it is possible to define the notion of Jucys-Murphy elements by studying the centralizers of $GA_n(q)$ in the group algebra of $\mathbb{Z}[GL_n(q)]$ as well as the centralizers of $GL_n(q)$ in the group algebra of $\mathbb{Z}[GA_{n+1}(q)]$ with respect to the embeddings in \eqref{GL-GA-series}.  These potential elements is expected to be used to obtain a set of generators for the graded algebra $\mathcal{G}_n(q)$ as an analog of \cite[Theorem 1]{FH59}. 

{\em Question 3}: As mentioned in Remark \ref{rem:affine reflection} the structure constants in the graded algebra $\mathcal{G}_n'(q)$ associated to the new filtration in center $\mathcal{A}_n(q)$ with respect to the new length function $\ell\ell^{\mathsf{a}}(A)$ does not have the similar stability phenomena. However, it is still worthwhile to ask which structure constants satisfy the stability property.

{\em Question 4}: We expect the general phenomenon in section \ref{subsect:general} can be applied to obtain the similar stability phenomena for various families of subgroups of $\GL$ such as unitary, symplectic, or orthogonal groups.  In fact, one can apply the key observation in section \ref{subsect:general} to the symplectic group $Sp_{2n}(q)$ to recover the main result in \cite{O21} without the lengthy computation on the centralizers. On the other hand, the symplectic group $Sp_{2n}(q)$ is generated by the set of transvections \cite{Om78} and hence the center of the group algebra $\mathbb{Z}[Sp_{2n}(q)]$ admits the new filtered structure different from the one introduced in \cite{O21}. It is worthwhile to study the structure constants of the associated graded algebra and to see whether the stability property holds in this case. We may come to this project in a coming  article.

{\em Question 5}: We ask whether the graded algebra $\mathcal{G}_n(q)$ in this paper afford a geometric interpretation and generalization similar to the case of wreath product $\Gamma\wr S_n$ for a subgroup $\Gamma$ of $SL_2(\C)$. It is known \cite{W04}  that the associated graded of the center of the complex group algebra of $\Gamma\wr S_n$  is isomorphic to the cohomology ring of Hilbert scheme of $n$ points on the minimal resolution $\widetilde{\C^2/\Gamma}$ (also cf. \cite{LS01, Va01, LQW04}). It can also be regarded as the Chen-Ruan orbifold cohomology ring of the orbifold $\C^{2n}\big/\Gamma\wr S_n$.

{\em Question 6}: 
Regarding the structure constants $\mathfrak{p}_{(\bla,k),(\bmu,s)}^{(\bnu,t)}(n)$ in \eqref{eq:a(n)} for the center $\mathcal{A}_n(q)$ with $ (\bla,k),(\bmu,s),(\bnu,t)\in\widehat{\mathscr{P}}^{\mathsf{a}}(\Phi_q)$, we ask for the dependency on $q$ and $n$ in general cases. We expect the work \cite{KR21} can be generalized to this case.

\end{rem}

}
%
%


\end{document}